\newcommand{\R}{\mathbb{R}}
\newcommand{\bn}{\mathbf n}
\newcommand{\grad}{\nabla}
\newcommand{\bxi}{\boldsymbol{\xi}}
\theoremstyle{plain}
\newtheorem{theorem}{Theorem}[section]
\newtheorem{lemma}[theorem]{Lemma}
\newtheorem{proposition}[theorem]{Proposition}
\newtheorem{condition}[theorem]{Condition}
\newtheorem{corollary}[theorem]{Corollary}
\newtheorem{definition}{Definition}
\theoremstyle{remark}
\newtheorem{remark}{Remark}[section]
\numberwithin{equation}{section} \numberwithin{theorem}{section}
\numberwithin{remark}{section} \linespread{1.07}
\DeclareMathOperator{\diverg}{div}
\DeclareMathOperator{\N}{\mathbb{N}}
\newcommand{\abs}[1]{|{#1}|}
\newcommand{\norm}[1]{\|#1\|}
\newcommand{\eps}{\epsilon}
\newcommand{\meantmp}[2]{#1\langle{#2}#1\rangle}
\newcommand{\mean}[1]{\meantmp{}{#1}}
\providecommand{\skptmp}[3]{{\ensuremath{#1\langle {#2}, {#3} #1\rangle}}}
\providecommand{\skp}[2]{\skptmp{}{#1}{#2}}
\newcommand{\flow}{\mathbf{b}}
\newcommand{\nrml}{\mathbf{n}}
\newcommand{\pdb}{\partial_{\flow}}
\newcommand{\pdn}{\partial_{\nrml}}
\newcommand{\inttime}{\int_0^T}
\newcommand{\intwave}{\int_{\Omega_W}}
\newcommand{\intbndwave}{\int_{\partial{\Omega_W}}}
\newcommand{\intgamm}{\int_{\Gamma}}
\newcommand{\exx}{\mathbf{e}}
\newcommand{\ArcCos}{\mathrm{arc\hspace{0.3mm}cos}}
\newcommand{\typo}[2]{#2}
\title{Time-Periodic Solutions for Hyperbolic-Parabolic Systems}
\author{\small  Stanislav Mosn\'y$^\dag$ \hskip 1cm Boris Muha\footnote{University of Zagreb, Faculty of Science, Department of Mathematics, Croatia;~ {\em borism@math.hr}}  \hskip 1cm Sebastian Schwarzacher \footnote{University of Uppsala, Sweden; {\em stanislav.mosny@math.uu.se} and {\em sebastian.schwarzacher@math.uu.se}}  \footnote{Charles University, Prague, Czechia; {\em schwarz@karlin.mff.cuni.cz}} \hskip 1cm Justin T. Webster\footnote{University of Maryland, Baltimore County, 1000 Hilltop Circle, Baltimore, MD, 21250;~ {\em websterj@umbc.edu}} }
\begin{document}

\maketitle
\begin{abstract}
\noindent Time-periodic weak solutions for a coupled hyperbolic-parabolic system are obtained. A linear heat and wave equation are considered on two respective $d$-dimensional spatial domains that share a common $(d-1)$-dimensional interface, $\Gamma$. The system is only partially damped, leading to an indeterminate case for existing theory. We construct periodic solutions by obtaining novel a priori estimates for the coupled system, reconstructing the total energy via the interface $\Gamma$. As a byproduct, geometric constraints manifest on the wave domain which are reminiscent of classical boundary control conditions for wave stabilizability. We note a  ``loss" of regularity between the forcing and  solution which is  greater than that associated with the heat-wave Cauchy problem. However, we consider a broader class of spatial domains and mitigate this regularity loss by trading time and space differentiations, a feature unique to the periodic setting. This seems to be the first constructive result addressing existence and uniqueness of periodic solutions in the heat-wave context, where no dissipation is present in the wave interior. Our results speak to the open problem of the (non-)emergence of resonance in complex systems, and are readily generalizable to related systems and certain nonlinear cases. 

\noindent 
\noindent 
\vskip.2cm

\noindent {\em Keywords}: {periodic solutions, hyperbolic-parabolic coupling, resonance, semigroup stability, observability, geometric control condition}
\vskip.2cm
\noindent
{\em 2010 AMS}: 35B10, 35Q93, 74F10, 35M10
\vskip.2cm
\noindent {\em Acknowledged Support}: Authors 1 and 3 were supported by the VR-Grant 2022-03862 by the Swedish Research Council. Author 3 was supported by the ERC-CZ grant LL2105 of the Czech ministry of education, youth and sports and by the Charles University Research Centre program No. UNCE/24/SCI/005. Author 3 is a member of the Ne\v{c}as Centre for Mathematical Modeling. Author 2 was supported by the Croatian Science Foundation project IP-2022-10-2962. Author 4 was partially supported by NSF-DMS 2307538 and UMBC's Strategic Award for Research Transitions (START).
\end{abstract}

\maketitle

\section{Introduction}
In the context of evolutionary partial differential equations (PDEs), the study of time-periodic solutions has a long and robust history \cite{Vejvoda81}. We can mention some modern treatments (and references therein) for hyperbolic \cite{celik,coron,gaz1} and parabolic \cite{lunardi,casanova,fw2} systems in both linear and nonlinear cases. Motivated by many applications in physics and mechanics, the general aim for this class of problems is to determine under what circumstances a time-periodic input (body force, boundary condition, coefficient, etc.) will yield a time-periodic response (PDE solution). Moreover, one wishes to determine under which conditions the possibility of {\em resonance}---an unbounded solution corresponding to a bounded input---can be excluded \cite{gaz1,gaz2,g2}. These problems are {\em highly challenging}, even in the linear case, and {\em presently open} \cite{galdi}. 
In this treatment, our goal is to resolve the issue of existence and uniqueness of periodic solutions for a certain coupled system which is {\em partially damped} \cite{ZZ1,galdi}. To the knowledge of the authors, this is a heretofore open problem, and not covered by extant literature. Moreover, we resolve this difficult open problem in a constructive fashion, explicitly providing PDE estimates that lead to periodic well-posedness in the finite-energy setting. 

We consider a bounded  domain $\Omega$ with $\overline{\Omega} = \overline{\Omega_W} \cup \overline{\Omega_H} \subset \mathbb R^d$, such that the two subdomains {$\Omega_H$ and $\Omega_W$ are each connected, and share a  lower-dimensional interface $\Gamma = \partial \Omega_W \cap \partial \Omega_H$}.  
The central variables are $u$, which satisfies a parabolic (heat-type) partial differential equation (PDE), and $w$, satisfying an undamped hyperbolic (wave-type) PDE. The body forces, $f, g$ are assumed to have point-wise almost-everywhere values in time on $[0,T]$, with $T>0$ arbitrary but fixed; if $f$ or $g$ ({and/or their derivatives}) are continuous in time on $\mathbb R$, then they are assumed to be $T$-periodic. As we are interested in periodic solutions for the system, we will require that both $u$ and $w$---and their derivatives, when defined---are time-periodic over the interval $[0,T]$.
\begin{align}\label{sys1}
	\partial_t^2{w} + L w &= g\text{ on }[0,T]\times \Omega_W,\quad w(0)=w(T), ~w_t(0)=w_t(T);
	\\
	\partial_t{u} + A u &= f\text{ on }[0,T]\times \Omega_H,\quad u(0)=u(T).
\end{align}

The operators $L$ and $A$ above, representing the hyperbolic and parabolic components, respectively, of the dynamics, can be taken to be quite general. However, we focus on the simplest case for the sake of exposition, noting that the problem is open even at this simplified, linear level. Thus we take $L=A=-\Delta$ for the remainder of the analysis here. We remark below in Section \ref{generalize} how  $L$ and $A$ can be generalized. For coupling across the interface $\Gamma$, we choose conditions motivated by fluid-structure interactions, as in \cite{ZZ1,ZZ2,trig1}. With our choices up to this point, we obtain certain Dirichlet and Neumann interface conditions:
\begin{align}\label{coupling}
	\partial_t{w}=u ~\text{ and } ~ \partial_{\bn} w=\partial_{\bn} u ~\text{ on }~[0,T]\times \Gamma,
\end{align}
where $\bn$ is chosen to be the outer normal of the domain $\Omega_W$. Away from $\Gamma$, we will supplement the system with homogeneous Dirichlet-type boundary conditions. 

\subsection{Periodic Problems and Associated Challenges}
Periodic problems have a different structure than traditional Cauchy problems for evolutionary systems. Indeed, Cauchy problems typically seek Hadamard well-posedness through the propagation of initial  (and perhaps other) data. In contrast, {\em the initial state is an unknown in periodic problems}. One is given only a forcing  $F_T$ associated to a period $T>0$, and the goal is to construct a solution $y(\cdot)$ which is somehow continuous in time and satisfies $y(0)=y(T)$ (in some functional sense). Several interesting outcomes are a priori possible:

\begin{itemize}
\setlength\itemsep{.03cm}
\item For each $F_T$ in a given functional class, there exists a unique solution $y$ of period $T$ (and hence the initial condition $y(0)$ is uniquely determined). 
\item For some $F_T$ in a given functional class, there exists multiple solutions of period $T$ (and hence the initial conditions associated to periodic solutions are not uniquely determined).
\item For some $F_T$ in a given functional class, there does not exist a periodic solution for any initial condition $y(0)$ coming from the designated class.
\end{itemize}
\begin{remark}We acknowledge but do not explicitly address the interesting question of when a forcing $F_T$ could generate a periodic solution of (minimal) period $ T^* \neq T$. \end{remark}

The last bullet point above is particularly important, since it encompasses the circumstance of {\em resonance}. For a resonant system, there exists particular frequencies and associated {\em bounded} forcing functions for which the response (solution) grows unboundedly\footnote{the words ``bounded" and ``unbounded" here can refer to pointwise information, or some prescribed norm}, and hence is not periodic. A simple example can be seen via an undamped harmonic oscillator with a periodic forcing taken at the natural frequency (or other integer multiples \cite{isaac})---see the nice modern treatment in \cite{playing}. As the seminal reference, \cite{galdi}, points out, determining conditions for which resonance occurs in complex systems, or, relatedly, when periodic solutions exist and are unique, is interesting across a broad range of applications and also an {\em  exceptionally challenging} mathematical problem. 

Another interesting aspect contrasting Cauchy and periodic problems is that of regularity. In periodic problems, without prescribed initial conditions from which to propagate, regularity of the solution is at issue. In particular, the regularity gap between data and the solution may be greater than what is expected from typical hyperbolic or parabolic Cauchy problems. As we shall see, to obtain finite energy periodic solutions, we will need smoother-in-time forcing data. We  provide examples in Section \ref{exams} which explicitly illustrate this gap.

Two cases for which an abundant periodic literature exists are  hyperbolic evolutions (no dissipation present) and  parabolic evolutions (fundamentally dissipative). For parabolic systems, the periodic theory is well-established, including for nonlinear dynamics \cite{casanova,bostan,lunardi}. The essential idea is to utilize dissipation in the system to produce a priori estimates from which fixed point methods can be utilized---see the discussion in the next section.  This includes some dynamics for which nonlinear fluids and fluid-structure interactions are involved, but we are quick to note that such works utilize ad hoc methods or have strong notions of dissipation present in the dynamics \cite{srd,giusy,sebastian,fw2,fn1,fn2,g1,gs}. For hyperbolic systems \cite{brezis,breziscoron,cesari,coron,rab,fw1,fw3,gaz1}, existence and uniqueness of periodic solutions is a demanding endeavor for at least two reasons: first, kernels associated to wave operators are, in some sense, large, and can give rise to non-uniqueness or non-existence (resonance in the simple harmonic oscillator); secondly, in a conservative hyperbolic system, the lack of dissipation provides no useful a priori estimates. 

\subsection{Energy Inequality and Approach to Construction}
To illustrate the previous discussion, suppose now that $E(t)$ is a positive energy functional on a space $X$ corresponding to an evolutionary system on $t\in [0,T]$, with forcing provided by $F \in L^2(0,T; X)$. Suppose $D(t)$ represents a dissipation functional, which tracks a positive, dissipated quantity as time advances, with the system obeying the  energy inequality
$$E(t) +\int_0^t D(\tau)d\tau  \lesssim E(0)+||F||_{L^2(0,T;X)}^2.$$ 
We can make two observations, assuming that the solution is $T$-periodic. 
First, since the initial state $y(0)$ is an unknown in this context, the associated value $E(0)$ is also unknown; hence, the inequality carries no useable information---for a construction---at time $t$. Secondly, if we let $t=T$ in order to invoke the periodicity of the solution, we observe that $E(T)=E(0)$ (since $y(T)=y(0)$, and associated derivatives agree). Then the energy inequality reduces to
$$\int_0^T D (\tau)d\tau \lesssim ||F||_{L^2(0,T;X)}^2.$$ So long as the dissipation in the problem provides enough information to ``reconstruct" appropriate norms of the solution (perhaps after repeated differentiations of the system), then a priori estimates can be obtained that permit a solution construction. This is akin to the notion of {\em observability} in classical control theory for PDE systems \cite{ZZ1,redbook}.

The  previous two situations suggest that the role of dissipation is central to  existence and uniqueness of periodic solutions. Indeed, this is a key point in  the key reference \cite{galdi}. There,  connections between asymptotic stability and periodicity are expounded. Informally:  a linear system, forced by an $L^2$ space-time function, demonstrates existence of a unique periodic solution whenever the underlying solution semigroup exhibits uniform stability (associated to the unforced dynamics). This says that the dissipation in the problem---which provides the uniform stability---is ``strong enough" to reconstruct the state space energy. If no stability of the semigroup is observed (or some weaker notion of stability) then a unique periodic solution may not necessarily follow. We elaborate upon other results from \cite{galdi} in Section \ref{results}.

This brings us to periodic solutions for the coupled hyperbolic-parabolic system presented above. In this partially-damped framework, it is not immediately clear which aspects of the dynamics are most important from the point of view of periodicity. Two natural questions emerge: {\em Is the dissipation in the parabolic component sufficient to ``control" the hyperbolic component?} And, secondly,   as resonance (non-existence) is possible for the hyperbolic component, {\em can resonant solutions persist even when the hyperbolic dynamics is coupled with a parabolic dynamics?} This paper answers those questions directly, in a manner which has not previously appeared in the literature: we construct a periodic solution explicitly from novel PDE estimates that demonstrate the ``propagation" of dissipation from the parabolic component to the hyperbolic component via the interface. This demonstrates a remarkable connection between periodic problems and the boundary control of hyperbolic dynamics.

	\subsection{Precise Mathematical Question} \label{athand}
We consider the bounded, { connected}, Lipschitz domains $\Omega_W$, $\Omega_H$, $\Omega\subset \R^d$ such that $\Omega_W\cap\Omega_H=\emptyset$ and  $\overline{\Omega} =\overline{\Omega_W \cup \Omega_H}$. The domains admit the interface $\Gamma = \partial \Omega_W \cap \partial \Omega_H$,  with $\Gamma$ assumed to be of class $\mathcal C^1$.  We will denote the outer (inactive) boundary of $\Omega =\Omega_H\cup \Omega_W$ by $\Gamma_0=\partial (\Omega_H \cup \Omega_W) \setminus \Gamma \equiv \Gamma_{H}\cup \Gamma_{W}$, where the heat and wave variables satisfy homogeneous Dirichlet boundary conditions.

\begin{figure}[htbp]
	\centering
	\begin{minipage}[b]{0.4\textwidth}
		\centering
		\resizebox{!}{4cm}{%
			\tikzset{every picture/.style={line width=0.75pt}} 

\begin{tikzpicture}[x=0.75pt,y=0.75pt,yscale=-1,xscale=1]

\draw  [fill={rgb, 255:red, 0; green, 0; blue, 0 }  ,fill opacity=0.1 ] (391.2,187.3) .. controls (388.2,198.3) and (419.2,199.3) .. (463.2,185.3) .. controls (507.2,171.3) and (539.2,196.3) .. (561.2,197.3) .. controls (583.2,198.3) and (592.2,198.3) .. (596.2,187.3) .. controls (600.2,176.3) and (539.2,159.3) .. (480.2,161.3) .. controls (421.2,163.3) and (394.2,176.3) .. (391.2,187.3) -- cycle ;
\draw    (391.2,187.3) .. controls (385.69,207.23) and (403.69,212.43) .. (398.89,229.63) .. controls (394.09,246.83) and (386.35,266.04) .. (427.69,272.43) .. controls (469.04,278.82) and (495.54,246.82) .. (515.79,245.57) .. controls (536.04,244.32) and (549.54,250.57) .. (566.04,256.82) .. controls (582.54,263.07) and (598.54,246.82) .. (592.49,231.63) .. controls (586.45,216.44) and (605.33,194.55) .. (595.08,183.3) ;
\draw    (393.13,192.5) .. controls (375.88,179.25) and (369.63,139.1) .. (399.88,112.6) .. controls (430.13,86.1) and (453.62,99.86) .. (479.22,113.46) .. controls (504.82,127.06) and (523.37,102.86) .. (552.02,102.96) .. controls (580.66,103.06) and (618.33,132.55) .. (595.08,190.05) ;
\draw  [fill={rgb, 255:red, 255; green, 255; blue, 255 }  ,fill opacity=0 ][dash pattern={on 4.5pt off 4.5pt}] (379.6,148.5) .. controls (377.01,159.15) and (413.26,155.9) .. (430.51,152.65) .. controls (447.76,149.4) and (472.51,145.4) .. (490.76,145.15) .. controls (509.01,144.9) and (543.26,149.9) .. (555.51,153.15) .. controls (567.76,156.4) and (601.8,158.91) .. (601.41,147.41) .. controls (601.01,135.9) and (574.26,132.9) .. (558.51,132.65) .. controls (542.76,132.4) and (508.76,136.4) .. (495.01,136.15) .. controls (481.26,135.9) and (436.76,132.15) .. (421.51,133.15) .. controls (406.26,134.15) and (382.19,137.85) .. (379.6,148.5) -- cycle ;
\draw  [color={rgb, 255:red, 0; green, 0; blue, 0 }  ,draw opacity=1 ][fill={rgb, 255:red, 255; green, 255; blue, 255 }  ,fill opacity=0 ][dash pattern={on 4.5pt off 4.5pt}] (396.29,239.32) .. controls (393.29,249.82) and (424.29,264.82) .. (469.29,244.07) .. controls (514.29,223.32) and (535.43,229.53) .. (551.18,232.03) .. controls (566.93,234.53) and (594.95,247.19) .. (592.49,231.63) .. controls (590.04,216.07) and (510.54,208.32) .. (494.54,209.07) .. controls (478.54,209.82) and (448.04,212.47) .. (432.04,217.57) .. controls (416.03,222.68) and (399.29,228.82) .. (396.29,239.32) -- cycle ;

\draw (544.23,179.2) node    {$\Gamma $};
\draw (451.11,229.89) node    {$\Omega _{2}$};
\draw (434.16,120.89) node    {$\Omega _{1}$};

\end{tikzpicture}
		}
	\end{minipage}
	\hspace{1cm}
	\begin{minipage}[b]{0.4\textwidth}
		\centering
		\resizebox{!}{4cm}{%
			\tikzset{every picture/.style={line width=0.75pt}} 

\begin{tikzpicture}[x=0.75pt,y=0.75pt,yscale=-1,xscale=1]

\draw   (217.62,198.38) .. controls (169.96,198.38) and (131.31,192.42) .. (131.24,185.05) -- (304,185.05) .. controls (303.93,192.42) and (265.28,198.38) .. (217.62,198.38) -- cycle ;
\draw  [dash pattern={on 4.5pt off 4.5pt}] (217.62,171.68) .. controls (265.33,171.68) and (304,177.66) .. (304,185.03) .. controls (304,185.04) and (304,185.05) .. (304,185.05) -- (131.24,185.05) .. controls (131.24,185.05) and (131.24,185.04) .. (131.24,185.03) .. controls (131.24,177.66) and (169.91,171.68) .. (217.62,171.68) -- cycle ;
\draw [color={rgb, 255:red, 255; green, 255; blue, 255 }  ,draw opacity=1 ][line width=1.5]    (131.24,185.03) -- (304,185.05) ;
\draw   (131.24,185.05) .. controls (131.24,137.35) and (169.91,98.67) .. (217.62,98.67) .. controls (265.33,98.67) and (304,137.35) .. (304,185.05) .. controls (304,232.76) and (265.33,271.43) .. (217.62,271.43) .. controls (169.91,271.43) and (131.24,232.76) .. (131.24,185.05) -- cycle ;
\draw  [color={rgb, 255:red, 0; green, 0; blue, 0 }  ,draw opacity=0 ][fill={rgb, 255:red, 0; green, 0; blue, 0 }  ,fill opacity=0.1 ] (131.24,185.03) .. controls (131.24,177.66) and (169.91,171.68) .. (217.62,171.68) .. controls (265.33,171.68) and (304,177.66) .. (304,185.03) .. controls (304,192.41) and (265.33,198.38) .. (217.62,198.38) .. controls (169.91,198.38) and (131.24,192.41) .. (131.24,185.03) -- cycle ;

\draw (217.62,185.05) node    {$\Gamma $};
\draw (220.16,135.29) node    {$\Omega _{1}$};
\draw (218.31,231.89) node    {$\Omega _{2}$};

\end{tikzpicture}
		}
	\end{minipage}
	\caption{Example configurations.}
\end{figure}
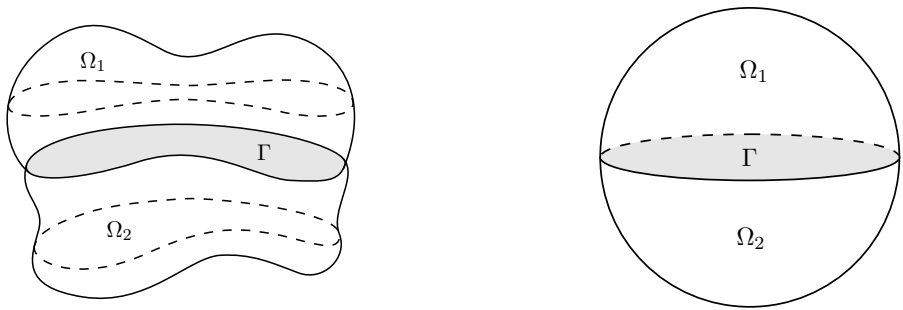

	The precise PDE system we are interested in is: 
	\begin{align}
\label{sys2}
	 w_{tt}-\Delta w&=g\;{\rm in}\; [0,T]\times\Omega_W,
	\\
	u_t-\Delta u&=f \;{\rm in}\; [0,T]\times\Omega_H,
	\\
	w_t&=u\;{\rm on}\; [0,T]\times \Gamma,
	\label{bcd}
	\\
	\partial_{\bf n} u&= \partial_{\bn}w\;{\rm on}\; [0,T]\times \Gamma,\\
	w & \equiv 0\;{\rm on}\; [0,T]\times\Gamma_{W},\\
		u & \equiv 0\;{\rm on}\; [0,T]\times\Gamma_{H} \label{sys2ef}\\
		u(0)=&~u(T);~~w(0)=w(T),~~w_t(0)=w_t(T). \label{sys2e}
\end{align}
We seek conditions on $\Omega_W$, $\Omega_H$, $\Gamma$, and $f,g$ to produce a unique  solution $(u,w)$ to \eqref{sys2}--\eqref{sys2e}.

\subsection{Function Spaces and Weak Solutions}
We will utilize standard Sobolev Hilbert spaces on the constitutive domains, $\Omega_W$ and $\Omega_H$, of the form $H^s(\Omega_{\square})$ with $\square = W,H$, and $H^s_0(\Omega_{\square})$ denoting the closure of the test functions $\mathscr D(\Omega_{\square}) \equiv C_0^{\infty}(\Omega_{\square})$ in the standard $H^s$ norm \cite{grisvard}. Utilizing the standard trace operator (or its extension, when appropriately defined), we can define subspaces of $H^s(\Omega_{\square})$ where boundary values vanish on subsets of $\partial \Omega_{\square}$. We will then denote by $H^k_{\Gamma_{\square}}(\Omega_{\square})$ for $k \in \mathbb N$ the subspace of $H^k(\Omega_{\square})$ which is the collection of $H^k$ functions such that all traces vanish on $\Gamma_{\square}$ up to order $k-1$. Denoting the standard Dirichlet trace operator $\gamma_0: H^1(\Omega_{W}) \to H^{1/2}(\partial \Omega_W)$, we can then write
\begin{equation}
H^1_{\Gamma_W}(\Omega_W) = \{ u \in H^1(\Omega_W)~:~r_{\Gamma_W}\gamma_0[u] = 0\},
\end{equation}
where $r_{\Gamma_W}$ is the restriction in the sense of distributions to $\Gamma_W \subset \partial \Omega_W$. The same definition holds, mutatis mutandis for $H^1_{\Gamma_H}(\Omega_H)$. We can define the associated dual spaces, which will be denoted as $H^1_{\Gamma_W}(\Omega_W)'$, for instance. 
Finally, for the statement of our main results, we will be working with forcing functions which are smoother (in time) than $L^2(0,T;L^2(\Omega_{\square}))$. { Consider $X$ to be a Banach space and $f\in H^{k}(0,T;X)$. For that we define $\tilde{f}:\mathbb{R}\to X$ by $\tilde{f}(jT+t)=f(t)$, for $j\in \mathbb{Z}$ and $t\in [0,T]$. We then say that $f\in H^{k}(0,T;X)$ is time periodic, in notation
$
f\in H^k_\sharp(0,T;X)$, if $\tilde{f}\in H^{k}_{\textrm{loc}}(\mathbb{R};X).
$ In this case, $\tilde f$ has viable time traces (in particular at $t=0$ and $t=T$) and is periodic up to derivatives of order $k-1$. Further we write that $f\in C^k_{\sharp} (0,T;X)$, if $\tilde{f}\in C^{k}(\mathbb{R};X)$. We take the norm in $H^k_{\sharp}(0,T;X)$ to coincide with that of $H^k(0,T;X)$, understanding that the integrability properties are thus local on $\mathbb R$.}

{ We address the coupling condition \eqref{bcd} through the trace operator, in a weak sense. Namely, we can interpret \eqref{bcd}, written strongly as $w_t=u$ on $\Gamma$, through the identity\footnote{The weak boundary condition \eqref{bcdw} implies  that $w_t$ will possess the same boundary regularity as $u$ via duality.}
\begin{align}
\label{bcdw}
\int_0^T\int_{\Gamma} \gamma_0(u)\xi\, dx=-\int_0^T\int_{\Gamma} \gamma_0(w)\partial_t\xi\, dx\text{ for all }\xi\in C^1_\sharp(0,T;L^2(\Gamma)).
\end{align}}

We can now define a {\em finite energy weak solution} to the periodic problem in \eqref{sys2}--\eqref{sys2e}.
\begin{definition} \label{weaksol} 
Assume that the forcing functions satisfy $f \in L^2(0,T;H^1_{\Gamma_H}(\Omega_H)')$ and $g \in L^2(0,T;L^2(\Omega_W))$. We say that $$(w,u) \in  \left [L^2(0,T;H^1_{\Gamma_W}(\Omega_W))\cap H^1_{\sharp}(0,T;L^2(\Omega_W))\right] \times L^2(0,T;H^1_{\Gamma_H}(\Omega_H))$$ is a finite energy weak solution to \eqref{sys2}--\eqref{sys2e} { if: 
\begin{itemize} 
\item for every pair of smooth, $T$-periodic test functions $(\psi,\phi)$ with $\phi|_{\Gamma}=\psi|_{\Gamma}$,~ $\phi|_{\Gamma_H}=\psi|_{\Gamma_W}=0$, the following identity holds:
\begin{align}\label{weakdef}
\int_0^T\int_{\Omega_H}[-u \phi_t +\nabla u \cdot \nabla \phi] + \int_0^T\int_{\Omega_W}[-w_t\psi_t + \nabla w \cdot \nabla \psi] = \int_0^T\langle f,\phi \rangle +\int_0^T\int_{\Omega_W} g \psi,
\end{align} where  $\langle \cdot, \cdot \rangle$ denotes the  pairing between $H^1_{\Gamma_H}(\Omega_H)$ and its dual;
\item $w(0)=w(T)$;
\item  and \eqref{bcdw} is satisfied. 
\end{itemize}}

\end{definition}
In the above definition, it is sufficient to take test functions 
\[(\psi,\phi) \in H^1_{\sharp}\left(0,T;   H^1_{\Gamma_W}(\Omega_W) \times H^1_{\Gamma_H}(\Omega_H)\right)\]
 which obey the added constraint that $r_{\Gamma} \gamma_0[\phi]=r_{\Gamma} \gamma_0[\psi]$, where, as above, $r_{\Gamma}$ is the restriction (in distribution) to $\Gamma$.  
\begin{remark} Typically \cite{temam,pata}, weak solutions to the unforced wave equation are considered in the functional class $W^{1,\infty}(0,T;L^2(\Omega_W))\cap L^{\infty}(0,T;H^1_{\Gamma_W}(\Omega_W))$ and obey the energy identity
\begin{equation}
||\nabla w(t)||^2_{\mathbf L^2(\Omega_W)}+||w_t(t)||^2_{L^2(\Omega)} = ||\nabla w(0)||^2_{\mathbf L^2(\Omega_W)}+||w_t(0)||^2_{L^2(\Omega)},\text{ for all }t \in [0,T].
\end{equation} Here, owing to the loss of the baseline energy identity for the wave equation, we initially work in $L^2$-based function spaces that carry less information. A posteriori, we will reconstruct the above $L^{\infty}$ estimate with more temporal regularity assumed on the forcing. 
\end{remark}

\subsection{Uniqueness of Solutions}\label{sec:unique}
	By linearity,  uniqueness of weak solutions follows from showing that the zero solution is unique when the forcing data $f \equiv 0$ and $g\equiv 0$ $a.e.$ space-time. Our uniqueness theorem is inspired by the approach in the control literature \cite{ZZ1,ZZ2,trig1} and appeals to the celebrated Holmgren Theorem \cite{holmgren} in the context of the wave equation---see  \cite{eller,eller2,littman}. We note that uniqueness is non-trivial here, since we are not working with smooth or semigroup solutions, as in \cite{ZZ1,galdi,dutch,trig1}; moreover, as we do not prescribe initial data for the periodic problem, uniqueness consists of reconstructing the initial (Cauchy) data for the problem as part of the  solution. Further comments are provided in Remark \ref{uniqueness}. 
	
	\begin{theorem}\label{th:uniqueness}  Suppose that domains $\Omega_W$, $\Omega_H$ are  connected, Lipschitz domains in $\mathbb R^d$ such that $\Omega_W\cap\Omega_H=\emptyset$ and  $\overline{\Omega} =\overline{\Omega_W \cup \Omega_H}$. { Moreover}, assume that the interface $\Gamma=\overline{\Omega_W}\cap\overline{\Omega_H}$ is of class $\mathcal C^1$. If  $f \equiv 0$ and $g \equiv 0$ in \eqref{weakdef}, then the  solution, as in Definition \ref{weaksol}, is the trivial solution. Thence, by superposition, all weak solutions in the sense of Definition \ref{weaksol} are unique.
	\end{theorem}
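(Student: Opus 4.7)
By linearity, it suffices to show that any finite-energy weak solution with $f\equiv 0$, $g\equiv 0$ vanishes. The strategy is to extract dissipation from the heat component via an energy identity, transfer this information to the wave component through the interface coupling, and conclude using a Holmgren-type unique continuation principle.

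\textbf{Step 1 (Energy identity).} I would use the pair $(\psi,\phi)=(w_t,u)$ as a test pair in \eqref{weakdef}. The interface compatibility $\psi|_\Gamma=\phi|_\Gamma$ is precisely the coupling $w_t=u$, while $\psi|_{\Gamma_W}=\phi|_{\Gamma_H}=0$ follows from the Dirichlet conditions. Formally, substitution into \eqref{weakdef} (with $f=g=0$) yields
\begin{equation*}
\int_0^T\!\!\int_{\Omega_H}\!\bigl[-u\,u_t+|\nabla u|^2\bigr] + \int_0^T\!\!\int_{\Omega_W}\!\bigl[-w_t w_{tt}+\nabla w\cdot\nabla w_t\bigr]=0.
\end{equation*}
All integrands other than $|\nabla u|^2$ are perfect time derivatives of $\tfrac12\|u\|_{L^2(\Omega_H)}^2$, $\tfrac12\|w_t\|_{L^2(\Omega_W)}^2$, and $\tfrac12\|\nabla w\|_{L^2(\Omega_W)}^2$, whose time-integrals vanish by $T$-periodicity. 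Therefore $\int_0^T\|\nabla u\|_{L^2(\Omega_H)}^2\,dt=0$, and the zero Dirichlet trace on $\Gamma_H$ forces $u\equiv 0$ on $[0,T]\times\Omega_H$. Since $(w_t,u)$ lacks the time regularity required of a test pair, I would perform a periodic time-mollification $(w_t^\epsilon,u^\epsilon)$, insert this smooth pair into \eqref{weakdef}, and pass to the limit as $\epsilon\to 0$. Time-convolution commutes with spatial traces, so the coupling constraint $w_t^\epsilon|_\Gamma=u^\epsilon|_\Gamma$ is preserved throughout the regularization.

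\textbf{Step 2 (Interface Cauchy data for the wave).} Knowing $u\equiv 0$, the coupling conditions now read, in the sense of traces,
$$ w_t|_\Gamma=u|_\Gamma=0, \qquad \partial_\bn w|_\Gamma=\partial_\bn u|_\Gamma=0. $$
Set $v:=w_t$. Then $v$ satisfies the homogeneous wave equation in $\Omega_W$, with $v|_{\Gamma_W}=0$ (from $w|_{\Gamma_W}=0$) and, on $\Gamma$, both $v=0$ and $\partial_\bn v=(\partial_\bn w)_t=0$. Via periodic extension in time to $\mathbb{R}$, these Cauchy data hold on $\Gamma\times\mathbb{R}$.

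\textbf{Step 3 (Holmgren unique continuation).} The hypersurface $\Gamma$, being time-independent and of class $\mathcal C^1$, is non-characteristic for the d'Alembertian $\partial_t^2-\Delta$. By the Holmgren-type uniqueness theorem (in the form applicable to finite-energy solutions across $\mathcal C^1$ interfaces, cf.\ \cite{eller}), the vanishing of the complete Cauchy data of $v$ on $\Gamma\times\mathbb{R}$ forces $v\equiv 0$ in $\Omega_W\times[0,T]$. Hence $w_t\equiv 0$.

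\textbf{Step 4 (Vanishing of $w$).} Since $w_t\equiv 0$, the function $w$ is time-independent, and the wave equation reduces to $-\Delta w=0$ in $\Omega_W$. Together with $w|_{\Gamma_W}=0$ and $\partial_\bn w|_\Gamma=0$, integration by parts yields $\int_{\Omega_W}|\nabla w|^2=0$, so $w$ is constant, and the Dirichlet trace on $\Gamma_W$ (of positive surface measure) forces $w\equiv 0$.

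\textbf{Main obstacle.} The heart of the argument is Step 3: the Holmgren-type unique continuation for a weak (finite-energy) wave solution through a merely $\mathcal C^1$ interface. The classical formulation requires analytic (or at least smoother) surfaces, so a refined version must be invoked; it is precisely here that the $\mathcal C^1$ regularity of $\Gamma$, combined with the freedom (via periodic extension) to work on arbitrarily long time intervals, is essential. A subsidiary technical point is the mollification in Step 1: the smoothing must be carried out so as to respect both periodicity and the interface constraint, and the limiting passage must be justified for the particular cross-terms appearing in the weak form.
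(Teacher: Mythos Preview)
Your proposal is correct and follows essentially the same route as the paper: periodic time-mollification to justify $(w_t,u)$ as a test pair, cancellation of the energy boundary terms by periodicity to force $u\equiv 0$, passage to $\hat w=w_t$ solving an overdetermined wave problem, and application of the Holmgren theorem (in the form of \cite{eller}) on the periodically extended solution. Your Step~4 is in fact slightly more explicit than the paper's, which jumps from $w_t\equiv 0$ directly to ``$w$ is constant'' without spelling out the intermediate harmonic argument you give.
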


	\begin{proof} We begin with a weak solution in hand, $$(w,u) \in  \left [L^2(0,T;H^1_{\Gamma_W}(\Omega_W))\cap H^1_{\sharp}(0,T;L^2(\Omega_W))\right] \times L^2(0,T;H^1_{\Gamma_H}(\Omega_H)).$$ We assume that this solution satisfies \eqref{weakdef}, and hence $(u,w)$ satisfies 
	\begin{align} \label{dis1}
	 w_{tt}-\Delta w&=0\;~{\rm in}~\; \mathscr D'\big((0,T)\times\Omega_W\big),
	\\ \label{dis2}
	u_t-\Delta u&=0\;~{\rm in}~\; \mathscr D'\big((0,T)\times\Omega_H\big).
	\end{align}
 To obtain an energy estimate for weak solutions we must regularize the solution, as \typo{it not}{it is not} smooth enough to be utilized as a test function. As such, we extend $(u,w,w_t)$, in the sense of $L^2_t$, periodically (\typo{a.e.}{a.e. in} $t$) to obtain (with the same labels) $$(u,w)\in L^2_{\text{loc}}(\mathbb R;H^1_{\Gamma_H}(\Omega_H))\times \left [L^2_{\text{loc}}(\mathbb R;H^1_{\Gamma_W}(\Omega_W))\cap H^1_{\text{loc}}(\mathbb R;L^2(\Omega_W))\right].$$ We then perform the standard time-mollification on $\mathbb R$ to obtain temporally smooth functions $(u_{\rho},w_{\rho})$ satisfying \eqref{dis1}--\eqref{dis2}, as well as \eqref{weakdef} with the same spatial regularity of $(u,w)$.  Moreover, the mollified solution $(w_{\rho},u_{\rho})$ is $T$-periodic, by construction. Thus $(w_{\rho},u_{\rho})$ is a viable test function in \eqref{weakdef}:
	$$\int_0^T\big[-(w_{t},w_{\rho,tt})_{\Omega_W}+(\nabla w,\nabla w_{\rho,t})_{\Omega_W}-(u,u_{\rho,t})_{\Omega_H}+(\nabla u, \nabla u_{\rho})_{\Omega_H}\big] dt=0.$$
	Directly from the weak formulation---with $f\equiv 0$ and $g\equiv 0$---we infer that $w_{tt} \in L^2(0,T;[H^1_{\Gamma_W}(\Omega_W)]')$ and $u_{t} \in L^2(0,T;[H^1_{\Gamma_H}(\Omega_H)]')$.  Integrating by parts in time and utilizing the time-periodicity of each of the functions, we obtain 
			$$\int_0^T\big[\langle w_{tt},w_{\rho,t}\rangle_{\Omega_W}+(\nabla w,\nabla w_{\rho,t})_{\Omega_W}+\langle u_t,u_{\rho}\rangle_{\Omega_H}+(\nabla u, \nabla u_{\rho})_{\Omega_H}\big] dt=0,$$
where $\langle \cdot, \cdot \rangle$ represents the appropriate duality pairing. 
	From this,  in passing to the limit $\rho\searrow 0$ as in \cite[Lemma II.4.1]{temam}, we obtain that $u \in C([0,T];L^2(\Omega_H))$, $w \in C([0,T];H^1_{\Gamma_W}(\Omega_W))$, and $w_t \in C([0,T]; L^2(\Omega_W))$, as well as the identity
\begin{align} ||u(T) ||_{L^2(\Omega_H)}^2+& ||\nabla u||^2_{L^2(0,T;\mathbf L^2(\Omega_H))}+||\nabla w(T)||_{L^2(0,T;\mathbf L^2(\Omega_W))}^2+||w_t(T)||^2_{L^2(0,T;L^2(\Omega_W))} \nonumber  \\
 =&~  ||u(0) ||_{L^2(\Omega_H)}^2+||\nabla w(0)||_{L^2(0,T;\mathbf L^2(\Omega_W))}^2+||w_t(0)||^2_{L^2(0,T;L^2(\Omega_W))} \end{align}
Invoking time periodicity of $u,~w$ and $w_t$, and cancelling quantities at time $t=0$ and $t=T$, we obtain
	\begin{equation}
	||u||^2_{L^2(0,T;H_{\Gamma_H}^1(\Omega_H))} \lesssim 0.
	\end{equation}
	Thence we infer that the heat component $u \equiv 0 \in L^2(0,T;H^1_{\Gamma_H}(\Omega_H))$. 
	
	Now, returning to the dynamics, we  zero out the quantities in $u$. From the weak formulation, we  read off the following overdetermined boundary value problem:
	\begin{align}
	 w_{tt}-\Delta w&=0\;{\rm in}\; [0,T]\times\Omega_W,
	\\
	w_t&=0\;{\rm on}\; [0,T]\times \Gamma,
	\\
	\partial_{\mathbf n} w&= 0\;{\rm on}\; [0,T]\times \Gamma,\\
	w & \equiv 0\;{\rm on}\; [0,T]\times\Gamma_{W}.
\end{align}
Above, the interior equation in $\Omega_W$ is interpreted distributionally,  with $L^2(\Omega_W)$ data, from which the Neumann-type boundary condition may be {  considered as part of the distributional Cauchy data} \cite[{  Definition} 2.3.3.]{eller}. Moreover, by periodicity, the above equation holds on $(0,T')$ for any $T'\in\R$.
We may then differentiate the problem in time, and consider the system in the variable $\hat w =w_t$, yielding
	\begin{align}
	 \hat w_{tt}-\Delta \hat w&=0\;{\rm in}\; [0,T']\times\Omega_W,
	\end{align}
	with conditions
	\begin{align}
	\hat w=0\;{\rm on}\; [0,T']\times \Gamma,~~
	\partial_{\mathbf n} \hat w= 0\;{\rm on}\; [0,T']\times \Gamma,~~
	\hat w  \equiv 0\;{\rm on}\; [0,T']\times\Gamma_{W},
\end{align}
for any $T'\in\R$. The above system is overdetermined on $\Gamma$ in the variable $\hat w$. At this point, we invoke the classical Holmgren theorem, valid for distributional solutions to the above wave equation. {  (Consider the statement in  
\cite[Theorem 1, p.365]{littman}, requiring only that the interface is $\mathcal C^1$ and $\Omega_W$ is Lipschitz.)} Upon application, we obtain that there exists time $\tau_{\Omega_W}\in\R$ such the solution $(\hat w, \hat w_t)$ is identically zero on $(\tau_{\Omega_W},\infty)$. 

Then, per the assumed periodicity, we obtain that the solution $(\hat w,\hat w_t)\equiv 0$ on $\R$.
	
	 Finally, since $w_t \equiv 0$ in space time, $w$ is constant (a.e.) on $[0,T] \times \Omega_W$. We  then invoke the  Dirichlet condition on $\Gamma_W$ to infer that the solution $w$ to the wave equation above is trivial. This completes the proof of uniqueness of solutions in the sense of Definition \ref{weaksol}. 
	\end{proof}

\begin{remark}
	A slightly weaker uniqueness result follows indirectly from results obtained in \cite{galdi,ZZ2}. First, we note that the heat-wave Cauchy problem  is strongly (asymptotically) stable \cite[Theorem 4]{ZZ2}. According to \cite{galdi},  such systems have  unique $T$-periodic solutions for certain $T$-periodic right-hand sides \cite[Theorem 3.4.]{galdi}. However, since the system is linear, it is enough to observe uniqueness for any one right-hand side. We emphasize that \cite[Theorem 3.4.]{galdi} refers to {\em semigroup} or {\em mild solutions}, and therefore this uniqueness result is less general then Theorem \ref{th:uniqueness}. However, we note that the Holmgren's Theorem is also used in \cite{ZZ2} to obtain critical estimates needed for strong stability, and therefore both approaches ultimately rely upon the same theorem. 
\end{remark}

	\begin{remark}\label{uniqueness} As with all problems involving the wave equation, a separate argument is needed to show (a) uniqueness of weak solutions and (b) a posteriori time regularity in the sense of $C([0,T];H)$. This issue stems from the fact that the wave  velocity is not regular enough to be used as a test function in the  weak formulation. On the other hand, the constructed solution typically obeys energy inequalities and possesses more smoothness. In our results, we appeal to the uniqueness argument above, and so our constructed solution---with its regularity properties---will immediately be the unique solution. See \cite{pata} for a recent discussion of these issues and conditions under which a secondary uniqueness argument may not be needed to infer weak well-posedness with augmented temporal regularity. \end{remark}

	\section{Statement of Results, Context, and Illustrative Examples} \label{results}
	In this section we  state our main results, which place geometric conditions on the domains $\Omega_W$ and $\Gamma_W$ as well as the time regularity of the forcing functions $f$ and $g$,  to produce existence of periodic solutions, with associated well-posedness estimates. Uniqueness has already been shown under more general hypotheses in Section \ref{uniqueness}.

\subsection{Main Results}
For both of  theorems below, we recall the {standing hypotheses}: $\Omega_W,~ \Omega_H$ are connected, Lipschitz,
 and the interface $\Gamma = \partial\Omega_W\cap \partial\Omega_H$ is of class $\mathcal C^1$. Below, we focus on periodic solutions in the sense of {\em finite energy weak solution}, as in Definition \ref{weaksol}. Our theorems are stated in terms of geometric conditions, which are the main focus of the technical Section \ref{geomsec}. 
The geometric hypotheses on $(\Omega_W,\Gamma_W)$ mentioned below are: (i) a generalization of the classical Geometric Optics Condition,  shown below as Condition \ref{dotcond}, and (ii) a technical condition for regularity of wave solutions, denominated the E-property in Definition \ref{EProperty}.

Our first two main results, Theorems \ref{th:main1} and \ref{th:main2}, provide conditions for existence of {\em finite energy weak solutions}, as in Definition \ref{weaksol}. The two ancillary results, Theorems \ref{th:main1-general} and \ref{th:main2-general}, invoke the notion of {\em very weak solution} and provide analogous results
\begin{theorem}\label{th:main1}
If the tuple $(\Omega_W,\Gamma_W)$ satisfies the Generalized Optics Condition~\ref{def:GOC} and $\Omega_W$ has the E-property (Definition \ref{EProperty}), 
then for every  $f\in H_{\sharp}^3(0,T;H^1_{\Gamma_H}(\Omega_H)')$ and every  $g\in H_{\sharp}^6(0,T;L^2(\Omega_W))$
there exists a unique finite energy periodic weak solution $(w,u)$ in the sense of Definition \ref{weaksol}. 

The solution satisfies $u \in C_{\sharp}([0,T];L^2(\Omega_H)) \cap L^2(0,T;H^1_{\Gamma_H}(\Omega_H))$ and \\  $w \in C_{\sharp}([0,T];H_{\Gamma_W}^1(\Omega_W)) \cap C_{\sharp}^1([0,T];L^2(\Omega_W))$, and the following estimate
\begin{align} 
\norm{u}_{H^3(0,T;H_{\Gamma_H}^1(\Omega_H))}+&\norm{w}_{L^\infty(0,T;H_{\Gamma_W}^1(\Omega_W))}+\norm{\partial_t w}_{L^\infty(0,T;L^2(\Omega_W))}  \\ \lesssim&~ \norm{f}_{H^3(0,T;H^1_{\Gamma_H}(\Omega_H)')}+\norm{g}_{H^6(0,T;L^2(\Omega_W))}.
\end{align}
\end{theorem}

In a second case, we can consider a {\em graph} framework, which permits a broader class of domains than in Theorem \ref{th:main1}, but requires additional regularity of the forcing data. 
\begin{theorem}\label{th:main2}
If the tuple $(\Omega_W, \Gamma_W)$ satisfies the Graph Optics Condition~\ref{def:GGC}, and $\Omega_W$ has the E-property,  
then for every $f\in H_{\sharp}^4(0,T;H^1_{\Gamma_H}(\Omega_H)')$ and every  $g\in H_{\sharp}^8(0,T;L^2(\Omega_W))$, 
 there exists a unique finite energy periodic weak solution $(w,u)$ in the sense of Definition \ref{weaksol}. 
 
 The solution satisfies $u \in C_{\sharp}([0,T];L^2(\Omega_H)) \cap L^2(0,T;H^1_{\Gamma_H}(\Omega_H))$ and \\ $w \in C_{\sharp}([0,T];H_{\Gamma_W}^1(\Omega_W)) \cap C_{\sharp}^1([0,T];L^2(\Omega_W))$, and the following estimate
\begin{align}
\norm{u}_{H^4(0,T;H_{\Gamma_H}^1(\Omega_H))}+&\norm{w}_{L^\infty(0,T;H_{\Gamma_W}^1(\Omega_W))}+\norm{\partial_t w}_{L^\infty(0,T;L^2(\Omega_W))} \\ \lesssim & ~ \norm{f}_{H^4(0,T;H^1_{\Gamma_H}(\Omega_H)')}+\norm{g}_{H^8(0,T;L^2(\Omega_W))}.
\end{align}
\end{theorem}

\begin{remark}\label{regcauchy} We note that the regularity assumptions on the  forces $f$ and $g$ are   higher than that of  the corresponding  results for the standard Cauchy problem---see, for instance, \cite{ZZ2,pazy}. However, as we will show by the examples in the next sub-section, this regularity gap is  inherent to periodic problems involving the wave equation.
\end{remark}

If the above geometric assumptions are satisfied, the concept of solutions can be extended further to accommodate {essentially} any right-hand side, so long as it resides in some negative (in time) Sobolev space. 
This conclusion arises from a straightforward observation: since the system is linear and no initial conditions are imposed, the time derivative of a periodic solution is itself a periodic solution to the same problem, but with time-differentiated right-hand sides. Here, we state the main result concerning so called {\em very weak solutions}; the precise definition and detailed discussions of this notion of solution is provided in Appendix B.
\begin{theorem}\label{th:main1-general}
	If the tuple $(\Omega_W,\Gamma_W)$ satisfies the Generalized Optics Condition~\ref{def:GOC}, and $\Omega_W$ has the E-property Definition \ref{EProperty}, then
 for every  $f\in H_{\sharp}^{3-k}(0,T;H^1_{\Gamma_H}(\Omega_H)')$, every $g\in H_{\sharp}^{6-k}(0,T;L^2(\Omega_W))$, and any $k\in\N$, 
	there exists a very weak $T$-periodic solution $(w,u)$ in the sense of Definition \ref{VeryWeakDef}. The solution satisfies the following estimate
	\begin{align} 
		\norm{u}_{H^{3-k}(0,T;H_{\Gamma_H}^1(\Omega_H))}+&\norm{w}_{W^{-k,\infty}(0,T;H_{\Gamma_W}^1(\Omega_W))}+\norm{\partial_t w}_{W^{-k,\infty}(0,T;L^2(\Omega_W))}  \\ \lesssim&~ \norm{f}_{H^{3-k}(0,T;H^1_{\Gamma_H}(\Omega_H)')}+\norm{g}_{H^{6-k}(0,T;L^2(\Omega_W))}.\nonumber
	\end{align}
\end{theorem}

\begin{theorem}\label{th:main2-general}
	If the tuple $(\Omega_W, \Gamma_W)$ satisfies the Graph Optics Condition~\ref{def:GGC}, and $\Omega_W$ has E-property Definition \ref{EProperty},  
	 then for every  $f\in H_{\sharp}^{4-k}(0,T;H^1_{\Gamma_H}(\Omega_H)')$, every  $g\in H_{\sharp}^{8-k}(0,T;L^2(\Omega_W))$, and any $k\in\N$, 
	there exists a very weak solution $(w,u)$ in the sense of Definition \ref{verweak}. The solution satisfies the following estimate
	\begin{align} 
		\norm{u}_{H^{4-k}(0,T;H_{\Gamma_H}^1(\Omega_H))}+&\norm{w}_{W^{-k,\infty}(0,T;H_{\Gamma_W}^1(\Omega_W))}+\norm{\partial_t w}_{W^{-k,\infty}(0,T;L^2(\Omega_W))}  \\ \lesssim&~ \norm{f}_{H^{4-k}(0,T;H^1_{\Gamma_H}(\Omega_H)')}+\norm{g}_{H^{8-k}(0,T;L^2(\Omega_W))}.\nonumber
	\end{align}
\end{theorem}

The necessity of considering very weak solutions is demonstrated through the examples in the next section.

\subsection{Examples}\label{exams}
Here we provide two explicit solutions to problem \eqref{sys2}--\eqref{sys2e} and discuss their regularity. These examples are not finite energy weak solutions, but satisfy the periodic problem in the weaker sense of Appendix B. 
 Let us consider a simple geometric setting in $2-d$, taking $\Omega_W=(0,\pi)\times (0,1)$, $\Omega_H=(0,\pi)\times (-1,0)$ and take $T=2\pi$. Then the interface $\Gamma$ is given by $\Gamma=(0,\pi)\times\{0\}$. 

Now, let $\phi\in C^2_0(0,1)$, and define 
\begin{equation}\label{wee}
w_n(t,x,y)=\sin(nt)\sin(nx)\phi(y),\quad n\in\N.
\end{equation}
By direct calculation, we see that each $w_n$ satisfies the following periodic boundary value problem for the wave equation:

\begin{align*}
\partial_{tt}w_n-\Delta w_n=g_n\;{\rm in}\; [0,T]\times\Omega_W,
\\
w_n=0\;{\rm on}\; [0,T]\times\partial\Omega_W,
\\
\partial_{\mathbf n} w_n=-\partial_y w_n=0 \;{\rm on}\; [0,T]\times\Gamma,
\\
u_n(0)=u_n(T),\; \partial_t u_n(0)=\partial_t u_n(T) \; {\rm in}\; \Omega_W,
\end{align*}
where \begin{equation} \label{gee} g_n(\typo{x,y;t}{t, x,y})=-\sin(nt)\sin(nx)\phi''(y).\end{equation} Therefore $(0,w_n)$ is a {\em smooth solution} to problem \eqref{sys2}--\eqref{sys2e}, with data $(f,g)=(0,g_n)$.

In this framework, we will produce two instructive examples. 
\subsubsection{Example 1}
\label{ex:1}
Let $G_1=\sum_{n=1}^{\infty}\alpha_n g_n,$ where  the $\alpha_n$ are chosen so that $$\sum_{n=1}^{\infty}\alpha_n^2<\infty, ~~\text{ and }~~ \sum_{n=1}^{\infty}n^s\alpha_n^2=\infty, ~~s>0.$$ Therefore, by Plancherel's Theorem, $G_1\in L^2 ((0,T); L^2(\Omega_W))$, but $G_1\not\in H^s(0,T;L^2(\Omega_W))$ and $G_1\notin H^s(0,\pi;L^2((0,T)\times (0,1))$. We can then define an associated solution
$$
\hat{w}=\sum_{n=1}^{\infty}\alpha_n w_n.
$$
By construction, we observe that $\hat{w}\in L^2((0,T); L^2(\Omega_W))$ (and not better in time). This function, $\hat w(t,\mathbf x)$ is also a time-periodic  solution in the sense of Appendix B (i.e., weaker than Definition \ref{weaksol}).  Namely,  for every $\psi\in C^2([0,T];H^2_{\Gamma_W}(\Omega_W))$, such that $\psi(0)=\psi(T)$ and $\partial_t\psi (0)=\partial_t\psi (T)$, it holds that
\begin{align}\label{verweak}
\int_0^T\int_{\Omega_W}\hat{w}\partial_{tt}\psi-\int_0^T\int_{\Omega_W}\hat{w}\Delta\psi=\int_0^T\int_{\Omega_W}G_1\psi.
\end{align}
The above notion of solution  connects to  our discussion in Appendix B, and is delineated there a ``very weak" solution.
Notice that $\hat{w}$ {\em is not a finite energy solution}, since $\partial_x \hat{w}, ~\partial_t \hat{w}\not \in L^2((0,T); L^2(\Omega_W))$. Moreover,
the time traces $\hat{w}(0)$ and $\hat{w}(T)$ are not well-defined with point-wise values in $L^2(\Omega_W)$, but these time traces are well-defined in the sense of $[H^{2}(\Omega_W)\cap H^1_{\Gamma_W}(\Omega_W)]'$.
Thus, considering $\hat w$ as a time-periodic weak solution in the sense of \eqref{verweak}, we see that the data-to-solution mapping is of regularity
$$G_1 \in L^2(0,T;L^2(\Omega_W)) \mapsto \hat w \in L^2(0,T;L^2(\Omega_W)).$$
This constitutes an apparent loss, given that $G_1$, taken as forcing for a Cauchy problem, would produce a finite energy weak solution $w \in C([0,T]; H_{\Gamma_W}^1(\Omega_W)) \cap C^1([0,T]; L^2(\Omega_W))$ \cite{temam,pata}. 

\subsubsection{Example 2}
\label{ex:2}
Now, consider the $g_n$ as above in \eqref{gee} and let $G_2=\sum_{n=1}^{\infty}\beta_n g_n,$ where $\beta_n$ are chosen so that $$\sum_{n=1}^{\infty}n^2\typo{\alpha_n}{\beta_n}^2<\infty~ \text{ and }~ \sum_{n=1}^{\infty}n^s\typo{\alpha_n}{\beta_n}^2=\infty,$$ for all $s>2$. We then define 
$$
\bar w=\sum_{n=1}^{\infty}\typo{\alpha_n}{\beta_n} w_n.
$$
Now $(0,\bar w)$ is a $T$-periodic {\em finite energy weak solution}, in the sense of Definition \ref{weaksol}, with the given right-hand side $(0,G_2)$. This produces the data-to-solution mapping of the form
$$G_2 \in H^1((0,T) \times \Omega_W) \mapsto \hat w \in H^1((0,T) \times \Omega_W).$$

\subsubsection{Conclusion From Examples}
The aforementioned examples suggest that the minimal time regularity assumption on the right-hand side, under which we expect the existence of a finite energy  $T$-periodic weak solution,  is $(f,g)\in L^2(0,T;H^1_{\Gamma_H}(\Omega_H)')\times H^1(0,T;L^2(\Omega_W))$. In the special case of $f\equiv 0 \equiv u$, as  considered in the examples above, we find by~\eqref{est2} in Proposition~\ref{prop:WaveEst} that the apparent loss is sharp; see Remark~\ref{rem:sharp} for a precise explanation.

In Theorem \ref{th:main1} {however}, we assume several additional degrees of Sobolev regularity, as compared to the minimal regularity suggested by these examples. In Section \ref{sec:construction} we attempt to further identify the source of this additional regularity loss associated with lifting non-homogeneous ``heat" data through the boundary into interior dynamics.

\subsection{Main Contributions and Relation to Literature}\label{generalize}

We first note that there seem to be {\em very few existence and uniqueness results for periodic solutions of partially-damped systems}, such as the heat-wave system considered here. Moreover, in focusing on periodic solutions for fluid-structure interactions (linear and nonlinear), available results \cite{casanova,srd,sebastian,sebastian2} seem to require some degree of dissipation or parabolicity in the structural (wave) component. Thus  {\em the main results here are novel in providing a construction of periodic solutions without wave dissipation}. 

Returning to the reference \cite{galdi}, we can provide some comparison with the results at hand. In that work, solutions are considered corresponding to the  underlying (linear) semigroup for the partially-damped dynamics; specifically, solutions are of the form $C([0,T];H)$ (where $H$ is the energy space), with $L^1(0,T;H)$ forcing, and the solution represented through the variation of parameters formula \cite{pazy,pata,BAL}. As stated above, uniform stability of the semigroup would ensure that there exists a unique periodic solution to the system for any such force in $L^1(0,T;H)$---in particular, there would be no resonant forces in this class. However, for the partially damped heat-wave system here, it is well-known from the results of \cite{ZZ1,ZZ2,trig1} that the underlying heat-wave semigroup corresponding to \eqref{sys2}--\eqref{sys2e} is not uniformly stable. Indeed, the underlying dynamics are only {\em polynomially stable}\footnote{also known as {\em almost-uniformly stable}} (see \cite{ZZ1,ZZ2,trig1,dutch}), but only if certain geometric conditions on $\Omega_W$ and $\Gamma_W$ are satisfied---the so called {\em Geometric Optics Condition}. We will provide more details on geometric assumptions in Section \ref{geomsec}. In the general case the dynamics are only {\em strongly stable}.  In such a case, the results of \cite{galdi} provide a weaker result, ensuring only the existence of a dense subset of $L^1(0,T;H)$ for which there exists a unique $T$-periodic solution. Hence, for the system \eqref{sys2}--\eqref{sys2e} presented here, \cite{galdi} ensures that for a dense subset of $f \in L^1(0,T; L^2(\Omega_H))$ and $g \in L^1(0,T; H^1_{\Gamma_W}(\Omega_W) \times L^2(\Omega_W))$, existence and uniqueness of periodic solutions is ensured. {\em This does not preclude the possibility of some resonant forces in these functional classes.} This density result depends abstractly on the {\em Mean-Ergodic Theorem}, and does not provide a construction or elaboration of this dense set. 

Our point of view here then contrasts and supplements the results in \cite{galdi}. We explicitly construct a dense subset of $L^1(0,T; L^2(\Omega_H)\times H^1_{\Gamma_W}(\Omega_W) \times L^2(\Omega_W))$ for which a unique, finite energy (weak) periodic solution is available. Our estimates show precisely the loss of regularity in the mapping $(g,f) \mapsto (w,u)$ associated to the periodic problem. Moreover, we tie the solution construction to the geometry of $\Omega_W$ and elucidate the connection between classical boundary control estimates and a priori estimates for the heat-wave system. Since the dissipation must provide control for the interior states, we must ``propagate" the dissipation from the parabolic to the hyperbolic component  via the  interface $\Gamma$. In doing so, we inherit the need for geometric restrictions on the tuple $(\Omega_W,\Gamma_W)$. As such, we suspect an insurmountable loss will be present in any construction (the examples in Section \ref{exams} speak to this point)---see also the proof of non-compactness of the heat-wave resolvent operator, as discussed in \cite{ZZ2,trig1}.

We also point out that the results in \cite{galdi}  focus on semigroup (mild or generalized) solutions, rather than weak solutions. In this sense, our definition of weak solutions in Definition \ref{weaksol} (as well as {\em very weak solutions} in Appendix B) is also a contribution of this manuscript. We note that issues such as a priori estimates and uniqueness for weak solutions are non-trivial for systems involving the wave equation \cite{evans,BAL}, and even for the wave equation itself \cite{temam}---see the recent \cite{pata}. Hence our uniqueness result, which exploits time periodicity as well as the classical Holmgren theorem \cite{eller}, is non-trivial. 

Lastly, a feature of our analysis is that the geometric control conditions on domains are more general than those from classical control---see \cite{ZZ1,ZZ2}. Our analysis admits domains with certain types of degeneracies and singularities into the class of the domains for which one  can obtain a priori estimates---see Section \ref{geomsec}. This occurs because of the flexibility afforded to us in periodic problems, when integrating by parts in time: there is a certain ability to ``swap" time and space derivatives in the periodic setting \typo{on that}{that} is not available for Cauchy problems. The price to pay, however, is that the regularity gap between the data and the finite energy weak solution grows, as in Remark \ref{regcauchy}. 

	Further generalizations of our results are possible. First, for the heat dynamics, one can accommodate general elliptic operators and certain nonlinear operators with a  variational structure, which will admit \typo{priori}{a priori} estimates. For instance, in the case where
	\[
	Au=-\mathrm{div}(F'(\nabla u)), \text{ with }\frac{F'(\nabla u)}{\abs{F'(\nabla u)}}= \frac{\nabla u}{\abs{\nabla u}},
	\]
	with some additional homogeneous grown conditions. In this case, analogous a priori estimates will be available. Additionally, considering variable coefficients or quasilinearity in the hyperbolic component will be permissible, yet will be strongly tied to  geometric assumptions on $(\Omega_W,\Gamma_W)$.
	Indeed, assumptions on an operator of the form
	\[
	Lw= -\mathrm{div}(B^T B\nabla u)
	\]
	will yield respective assumptions on the existence of the vector field $\mathbf b$ utilized in our analysis of a priori estimates. In all cases, the coupling conditions across $\Gamma$ will be determined by the respective co-normal derivatives for $A$ and $L$.

 \section{Geometric Setting}\label{geomsec}
The geometry of wave domains is central to our analysis. While we will place no additional assumptions on the parabolic domain, $\Omega_H$, the hyperbolic domain $\Omega_W$ will be restricted. 
One of the main contributions here is to exposit the connection between: (a) the geometry of the tuple $(\Omega_W,\Gamma_W)$, (b) the existence of a finite energy weak solution, and (c) the regularity loss between forcing data and the solution.  In line with classical control theory \cite{russell}, we are concerned with whether the entire hyperbolic domain ``sees" the boundary interface, across which dissipation is propagated. This is equivalent to asking that each point in the wave domain and the non-interface boundary $\Gamma_W$ is ``visible" from $\Gamma$, in a sense we will make precise. 

Our starting point for this discussion is the so-called {\em Geometric Optics Condition} or {\em Geometric Control Condition},  prominent in the theory of boundary control \cite{lt1,lt2}---see the detailed discussion in \cite[Section 1]{ZZ2} with regard to the heat-wave dynamics.
\begin{condition}[Geometric Optics Condition] \label{dotcond}
We say that $\Omega_W$ satisfies the {\em geometric optics condition} if there exists an $\mathbf x_0 \in \mathbb R^d$ so that for $\mathbf h(\mathbf x) = \mathbf x -\mathbf x_0$ we have the property
\[ \mathbf h \cdot \mathbf n \le 0\text{ on }\Gamma_W,\] where $\mathbf n$ here is the unit outward normal to $\Omega_W$.
\end{condition}
Star-complemented domains, which appear in classical works on boundary control of the wave equation \cite{russell}, are defined below and satisfy Condition \ref{dotcond}.
\begin{definition}[Star-Complemented Domain]
We say that the pair $(\Omega_W,\Gamma_W)$ is {\em star-complemented} iff there exists a star-shaped domain $\Omega^*$ so that:
$$ \Gamma_W \subseteq \partial\Omega^* ~\text{ and }~ \Omega_W \subseteq [\overline{\Omega^*}]^c.$$
\end{definition} 
\begin{figure} \begin{center}
\includegraphics[scale=.4]{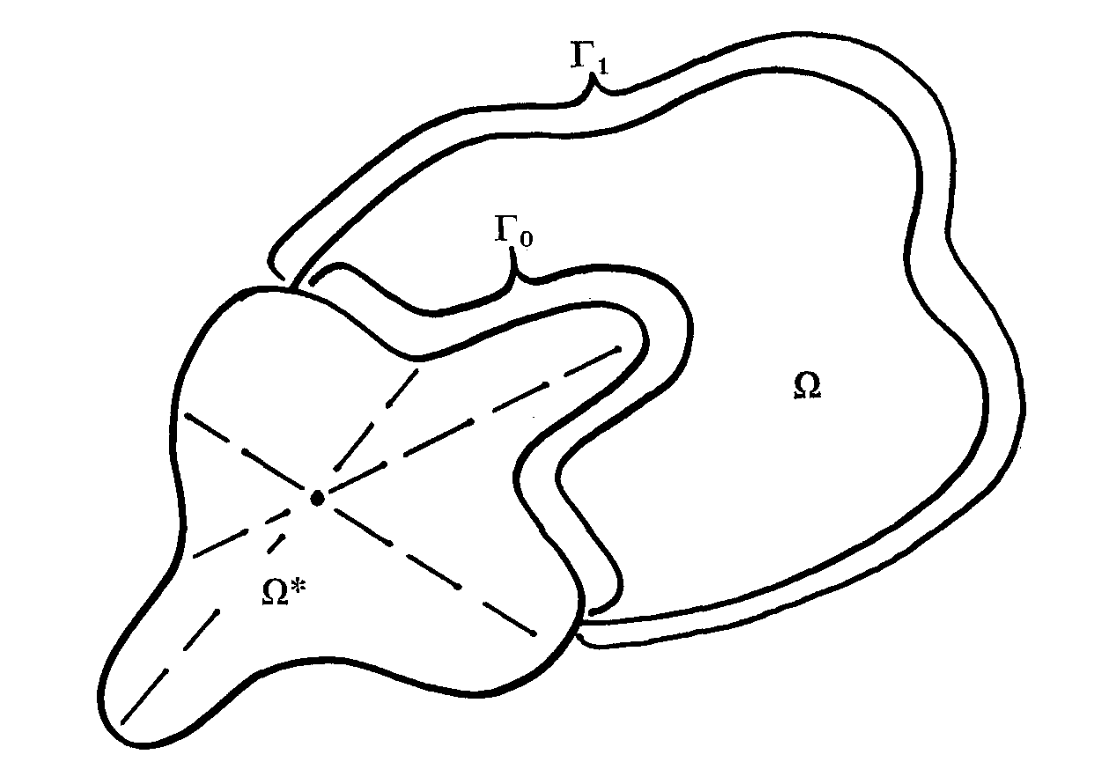} \caption{Star-complemented domain $\Omega$, as taken from \cite{russell}. Here, $\Gamma_0$ is the inactive (zero Dirichlet) portion of $\partial \Omega$ and $\Gamma_1$ is the active (or interface) portion.} \end{center} \end{figure}

In our work, we extend the above condition in two ways. First, more general vector fields will be included by replacing the vector field $\mathbf{h}$ by a general flow direction $\flow$ specifically tied to the geometry of the domain. These more general domains---described by flows---provide many interesting families of domains that may have singularities or nontrivial topologies. We provide several such examples below, and also in Appendix A. 
Secondly, using the particular features of time-periodic solutions, one can relax traditional optics conditions for a weaker condition, at the price of solution regularity. The central idea is that it is not necessary in time-periodic problems to produce traditional $H^1$  energy estimates of the wave equation as would be done for the corresponding Cauchy problem. In the time-periodic framework, a certain flexibility is introduced, permitting integration by parts in time on a full period $[0,T]$ without boundary terms in time; this permits the exchange of temporal derivatives for spatial derivatives and leads to a Poincar\'e-type condition for the wave domain. The latter includes domains such as boxes and hemispheres, which are excluded from the classical condition in Definition \ref{dotcond}.  A natural extension in this framework is first to consider a wave domain $\Omega_W$ which lies between two graphs.

\subsection{Graph and Flow Frameworks for $\Omega_W$}
Let us consider a general graph framework for domains $\Omega_W$ of interest. Take  an open set $\Omega'\subset\R^{d-1}$ and two 
 functions $\phi_1,\phi_2:\Omega'\to \R$, with $\phi_1$ continuous and $\phi_2$ upper semi-continuous given.
 \begin{condition}[Graph  Condition]\label{def:graph}
	If $\Omega_W \equiv \{(\mathbf x', y)\in \Omega'\times \R\,:\,\phi_1(\mathbf x')<  y < \phi_2(\mathbf x')\}$, and we can describe  $\Gamma\equiv \{(\mathbf x',y)\in \Omega'\times \R\,:\,\phi_1(\mathbf x')= y \}$ and $\Gamma_W\equiv\partial\Omega_W\setminus \overline{\Gamma}$, we then say that pair $(\Omega_W,\Gamma_W)$
	satisfies the {\em Graph  Condition}.
\end{condition}

\begin{figure}[htb]
	\centering
	\resizebox{!}{6cm}{%
		\tikzset{every picture/.style={line width=0.75pt}} 

\begin{tikzpicture}[x=0.75pt,y=0.75pt,yscale=-1,xscale=1]

\draw  [draw opacity=0][fill={rgb, 255:red, 220; green, 220; blue, 220 }  ,fill opacity=1 ] (259.51,157.42) -- (359.51,157.42) -- (359.51,229.42) -- (259.51,229.42) -- cycle ;
\draw [color={rgb, 255:red, 220; green, 220; blue, 220 }  ,draw opacity=1 ]   (259.36,157.12) -- (359.51,157.42) ;
\draw [draw opacity=0][fill={rgb, 255:red, 220; green, 220; blue, 220 }  ,fill opacity=1 ]   (168.95,273.42) .. controls (208.31,295.82) and (228.95,303.42) .. (268.95,273.42) .. controls (308.95,243.42) and (347.51,266.22) .. (365.91,302.22) .. controls (384.31,338.22) and (441.91,322.22) .. (459.51,258.22) .. controls (477.11,194.22) and (414.11,223.1) .. (348.49,226.45) .. controls (282.87,229.8) and (205.83,239.62) .. (168.95,233.5) ;
\draw  [color={rgb, 255:red, 220; green, 220; blue, 220 }  ,draw opacity=1 ][fill={rgb, 255:red, 220; green, 220; blue, 220 }  ,fill opacity=1 ] (448.4,245.9) .. controls (428.11,246.94) and (407.15,256.5) .. (385.82,258.22) -- (373.43,258.22) .. controls (368.8,257.81) and (364.16,256.88) .. (359.51,255.28) -- (359.51,229.42) .. controls (394.15,275.15) and (390.71,223.02) .. (459.51,229.42) -- (459.51,245.1) .. controls (455.71,245.45) and (451.94,245.72) .. (448.4,245.9) -- cycle ;
\draw  [color={rgb, 255:red, 255; green, 255; blue, 255 }  ,draw opacity=1 ][fill={rgb, 255:red, 255; green, 255; blue, 255 }  ,fill opacity=1 ] (359.51,225.66) .. controls (380.02,223.82) and (399.84,220.11) .. (416.52,218.22) -- (446.07,218.22) .. controls (452.12,219.58) and (456.74,222.25) .. (459.51,226.84) -- (459.51,229.42) .. controls (390.71,223.02) and (394.15,275.15) .. (359.51,229.42) -- (359.51,225.66) -- cycle ;
\draw [color={rgb, 255:red, 220; green, 220; blue, 220 }  ,draw opacity=1 ][fill={rgb, 255:red, 220; green, 220; blue, 220 }  ,fill opacity=1 ]   (259.51,157.42) .. controls (299.51,127.42) and (310.87,107.34) .. (359.51,157.42) ;
\draw [draw opacity=0][fill={rgb, 255:red, 220; green, 220; blue, 220 }  ,fill opacity=1 ]   (168.95,233.5) .. controls (208.95,203.5) and (219.51,227.42) .. (259.51,197.42) .. controls (299.51,167.42) and (367.51,217.74) .. (305.91,236.94) .. controls (244.31,256.14) and (169.11,291.34) .. (168.95,233.5) -- cycle ;
\draw  [draw opacity=0][fill={rgb, 255:red, 255; green, 255; blue, 255 }  ,fill opacity=1 ] (416.52,218.22) .. controls (429.62,216.65) and (440.91,216.07) .. (449.26,218.22) -- (416.52,218.22) -- cycle ;
\draw    (359.51,229.42) .. controls (393.91,275.02) and (390.71,223.02) .. (459.51,229.42) ;
\draw  [color={rgb, 255:red, 255; green, 255; blue, 255 }  ,draw opacity=1 ][fill={rgb, 255:red, 255; green, 255; blue, 255 }  ,fill opacity=1 ] (459.51,258.22) -- (459.51,226.84) .. controls (463.33,233.15) and (463.67,243.09) .. (459.51,258.22) -- cycle ;
\draw    (151.2,333.12) -- (494.96,333.12) ;
\draw [shift={(496.96,333.12)}, rotate = 180] [color={rgb, 255:red, 0; green, 0; blue, 0 }  ][line width=0.75]    (10.93,-3.29) .. controls (6.95,-1.4) and (3.31,-0.3) .. (0,0) .. controls (3.31,0.3) and (6.95,1.4) .. (10.93,3.29)   ;
\draw    (151.2,333.12) -- (151.2,104.2) ;
\draw [shift={(151.2,102.2)}, rotate = 90] [color={rgb, 255:red, 0; green, 0; blue, 0 }  ][line width=0.75]    (10.93,-3.29) .. controls (6.95,-1.4) and (3.31,-0.3) .. (0,0) .. controls (3.31,0.3) and (6.95,1.4) .. (10.93,3.29)   ;
\draw  [dash pattern={on 4.5pt off 4.5pt}]  (359.36,333.12) -- (359.36,157.12) ;
\draw  [dash pattern={on 4.5pt off 4.5pt}]  (259.36,333.12) -- (259.36,157.12) ;
\draw    (168.8,233.2) .. controls (208.8,203.2) and (219.36,227.12) .. (259.36,197.12) ;
\draw    (259.36,157.12) .. controls (299.36,127.12) and (310.72,107.04) .. (359.36,157.12) ;
\draw  [dash pattern={on 4.5pt off 4.5pt}]  (459.36,333.12) -- (459.36,229.12) ;
\draw    (168.8,273.12) .. controls (208.16,295.52) and (228.8,303.12) .. (268.8,273.12) .. controls (308.8,243.12) and (347.36,265.92) .. (365.76,301.92) .. controls (384.16,337.92) and (441.76,321.92) .. (459.36,257.92) ;
\draw  [dash pattern={on 4.5pt off 4.5pt}]  (168.8,333.2) -- (168.8,233.2) ;

\draw (486.44,353.07) node    {$x'\in \Omega '$};
\draw (131.77,103.59) node    {$x_{d}$};
\draw (307.83,278.39) node    {$\phi _{1}( x')$};
\draw (223.03,190.79) node    {$\phi _{2}( x')$};
\draw (301.58,222.26) node  [font=\large]  {$\Omega _{W}$};

\end{tikzpicture}
	}
	\caption{Graph setting.}
\end{figure}
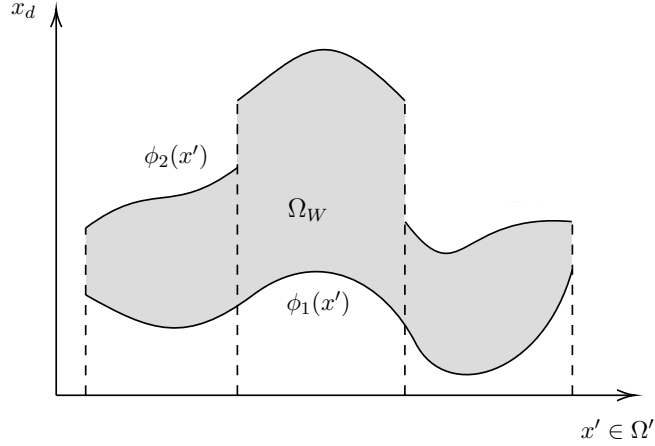
	
Another generalized setting replaces the {\em fixed} direction ${\bf h}={\bf x}- {\bf x}_0$ by a general vector field ${\bf b}:\Omega_W \to \R^d$ tailored to the domain $\Omega_W$. We start with the following definition.
\begin{definition}[Contractive Domain] \label{Contractive-Domain}
	We say that a domain $\Omega$ is {\em contractive} if there exists a Lipschitz vector field $\flow:\Omega\to \R^d$ and a  constant $C>0$ such that for almost every $\mathbf x\in\Omega$ and every $\bxi \in \R^d$ the flow $\flow$ satisfies
	\begin{align}
	\label{eq:contraction}
		\bxi^T \grad \flow(\mathbf{x}) \bxi \geq C \abs{\bxi}^2.
	\end{align} 
\end{definition}
 Observe that the above assumption provides immediately that $\diverg \flow \geq \frac{C}{2}$. In order to generalize the {\em  Geometric Optics Condition}, further assumptions on $\diverg \flow $ have to be assumed.
	\begin{condition}[Generalized Optics Condition] \label{def:GOC}
	We say that pair $(\Omega_W,\Gamma_W)$ satisfies the {\em Generalized Optics Condition} if it is a {\em contractive domain} with vector field $\flow$ and the following additional assumptions are satisfied
	\begin{align}\label{GOCSign}
	\flow\cdot \mathbf n\leq 0~\text{ on }~\Gamma_W
	\end{align}
	and further
	\begin{align}
	\label{eq:annoying}
	\Delta \diverg \flow \leq 0~\text{ in }~\Omega_W.
	\end{align}
\end{condition}
 
\begin{figure}[htbp]
	\centering
	\begin{minipage}[b]{0.4\textwidth}
		\centering
		\resizebox{!}{2cm}{%
			\tikzset{every picture/.style={line width=0.75pt}} 

\begin{tikzpicture}[x=0.75pt,y=0.75pt,yscale=-1,xscale=1]

\draw  [fill={rgb, 255:red, 0; green, 0; blue, 0 }  ,fill opacity=0.1 ] (280.8,475.65) -- (68.5,529.9) -- (68.5,421.4) -- cycle ;
\draw  [dash pattern={on 4.5pt off 4.5pt}]  (60.17,452.46) -- (280.79,475.65) ;
\draw  [dash pattern={on 4.5pt off 4.5pt}]  (59.1,467.91) -- (280.8,475.65) ;
\draw  [dash pattern={on 4.5pt off 4.5pt}]  (63,435.9) -- (280.79,475.65) ;
\draw  [dash pattern={on 4.5pt off 4.5pt}]  (59.11,483.39) -- (280.79,475.65) ;
\draw  [dash pattern={on 4.5pt off 4.5pt}]  (60.2,498.84) -- (280.79,475.65) ;
\draw  [dash pattern={on 4.5pt off 4.5pt}]  (62.36,514.17) -- (280.83,475.65) ;
\draw  [color={rgb, 255:red, 255; green, 255; blue, 255 }  ,draw opacity=1 ][fill={rgb, 255:red, 255; green, 255; blue, 255 }  ,fill opacity=1 ] (58.5,420.8) -- (67.5,420.8) -- (67.5,531.8) -- (58.5,531.8) -- cycle ;
\draw [line width=1.5]    (68.5,421.4) -- (68.5,529.9) ;
\draw  [fill={rgb, 255:red, 0; green, 0; blue, 0 }  ,fill opacity=1 ] (279.21,475.65) .. controls (279.21,474.77) and (279.92,474.06) .. (280.79,474.06) .. controls (281.67,474.06) and (282.38,474.77) .. (282.38,475.65) .. controls (282.38,476.53) and (281.67,477.24) .. (280.79,477.24) .. controls (279.92,477.24) and (279.21,476.53) .. (279.21,475.65) -- cycle ;

\draw (291.22,458.72) node    {$( 0,0)$};

\end{tikzpicture}
		}
		\caption{Triangular Domain, satisfying the Geometric Optics Condition. Described by $\flow = (x, y)$.}
	\end{minipage}
	\hspace{1cm}
	\hspace{-0.5cm}
	\begin{minipage}[b]{0.4\textwidth}
		\centering
		\resizebox{!}{2cm}{%
			\tikzset{every picture/.style={line width=0.75pt}} 

\begin{tikzpicture}[x=0.75pt,y=0.75pt,yscale=-1,xscale=1]

\draw  [color={rgb, 255:red, 255; green, 255; blue, 255 }  ,draw opacity=0 ][fill={rgb, 255:red, 0; green, 0; blue, 0 }  ,fill opacity=0.1 ] (493.2,692.8) .. controls (493.27,692.91) and (493.34,693.02) .. (493.41,693.12) -- (492.01,693.44) .. controls (447.09,658.23) and (343.44,625.76) .. (224.42,718.85) -- (206.55,639.58) .. controls (268.4,601.66) and (425.83,590) .. (493.2,692.8) -- cycle ;
\draw    (206.2,639.8) .. controls (271.7,602.3) and (425.7,589.8) .. (493.2,692.8) ;
\draw    (223.2,719.8) .. controls (347.7,623.8) and (446.7,659.8) .. (493.2,692.8) ;
\draw [line width=1.5]    (206.2,639.8) -- (223.2,719.8) ;
\draw  [dash pattern={on 4.5pt off 4.5pt}]  (220.29,705.4) .. controls (344.79,609.4) and (454.5,658.8) .. (493.2,692.8) ;
\draw    (206.78,640.59) -- (209.69,654.99) ;
\draw    (209.69,654.99) -- (212.6,669.4) ;
\draw    (212.6,669.4) -- (215.51,683.8) ;
\draw    (217.38,690.99) -- (220.29,705.4) ;
\draw    (220.29,705.4) -- (223.2,719.8) ;
\draw  [dash pattern={on 4.5pt off 4.5pt}]  (216.6,687.11) .. controls (343,599.3) and (463.5,659.3) .. (493.2,692.8) ;
\draw  [dash pattern={on 4.5pt off 4.5pt}]  (209.69,654.99) .. controls (275.19,617.49) and (422.5,602.8) .. (493.2,692.8) ;
\draw  [dash pattern={on 4.5pt off 4.5pt}]  (212.6,669.4) .. controls (278,631.3) and (412.5,606.3) .. (493.2,692.8) ;
\draw  [fill={rgb, 255:red, 0; green, 0; blue, 0 }  ,fill opacity=1 ] (491.61,692.8) .. controls (491.61,691.92) and (492.32,691.21) .. (493.2,691.21) .. controls (494.08,691.21) and (494.79,691.92) .. (494.79,692.8) .. controls (494.79,693.68) and (494.08,694.39) .. (493.2,694.39) .. controls (492.32,694.39) and (491.61,693.68) .. (491.61,692.8) -- cycle ;

\draw (466.32,703.27) node  [font=\large]  {$( 0,0)$};

\end{tikzpicture}
		}
		\caption{Horn Domain, satisfying the Generalized Optics Condition but not the Geometric Optics Condition. Described by $\flow = (\beta x, y)$, with $\beta > 0$.}
	\end{minipage}
\end{figure}
Figure 4 satisfies the Geometric Optics Condition \ref{dotcond}, while Figure 5 satisfies the Generalized Optics Condition \ref{def:GOC}. 
Finally, we can relax the above condition further by considering a generalization of the graph setting in Figure 6; the example in Figure 6 will satisfy our {\em Graph Optics Condition} \ref{def:GGC}, however relaxing the contractivity property. (In these examples, the dark boundary line represents $\Gamma$, the interface.)

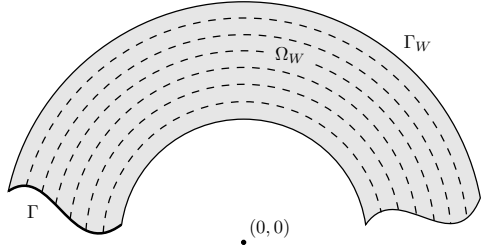
\begin{figure}[htb]
	\label{fig:arc}
	\centering
	\resizebox{!}{3.5cm}{%
		\tikzset{every picture/.style={line width=0.75pt}} 

\begin{tikzpicture}[x=0.75pt,y=0.75pt,yscale=-1,xscale=1]

\draw  [draw opacity=0][dash pattern={on 4.5pt off 4.5pt}] (191.13,209.56) .. controls (204.74,140.68) and (265.48,88.75) .. (338.35,88.75) .. controls (415.55,88.75) and (479.14,147.05) .. (487.48,222.03) -- (338.35,238.81) -- cycle ; \draw  [dash pattern={on 4.5pt off 4.5pt}] (191.13,209.56) .. controls (204.74,140.68) and (265.48,88.75) .. (338.35,88.75) .. controls (415.55,88.75) and (479.14,147.05) .. (487.48,222.03) ;  
\draw  [color={rgb, 255:red, 0; green, 0; blue, 0 }  ,draw opacity=0 ][fill={rgb, 255:red, 255; green, 255; blue, 255 }  ,fill opacity=1 ] (356.21,95.01) .. controls (356.21,88.6) and (364.27,83.4) .. (374.21,83.4) .. controls (384.15,83.4) and (392.21,88.6) .. (392.21,95.01) .. controls (392.21,101.42) and (384.15,106.62) .. (374.21,106.62) .. controls (364.27,106.62) and (356.21,101.42) .. (356.21,95.01) -- cycle ;
\draw  [fill={rgb, 255:red, 0; green, 0; blue, 0 }  ,fill opacity=0.1 ] (338.35,50.36) .. controls (430.55,50.36) and (507.28,116.57) .. (523.6,204.04) .. controls (523.54,204.19) and (523.48,204.33) .. (523.42,204.48) .. controls (502.92,254.73) and (460.43,188.48) .. (434.43,224.23) .. controls (434.24,224.48) and (434.05,224.73) .. (433.85,224.97) .. controls (427.14,178.24) and (386.94,142.32) .. (338.35,142.32) .. controls (289.82,142.32) and (249.66,178.14) .. (242.87,224.78) .. controls (242.8,224.83) and (242.74,224.88) .. (242.68,224.93) .. controls (202.77,254.86) and (188.24,175.78) .. (154.16,198.77) .. controls (172.52,113.92) and (248.01,50.36) .. (338.35,50.36) -- cycle ;
\draw  [draw opacity=0][dash pattern={on 4.5pt off 4.5pt}] (202.47,220.97) .. controls (211.22,153.73) and (268.72,101.79) .. (338.35,101.79) .. controls (406.71,101.79) and (463.37,151.85) .. (473.69,217.3) -- (338.35,238.81) -- cycle ; \draw  [dash pattern={on 4.5pt off 4.5pt}] (202.47,220.97) .. controls (211.22,153.73) and (268.72,101.79) .. (338.35,101.79) .. controls (406.71,101.79) and (463.37,151.85) .. (473.69,217.3) ;  
\draw  [draw opacity=0][dash pattern={on 4.5pt off 4.5pt}] (214.88,229.59) .. controls (219.59,165.52) and (273.07,115.01) .. (338.35,115.01) .. controls (398.15,115.01) and (448.05,157.41) .. (459.62,213.79) -- (338.35,238.81) -- cycle ; \draw  [dash pattern={on 4.5pt off 4.5pt}] (214.88,229.59) .. controls (219.59,165.52) and (273.07,115.01) .. (338.35,115.01) .. controls (398.15,115.01) and (448.05,157.41) .. (459.62,213.79) ;  
\draw  [draw opacity=0][dash pattern={on 4.5pt off 4.5pt}] (180.37,199.84) .. controls (197.83,128.82) and (261.94,76.14) .. (338.35,76.14) .. controls (423.33,76.14) and (493.09,141.3) .. (500.39,224.38) -- (338.35,238.81) -- cycle ; \draw  [dash pattern={on 4.5pt off 4.5pt}] (180.37,199.84) .. controls (197.83,128.82) and (261.94,76.14) .. (338.35,76.14) .. controls (423.33,76.14) and (493.09,141.3) .. (500.39,224.38) ;  
\draw  [draw opacity=0][dash pattern={on 4.5pt off 4.5pt}] (168.09,195.07) .. controls (187.53,119.17) and (256.39,63.07) .. (338.35,63.07) .. controls (429.11,63.07) and (503.8,131.87) .. (513.12,220.17) -- (338.35,238.81) -- cycle ; \draw  [dash pattern={on 4.5pt off 4.5pt}] (168.09,195.07) .. controls (187.53,119.17) and (256.39,63.07) .. (338.35,63.07) .. controls (429.11,63.07) and (503.8,131.87) .. (513.12,220.17) ;  
\draw  [draw opacity=0][dash pattern={on 4.5pt off 4.5pt}] (228.39,231.68) .. controls (232.07,174.16) and (279.89,128.63) .. (338.35,128.63) .. controls (391.15,128.63) and (435.27,165.77) .. (446.02,215.36) -- (338.35,238.81) -- cycle ; \draw  [dash pattern={on 4.5pt off 4.5pt}] (228.39,231.68) .. controls (232.07,174.16) and (279.89,128.63) .. (338.35,128.63) .. controls (391.15,128.63) and (435.27,165.77) .. (446.02,215.36) ;  
\draw [line width=1.5]    (154.16,198.77) .. controls (188.21,175.58) and (202.68,254.93) .. (242.68,224.93) ;
\draw  [fill={rgb, 255:red, 0; green, 0; blue, 0 }  ,fill opacity=1 ] (336.76,238.81) .. controls (336.76,237.94) and (337.47,237.23) .. (338.35,237.23) .. controls (339.22,237.23) and (339.93,237.94) .. (339.93,238.81) .. controls (339.93,239.69) and (339.22,240.4) .. (338.35,240.4) .. controls (337.47,240.4) and (336.76,239.69) .. (336.76,238.81) -- cycle ;

\draw (173.6,214.8) node    {$\Gamma $};
\draw (474.8,82.8) node    {$\Gamma _{W}$};
\draw (374.8,92.41) node    {$\Omega _{W}$};
\draw (358.82,227.67) node    {$( 0,0)$};

\end{tikzpicture}
	}
	\caption{Arc Domain. See Appendix~A for more discussion,~\ref{appendix:Arc-Construction}.}
\end{figure}

 \begin{condition}[Graph Optics Condition] \label{def:GGC}
	We say that pair $(\Omega,\Gamma_W)$ satisfies the {\em  Graph Optics Condition}, if there is a Lipschitz vector field $\flow:\Omega\to \R^d $satisfying
	\begin{itemize} 
		 \item inequalities \eqref{GOCSign} and \eqref{eq:annoying}, 
	\item 
	$ \bxi^T \grad \flow(\mathbf{x}) \bxi \geq C\abs{\bxi \cdot \flow(\mathbf x)}^2$ for $a.e.$ $\mathbf x$ and every $\bxi \in \mathbf R^d$,
	\item the following Poincar\'e type inequality:	for all $f \in H^1_{\Gamma_W}(\Omega_W)$
	\begin{align}\label{GPoincare}
	||f||_{L^2(\Omega_W)}^2 
			&\lesssim
			\int_{\Omega_W} (\grad f)^T \grad\flow \;\grad f \;
			+ 
			\norm{f}_{L^2(\Gamma)}^2- 
			\int_{\Gamma_W} (\flow\cdot \nrml) \abs{\pdn f}^2 \;.
	\end{align}

	\end{itemize}
	\end{condition}
Finally, we must introduce assumptions for some elliptic regularity, particularly relevant for the analysis of boundary triple points, where $\Gamma_W$, $\Gamma_H$, and $\Gamma$ have their closures meet. The following notation will be used in the sequel. We will say that $f\in H^{s^+}(\Omega_{\square})$ if there exists $\epsilon>0$ such that $f\in H^{s+\epsilon}(\Omega_{\square})$.
We now introduce these technical conditions on the wave domain, $\Omega_W$.
\begin{definition}\label{EllipticRegCond}
We say that a domain $\Omega_W$ has the {\em elliptic regularity property} w.r.t. the vector field $\flow(\mathbf x)$ if the following property holds:

Let $\tilde{\Gamma}_W=\{\mathbf x\in\partial \Omega_W:~\flow\cdot \mathbf n \leq 0\}$ and $\tilde{\Gamma}=\partial\Omega_W\setminus  \overline{\tilde{\Gamma}_W}$. Let {$f\in L^2(\Omega_W)$ and} $v$ be a unique solution to the following mixed boundary value problem for the Laplacian:
$$
\Delta v={f}\;{\rm in}\;\Omega_W,\quad v=0\;{\rm on}\; \tilde{\Gamma}_W,\quad \partial_{\mathbf n}v=0\;{\rm on}\; \tilde{\Gamma}.
$$
Then $v\in H^1_{\Gamma_W}(\Omega_W) \cap H^{\frac{3}{2}^+}(\Omega_W)$.
\end{definition}

\begin{definition}[E-property]\label{EProperty}
	We say that the Lipschitz domain $\Omega_W$ can be {\em approximated by the domains with the elliptic regularity property w.r.t. $\flow$}  if there exists a sequence of domains $\Omega_W^n$ satisfying the elliptic regularity property w.r.t. $\flow$ and 
	$$
	\chi_{\Omega_W^n}\to\chi_{\Omega_W}\;{\rm strongly\;in}\; L^p(\R^d),\;p<\infty,
	$$
	where $\chi_{\Omega_W^n}$ and $\chi_{\Omega_W}$ are the characteristic functions of $\Omega_W^n$ and $\Omega_W$, resp. In this case, we say that such a domain $\Omega_W$ has the {\em E-property} for short.
\end{definition}

\subsection{Remarks on Geometric Conditions}\label{geomarks} We now make several comments about the various conditions above that generalize the classical Geometric Optics Condition in Condition \ref{dotcond}.
	\begin{itemize}
		\item It is immediate that if $(\Omega_W,\Gamma_W)$ satisfies the Geometric Optic Condition in Definition \ref{dotcond}, then it also satisfies the Generalized Optics Condition in Definition \ref{def:GOC}. Namely, the linear flow $\mathbf b({\bf x})={\bf x}-{\bf x_0}$ trivially satisfies conditions \eqref{eq:contraction} and \eqref{eq:annoying}, while the sign condition \eqref{def:GOC} is the same in both definitions.
		\item Moreover, any  domain satisfying the Graph  Condition \ref{def:graph} also satisfies the Graph Optics Condition \ref{def:GGC}. To see this,  choose $c_0$ such that $\phi_2(\mathbf x')-c_0\leq 0$, $\mathbf x'\in\Omega'$, and $\flow=(0,...,0,x_d-c_0).$ Then the classical Poincar\'e inequality holds
		$$
		\|f\|_{L^2(\Omega_W)}\leq C\left (\norm{\partial_{x_d}f}_{L^2(\Omega_W)}+\norm{f}_{L^2(\Gamma))} \right ),
		$$
		which, together with the sign condition $\flow\cdot{\bf n}\leq 0$ on $\Gamma_W$, implies \eqref{GPoincare}.
		\item In our analysis, some elliptic regularity is needed for the wave component of the model. In particular, we will require interior $\Omega_W$ regularity of $H^{3/2^+}(\Omega_W)$. As such, we introduce the E-property, Definition \ref{EProperty}, above. We note that this property is immediately satisfied \cite{dauge} if the maximal wave dihedral angle of $\Omega_W$ is strictly less than $\pi$ at boundary triple points. This is to say that the E-property generally holds for domains heretofore considered. 
		\item Furthermore, any domains which satisfy the graph condition, Condition \ref{def:graph},  satisfy the E-property.  
	\end{itemize}
	
	\subsection{Instructive Geometric Example}
	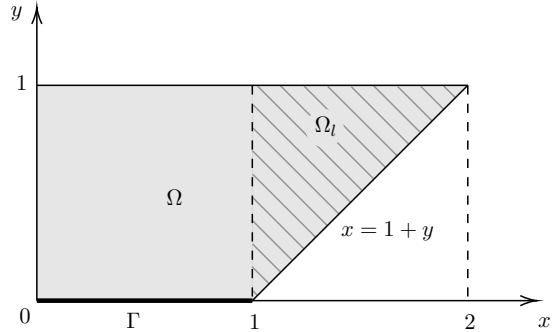
\begin{figure}[htb]
	\label{fig:trap}
	\centering
	\resizebox{!}{4.5cm}{%
		\tikzset{every picture/.style={line width=0.75pt}} 

\begin{tikzpicture}[x=0.75pt,y=0.75pt,yscale=-1,xscale=1]
	
	\draw  [draw opacity=0][fill={rgb, 255:red, 0; green, 0; blue, 0 }  ,fill opacity=0.1 ][line width=6]  (165.33,145.73) -- (306,145.73) -- (306,286.4) -- (165.33,286.4) -- cycle ;
	\draw [color={rgb, 255:red, 0; green, 0; blue, 0 }  ,draw opacity=0.35 ]   (339.82,146.22) -- (392.98,199.39) ;
	\draw [color={rgb, 255:red, 0; green, 0; blue, 0 }  ,draw opacity=0.35 ]   (309.49,146.3) -- (377.51,214.31) ;
	\draw [color={rgb, 255:red, 0; green, 0; blue, 0 }  ,draw opacity=0.35 ]   (323.97,145.57) -- (385.38,206.99) ;
	\draw  [color={rgb, 255:red, 255; green, 255; blue, 255 }  ,draw opacity=1 ][fill={rgb, 255:red, 255; green, 255; blue, 255 }  ,fill opacity=1 ] (341.23,173.8) .. controls (341.23,167.17) and (346.61,161.8) .. (353.23,161.8) .. controls (359.86,161.8) and (365.23,167.17) .. (365.23,173.8) .. controls (365.23,180.43) and (359.86,185.8) .. (353.23,185.8) .. controls (346.61,185.8) and (341.23,180.43) .. (341.23,173.8) -- cycle ;
	\draw  [color={rgb, 255:red, 0; green, 0; blue, 0 }  ,draw opacity=0 ][fill={rgb, 255:red, 0; green, 0; blue, 0 }  ,fill opacity=0.1 ][line width=6]  (446.67,145.73) -- (306,286.4) -- (306,145.73) -- cycle ;
	\draw    (165.33,286.4) -- (165.33,96.67) ;
	\draw [shift={(165.33,94.67)}, rotate = 90] [color={rgb, 255:red, 0; green, 0; blue, 0 }  ][line width=0.75]    (10.93,-3.29) .. controls (6.95,-1.4) and (3.31,-0.3) .. (0,0) .. controls (3.31,0.3) and (6.95,1.4) .. (10.93,3.29)   ;
	\draw    (165.33,286.4) -- (489.87,286.4) ;
	\draw [shift={(491.87,286.4)}, rotate = 180] [color={rgb, 255:red, 0; green, 0; blue, 0 }  ][line width=0.75]    (10.93,-3.29) .. controls (6.95,-1.4) and (3.31,-0.3) .. (0,0) .. controls (3.31,0.3) and (6.95,1.4) .. (10.93,3.29)   ;
	\draw    (306,286.4) -- (446.67,145.73) ;
	\draw    (165.33,145.73) -- (446.67,145.73) ;
	\draw  [dash pattern={on 4.5pt off 4.5pt}]  (446.67,145.73) -- (446.67,286.4) ;
	\draw  [dash pattern={on 4.5pt off 4.5pt}]  (306,145.73) -- (306,286.4) ;
	\draw [color={rgb, 255:red, 0; green, 0; blue, 0 }  ,draw opacity=0.35 ]   (306.09,188.5) -- (354.98,237.39) ;
	\draw [color={rgb, 255:red, 0; green, 0; blue, 0 }  ,draw opacity=0.35 ]   (306.24,203.85) -- (347.38,244.99) ;
	\draw [color={rgb, 255:red, 0; green, 0; blue, 0 }  ,draw opacity=0.35 ]   (305.77,218.57) -- (339.78,252.59) ;
	\draw [color={rgb, 255:red, 0; green, 0; blue, 0 }  ,draw opacity=0.35 ]   (306.04,234.05) -- (332.18,260.19) ;
	\draw [color={rgb, 255:red, 0; green, 0; blue, 0 }  ,draw opacity=0.35 ]   (306.22,249.42) -- (324.58,267.79) ;
	\draw [color={rgb, 255:red, 0; green, 0; blue, 0 }  ,draw opacity=0.35 ]   (305.87,264.27) -- (316.98,275.39) ;
	\draw [color={rgb, 255:red, 0; green, 0; blue, 0 }  ,draw opacity=0.35 ]   (306.57,173.77) -- (362.58,229.79) ;
	\draw [color={rgb, 255:red, 0; green, 0; blue, 0 }  ,draw opacity=0.35 ]   (306.57,158.57) -- (370.18,222.19) ;
	\draw [color={rgb, 255:red, 0; green, 0; blue, 0 }  ,draw opacity=0.35 ]   (355.17,146.37) -- (400.58,191.79) ;
	\draw [color={rgb, 255:red, 0; green, 0; blue, 0 }  ,draw opacity=0.35 ]   (370.64,146.65) -- (408.18,184.19) ;
	\draw [color={rgb, 255:red, 0; green, 0; blue, 0 }  ,draw opacity=0.35 ]   (385.22,146.02) -- (415.78,176.59) ;
	\draw [color={rgb, 255:red, 0; green, 0; blue, 0 }  ,draw opacity=0.35 ]   (400.94,146.55) -- (423.38,168.99) ;
	\draw [color={rgb, 255:red, 0; green, 0; blue, 0 }  ,draw opacity=0.35 ]   (416.04,146.45) -- (430.98,161.39) ;
	\draw [color={rgb, 255:red, 0; green, 0; blue, 0 }  ,draw opacity=0.35 ]   (430.98,146.19) -- (438.58,153.79) ;
	\draw [color={rgb, 255:red, 0; green, 0; blue, 0 }  ,draw opacity=0.35 ]   (306.37,279.97) -- (309.38,282.99) ;
	\draw [line width=2.25]    (165.33,286.4) -- (306,286.4) ;
	
	\draw (157.56,296.53) node    {$0$};
	\draw (307.56,300.53) node    {$1$};
	\draw (448.23,300.53) node    {$2$};
	\draw (496.89,300.53) node    {$x$};
	\draw (152.23,98.53) node    {$y$};
	\draw (155.56,143.87) node    {$1$};
	\draw (394.23,241.2) node    {$x=1+y$};
	\draw (255.29,218.8) node    {$\Omega $};
	\draw (354.14,172.45) node    {$\Omega _{l}$};
	\draw (228.49,299.2) node    {$\Gamma $};

\end{tikzpicture}
	}
	\caption{Trapezoid Domain.}
\end{figure}
	We now produce an example domain which does not fall into any of our geometric frameworks. Consider the domain $\Omega=\{(x,y)\in [0,2]\times[0,1]\, :\,0\leq x\leq 1+y\}$ with $\Gamma=\{(x,0):0\leq x\leq 1\}$ as is sketched in Figure~\ref{fig:trap}). Then, no field $\mathbf b$ can exist satisfying the Generalized Optics Condition.
Define $\Omega_\ell=\{(x,y)\in [1,2]\times[0,1]\, :\,1\leq x\leq 1+y\}$.
Now observe that the sign condition on $\Gamma_W$ implies that ${b}_1(1+y,y)-{b}_2(1+y,y)\leq 0$ and $-{b}_1(0,y)\leq 0$ for $y\in [0,1]$ and ${b}_2(x,1)\leq 0$, where $b_j$ are the components of $\mathbf b$. Now we find that
\begin{align*}
	0 
	&<\int_{\Omega} \partial_x {b}_1\, + \int_{\Omega_\ell} \partial_y {b}_2\, 
	\\
	& =\int_0^1 \left[\frac{{b}_1(1+y,y)-{b}_2(1+y,y)}{\sqrt{2}}-{b}_1(0,y)\right] \, dy +\int_1^2{b}_2(x,1)\, dx,
\end{align*}
which leads to a contradiction.
\begin{remark}
	Please observe that the above example can easily be modified to produce an arbitrarily smooth domain. Actually, any domain that ``enlarges" a graph domain (expands, rather than contracts) will violate Definition \ref{def:GOC}, and, as such, will not be covered by our theorems here.
	\end{remark}

\section{Hyperbolic Estimates}\label{Sec:Hyp}
In this section, we derive estimates for an appropriately smooth solution to an overdetermined wave equation. This problem is formally obtained by assuming the heat component $u$ of the system \eqref{sys2}--\eqref{sys2e} is known, while we solve the system for $w$. We emphasize that this overdetermined problem is generally ill-posed. Thus, we do not address the existence of solutions but instead focus on deriving suitable a priori estimates for a given solution. These estimates will later be applied in constructing a solution to the full system \eqref{sys2}--\eqref{sys2e} in Section \ref{sec:construction}.

\subsection{Invariances}\label{Sec:invariances}
It will be necessary, to produce a priori estimates, to exploit certain invariances. These have also arisen in previous works \cite{bostan,casanova}. What we invoke below allows us to integrate the wave solution in time, and, as such, will later permit weaker notions of solutions. 
We first define
\begin{equation}\label{mean}
\mean{f}(\mathbf x)\equiv\frac{1}{T}\int_0^Tf(t,\mathbf x)~~\text{ and }~~\tilde{f}(t,\mathbf x)=f(t,\mathbf x)-\mean{f}(\mathbf x)
\end{equation}
Now observe that on $\Gamma$ we have, through the coupling conditions that $u\big|_{\Gamma} = w_t\big|_{\Gamma},$ when both are defined. Integrating across the interface, we have 
$$\int_0^T u = \int_0^Tw_t=w(T)-w(0)=0,$$
when the function $w$ is $T$-periodic.
 
Thus we  find, by the time-periodicity of $w$, that $\mean{u|_{\Gamma}}=\mean{\partial_t w|_{\Gamma}}=0$, for appropriately smooth solutions.  Hence, we obtain the formal, decoupled equation for the heat component, which can be uniquely solved: 
\begin{align}\label{MeanValueHeatEq}
	-\Delta \mean{u} = \mean{f},\quad \mean{u}=0\text{ on }\partial \Omega_H.
\end{align}
Indeed, we have $\langle u\rangle = A^{-1}\langle f\rangle \in H^1_0(\Omega_H)$ with $\langle f \rangle \in L^2(\Omega_H)$, where $A=-\Delta_D$ yields an isomorphism between $H^1_0(\Omega_H) \to H^{-1}(\Omega_H)$.  Thence  we obtain $\gamma_1[u] \in H^{-1/2}(\partial \Omega_H)$, which can be restricted (in the sense of distributions) to interpret $\partial_{\mathbf n}u|_{\Gamma} \in H^{-1/2}(\Gamma)$. With the normal in hand, we can consider the wave component:
\begin{align}\label{meanw}
	-\Delta \mean{w} = \mean{g},\quad \mean{w}=0\text{ on }\Gamma_W, \text{ and } \partial_{\mathbf n} \mean{w}=\partial_{\mathbf n}\mean{u} \text{ on }\Gamma,
\end{align}
which uniquely determines $\mean{w}$ in an analgous way, albeit with mixed boundary conditions. This yields that the mean value of both solution components are always determined via  decoupling.
Furthermore, the mean-value free part, satisfies the following equation

\begin{align}
\label{meanfree}
\begin{aligned}
	{\tilde{w}}_{tt} -\Delta \tilde{w} &= \tilde{g} \text{ on }[0,T]\times \Omega_W,\quad \tilde{w}(0)=\tilde{w}(T)
	\\
	{\tilde{u}}_t -\Delta \tilde{u} &= \tilde{f}\text{ on }[0,T]\times \Omega_H,\quad \tilde{u}(0)=\tilde{u}(T)
	\\
	\tilde{w}_t&= w_t=u=\tilde{u}\text{ on }[0,T]\times \Gamma
	\\
	\partial_{\mathbf n}\tilde{w}&=\partial_{\mathbf n} (w-\mean{w})=\partial_{\mathbf n} (u-\mean{u})=\partial_{\mathbf n}\tilde{u}\text{ on }[0,T]\times \Gamma.
	\end{aligned}
\end{align}

\subsection{The Hyperbolic Estimate}
In this Section we consider the following over-determined initial-boundary value problem for the wave equation with a right hand side $g\in L^2(0,T;L^2(\Omega_W))$. We will consider boundary condition $h\in L^2(0,T;H^1(\Gamma))$ and first assume $\mean{g}\equiv 0\equiv \mean{h}$. {  We will later  utilize a function $g$ with the property that $\mean{g}\neq 0$; this function will be estimated a posteriori from \eqref{meanw} via elliptic theory. In explicit we consider solutions $w$ to} 
\begin{align}
	\label{weq}
	\begin{aligned}
		w_{tt} -\Delta w &= g\text{ on }[0,T]\times \Omega_W,\quad w(0)=w(T), ~{  w_t(0)=w_t(T)}
		\\
		w_t&= h \text{ and }\partial_{\bn} w= 0 \text{ on }[0,T]\times \Gamma,
		\\
		w &=0\text{ on }\Gamma_W.
	\end{aligned}
\end{align}
From \eqref{weq} of the previous section we always have that $\mean{w}=0$ on $\Gamma$ by time-periodicity. Thus from the fundamental theorem of calculus we have
$$
w(t,x)=\int_{s}^t h(\sigma,x)\, d\sigma+w(s,x),\quad s\in [0,T],\; x\in\Gamma
$$
By taking the mean value, we obtain the identification
\begin{align}\label{PrimitiveH}
w(t,x)=&~ \frac{1}{T}\int_0^T\int_s^th(\sigma,x)\, d\sigma\, ds
\end{align}
from which we define $H(t,x) \equiv w(t,x)$ for use as a boundary condition below.
Thus, the solution $w$ of \eqref{weq} also satisfies the overdetermined system (with no reference to $w_t$ on $\Gamma$):
\begin{align}
	\label{weqP}
	\begin{aligned}
		w_{tt} -\Delta w &= g\text{ on }[0,T]\times \Omega_W,\quad w(0)=w(T)
		\\
		w&= H \text{ and }\partial_{\mathbf n} w= 0 \text{ on }[0,T]\times \Gamma,
		\\
		w &=0\text{ on }\typo{\Gamma_W}{[0, T]\times \Gamma_W}.
	\end{aligned}
\end{align}

For the above system we have the  estimate below. We note that the estimate is stated for additional regularity, as it is of interest in its own right. Subsequently, through our geometric assumptions, we will obtain the requisite regularity for constructed approximate solutions, leading to our final result for weak solutions.

\begin{lemma}\label{KeyEstimateLemma} 
Let $\Omega_W$ be a Lipschitz domain. Assume that $\flow\in C^{3,1}(\Omega_W)$, and take data $h\in L^2((0,T)\times\Gamma)$, $g\in L^2((0,T)\times\Omega_W)$, with $\mean{g}\equiv 0\equiv \mean{h}$.
 Then any solution $w$ to \eqref{weq}  with  $w\in L^2(0,T; H^1_{\Gamma_W}(\Omega_W)\cap H^{3/2^+}(\Omega_W))$ and  $\Delta w \in L^2(0,T;L^2(\Omega_W))$,  satisfies the identity
	\begin{align*}
		&\inttime \int_{\Omega_W} 
		(\grad w)^T \grad \flow \grad w\, 
		+ \inttime \int_{\partial\Omega_W} \left( \frac{1}{2}\; \abs{\grad w}^2 \; \flow\cdot \nrml - (\grad w\cdot \nrml) (\grad w\cdot\flow) \right) \, 
		\\
		&= \inttime \int_{\Omega_W} \left( g  (\nabla w\cdot \mathbf b) + \frac{1}{2}\; g w \diverg\flow  +\frac{\abs{w}^2}{4} \Delta\diverg \flow\right)\, 
				+  \inttime \intgamm \left[ \frac{\abs{h}^2}{2} \flow \cdot \nrml { -}\frac{\abs{H}^2}{4}\partial_\nrml \diverg\flow\right],
	\end{align*}
	recalling that $\partial_tH=h$. 
\end{lemma}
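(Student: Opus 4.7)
The identity is a space-time Rellich--Morawetz type identity, and my plan is to derive it by testing the wave equation $w_{tt} - \Delta w = g$ with two complementary multipliers: the standard Rellich multiplier $\flow \cdot \nabla w$ and an auxiliary ``energy-correcting'' multiplier $\tfrac{1}{2} w \diverg \flow$. The essential structural feature exploited throughout is the $T$-periodicity of $w$ and $w_t$: all temporal endpoint contributions from integrations by parts in $t$ vanish, which is what ultimately makes the two multiplier identities combinable on the whole time interval $[0,T]$ without reference to initial or terminal data.

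Multiplying by $\flow \cdot \nabla w$ and integrating over $[0,T]\times \Omega_W$, the $w_{tt}$ contribution becomes $-\inttime \int_{\Omega_W} w_t \,(\flow\cdot \nabla w_t) = -\inttime \int_{\Omega_W} \flow \cdot \nabla(\tfrac{1}{2}|w_t|^2)$ after integration by parts in $t$, and a subsequent spatial divergence theorem produces the bulk term $\inttime \int_{\Omega_W} \tfrac{1}{2}|w_t|^2 \diverg \flow$ and the boundary term $-\inttime \int_{\partial\Omega_W} \tfrac{1}{2}|w_t|^2 \flow \cdot \nrml$. The $-\Delta w$ contribution is handled by the classical Rellich calculation, using $\nabla(\flow \cdot \nabla w) \cdot \nabla w = (\nabla w)^T \nabla \flow \,\nabla w + \flow \cdot \nabla(\tfrac{1}{2}|\nabla w|^2)$ together with Green's theorem; this yields the principal interior quadratic form $\int_{\Omega_W} (\nabla w)^T \nabla \flow \,\nabla w$ and the principal boundary form $\int_{\partial\Omega_W}[\tfrac{1}{2}|\nabla w|^2 \flow\cdot\nrml - \pdn w \,(\flow \cdot \nabla w)]$, alongside a bulk remainder $-\int_{\Omega_W} \tfrac{1}{2}|\nabla w|^2 \diverg \flow$. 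The assembled first identity therefore carries an obstructive energy-density residual $\inttime \int_{\Omega_W} \tfrac{1}{2}\diverg \flow\,(|w_t|^2 - |\nabla w|^2)$, which the second multiplier is engineered to cancel. Testing with $\tfrac{1}{2} w \diverg \flow$, the time IBP (again using periodicity) gives $-\inttime \int_{\Omega_W} \tfrac{1}{2}|w_t|^2 \diverg \flow$, while two successive spatial Green's identities---the second rewriting $\tfrac{1}{2} w \nabla w \cdot \nabla \diverg \flow = \nabla(\tfrac{w^2}{4}) \cdot \nabla \diverg \flow$ and transferring the Laplacian onto $\diverg \flow$---produce the target interior $\tfrac{w^2}{4}\Delta\diverg \flow$ and boundary $\tfrac{w^2}{4}\, \pdn \diverg \flow$ pieces. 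Summing the two identities with the appropriate signs causes the $\diverg \flow$-weighted energy density to cancel exactly, producing on the right-hand side the stated bulk forcing structure $g\,(\flow\cdot\nabla w)+\tfrac{1}{2} g w \diverg \flow + \tfrac{w^2}{4}\Delta\diverg \flow$.

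The last step is to localize the remaining boundary integrals using $\partial\Omega_W = \Gamma \cup \Gamma_W$ and the overdetermined boundary conditions. On $\Gamma_W$ the condition $w \equiv 0$ enforces $w_t = 0$ and $\nabla w = (\pdn w)\nrml$, so every boundary integrand apart from the principal quadratic form---which belongs on the left-hand side---vanishes there. On $\Gamma$, the conditions $w_t = h$, $\pdn w = 0$, and $w = H$ (as given by \eqref{PrimitiveH} together with the mean-free calculation of Section~\ref{Sec:invariances}) annihilate precisely the cross terms involving $\pdn w$ and leave the stated $|h|^2$ and $|H|^2$ contributions on $\Gamma$. The main technical obstacle is rigorously justifying these integrations by parts at the low regularity $w \in L^2(0,T;H^{3/2^+}(\Omega_W))$ with $\Delta w \in L^2(L^2)$ on a Lipschitz domain, where the boundary traces of $\nabla w$ are only marginally defined. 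I would therefore first establish the identity for smooth approximants $w^\varepsilon$ obtained by periodic temporal mollification (as in the uniqueness proof of Theorem~\ref{th:uniqueness}) and a spatial regularization compatible with the overdetermined boundary conditions, and then pass to the limit using the $L^2(\partial \Omega_W)$ strong convergence of $\nabla w^\varepsilon$ which the $H^{3/2^+}$ hypothesis guarantees through standard Lipschitz trace theorems. The assumed $C^{3,1}$ regularity of $\flow$ renders $\diverg \flow$, $\nabla \diverg \flow$, $\Delta \diverg \flow$, and $\pdn \diverg \flow$ uniformly bounded, so no additional difficulty arises from the coefficient side of the identity.
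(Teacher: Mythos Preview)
Your proposal is correct and follows essentially the same route as the paper: the two multipliers $\flow\cdot\nabla w$ and $\tfrac{1}{2}\,w\,\diverg\flow$ (the paper uses $w\,\diverg\flow$ and then halves the resulting identity, which is the same thing), the use of $T$-periodicity to kill temporal endpoint terms, the Rellich computation for the Laplacian, and the second integration by parts transferring $\nabla\diverg\flow$ onto $w^2/4$ all match the paper's derivation line by line. The paper likewise justifies the spatial integrations by parts via density for functions in $H^{3/2^+}(\Omega_W)\cap H^1_{\Gamma_W}(\Omega_W)$ with $\Delta w\in L^2(\Omega_W)$, exactly as you propose.
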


\begin{proof} The identity is obtained utilizing weak solutions and additional regularity; namely, with $g, \Delta w \in L^2(0,T;L^2(\Omega_W))$, we obtain from the equation that $w \in H^2(0,T; L^2(\Omega_W))$. Moreover, all traces appearing in the identity are well defined with $w \in H^{3/2+\epsilon}(\Omega_W)$. 

	We obtain an a priori estimate in two steps. First, we take the derivative of $w$ in the direction of vector field $\flow$ as a test function and, in the second step, we combine the obtained identity with an equipartition-type identity. 
	\begin{align}\label{multident1}
		\inttime \int_{\Omega_W} w_{tt} \pdb w \,  - \inttime \int_{\Omega_W} \Delta w\; \pdb w \, 
				= \inttime \int_{\Omega_W} g \pdb w \, 
	\end{align}
	We rewrite both terms on the left hand side.
	\begin{align}
		\inttime \int_{\Omega_W} w_{tt} \pdb w \, &=
		- \inttime \int_{\Omega_W} \frac{1}{2} \;\pdb \abs{{w}_t}^2\, 
		+ \left[\int_{\Omega_W} {w}_t \typo{\pdb w\, dx}{\pdb w}\right]_{t=0}^{t=T}
		\\
		&= \frac{1}{2} \inttime \int_{\Omega_W} \abs{{w}_t}^2 \diverg{\flow} \, 
		- \frac{1}{2} \inttime \intgamm \abs{h}^2 \;\flow \cdot \nrml\, 
	\end{align}
	Above, we have utilized the time regularity and
	 time periodicity of the solution to eliminate terms at $t=0,T$, homogeneous Dirichlet conditions on $\Gamma_W$, and we invoke the boundary condition
 $w_t = h$ on $\Gamma$.
	
	For the Laplacian term, we can approximate by smooth functions. We consider the calculations  as follows
	\begin{align}
		-\int_{\Omega_W} \Delta w\; \pdb w \,  &= \int_{\Omega_W} \grad w \cdot \grad \pdb w \,   - \intbndwave \pdb w \; \grad w \cdot \nrml \, 
		\\ &= \int_{\Omega_W} \grad w \cdot  \Big(\pdb \grad w + \grad \flow \grad w \Big)\,   - \intbndwave \pdb w \; \grad w \cdot \nrml\, 
		\\ &= \int_{\Omega_W} \frac{1}{2}\; \pdb \abs{\grad w}^2 +  (\grad w)^T \grad \flow \grad w \,  
		- \intbndwave  (\grad w)^T \nrml \flow^T \grad w \,  
	\end{align}
	and the first term can be simplified by the divergence theorem
	\begin{align}
		\int_{\Omega_W} \frac{1}{2}\; \pdb \abs{\grad w}^2 \,   = - \frac{1}{2} \int_{\Omega_W} \abs{\grad w}^2 \diverg\flow  \,  
		+ \frac{1}{2}\int_{\partial \Omega_W}  \abs{\grad w}^2 \; \flow\cdot \nrml\, 
	\end{align}
	Thus we obtain the identity:
	\begin{align}\nonumber
		-\int_{\Omega_W} \Delta w\; \pdb w \,  =&~- \frac{1}{2} \int_{\Omega_W} \big(\abs{\grad w}^2 \diverg\flow  \, +  (\grad w)^T \grad \flow \grad w \,\big) \\ &
		+ \frac{1}{2}\int_{\partial \Omega_W}  \big(\abs{\grad w}^2 \; \flow\cdot \nrml\  
		-  (\grad w)^T \nrml \flow^T \grad w\big)
		\end{align}
		We now note that this identity is then justified via density for any $w \in H^{3/2+\epsilon}(\Omega_W)\cap H^1_{\Gamma_W}(\Omega_W)$ with $\Delta w \in L^2(\Omega_W)$. 
	
So, from the above we obtain that \eqref{multident1} becomes: { 
	\begin{align}\begin{aligned}
		\frac{1}{2} \inttime \int_{\Omega_W} \abs{{w}_t}^2 \diverg{\flow}\,  
		- \frac{1}{2} \inttime \int_{\Omega_W}  \abs{\grad w}^2 \diverg\flow 
		&+\inttime \int_{\Omega_W} (\grad w)^T \grad \flow \grad w \,  
		\\
		+ \inttime \int_{\partial\Omega_W} \left( \frac{1}{2}\;  \abs{\grad w}^2 \; \flow\cdot \nrml - (\grad w)^T \nrml \flow^T \grad w \right)\, 
		&= \inttime \int_{\Omega_W} g \pdb w \,  
		+ \frac{1}{2} \inttime \intgamm \abs{h}^2 \;\flow \cdot \nrml. 
		\end{aligned}
		\label{test-by-flow-derivative}
	\end{align}}
This identity will be combined with the following equipartition-type identity, using $w \diverg \flow$ as a test function. This yields  
	\begin{align}
		\inttime \int_{\Omega_W}\left[ w_{tt} w \diverg \flow -  \Delta w\; w \diverg \flow  \, \right]
				= \inttime \intwave g  w \diverg \flow \,  .
	\end{align}
	We rewrite these terms as follows, starting with time derivative
	\begin{align}
		\inttime \intwave w_{tt} w \diverg \flow \,  =
		- \inttime \intwave \abs{w_t}^2 \diverg\flow \,  ,
	\end{align}
	where the $t=0,T$ terms vanish, thanks to periodicity. The second term becomes
	\begin{align}
		- \intwave \Delta w\; w \diverg \flow \,  
		&= \intwave \grad w \cdot \grad (w\diverg\flow ) \,   - \intbndwave \grad w \cdot \nrml\; w\diverg\flow \, 
		\\ 
		&= \intwave \abs{\grad w}^2 \;\diverg\flow \,  
		+ \intwave w  \grad w \cdot \grad \diverg\flow \,  ,
	\end{align}
	{  where we have invoked the boundary conditions from the overdetermined problem \eqref{weqP} to eliminate the boundary term on the right hand side above.}
The last term can be integrated by parts again, obtaining
    \begin{align}
        \intwave w  \grad w \cdot \underset{=:\; \textbf{d}}{\underbrace{\grad \diverg\flow}} \,   &=
		{ \frac{1}{2}\int_{\Omega_W}\nabla|w|^2\cdot \textbf{d}\, }
        \\ &=
        -\frac{1}{2} \intwave \abs{w}^2 \diverg\textbf{d}\,  
        + \frac{1}{2}\intbndwave \abs{w}^2 \;\textbf{d}\cdot\nrml\, 
        \\ &=
        -\frac{1}{2} \intwave \abs{w}^2 \diverg\textbf{d}\,  
        + \frac{1}{2}\int_\Gamma \abs{w}^2 \;\textbf{d}\cdot\nrml\,.
    \end{align}
  where we recall that $w = 0$ on $\Gamma_W$. 
	Hence we find
	\begin{align} \nonumber
		&- \inttime \intwave \abs{w_t}^2 \diverg\flow 
		+ \inttime\intwave \abs{\grad w}^2 \;\diverg\flow 
    -\frac{1}{2} \inttime\intwave \abs{w}^2 \Delta\diverg \flow
            + \frac{1}{2}\inttime\int_\Gamma \abs{w}^2 \; \partial_\nrml\diverg\flow\, 
            \\
		&= \inttime \intwave g  w \diverg \flow.\label{equipartition1}
	\end{align}
	Now, we multiply the above equation by $1/2$ and add it to the identity in \eqref{test-by-flow-derivative}. This leads to the desired identity, after invoking the given data in the problem.
	\begin{align}
	\begin{aligned}
	\label{eq:key}
		&\inttime \intwave 
		(\grad w)^T \grad \flow \grad w\, 
		+ \inttime \intbndwave \left( \frac{1}{2}\; \abs{\grad w}^2 \; \flow\cdot \nrml - (\grad w)^T \nrml \flow^T \grad w \right) \, 
		\\
		&= \inttime \intwave \left( g \pdb w + \frac{1}{2}\; g w \diverg\flow  +\frac{\abs{w}^2}{4} \Delta\diverg \flow\right)\, 
		+  \inttime \intgamm \left (\frac{\abs{h}^2}{2} \flow \cdot \nrml { -}\frac{\abs{H}^2}{4}\partial_\nrml \diverg\flow \right )\,.
		\end{aligned}
	\end{align}
		\end{proof}
	\begin{remark} The calculations above mirror the classical stabilization of the wave equation from the Neumann boundary condition, adapted to the framework here---see \cite{lt1,lt2}.\end{remark}

\begin{proposition}\label{prop:WaveEst}
	Let the assumptions of Lemma \ref{KeyEstimateLemma} be satisfied, and in addition, assume the integrated data has $H\in H^1((0,T)\times\Gamma)$. 
	\begin{enumerate}
		\item If $(\Omega_W,\Gamma_W)$ satisfies the Generalized Optics Condition \ref{def:GOC}, then the following a priori estimate holds for weak solutions to \eqref{weqP}:
		\begin{equation}\label{est1}
	\norm{w}_{W^{1,\infty}(0,T;L^2(\Omega_W))}+\norm{w}_{L^\infty(0,T;H^1(\Omega_W))}
	\lesssim \norm{g}_{L^2((0,T)\times\Omega_W)}+\norm{H}_{H^1((0,T)\times\Gamma)}
	\end{equation}
		\item If $(\Omega_W,\Gamma_W)$ satisfies the  Graph Optics Condition \ref{def:GGC}, then the following estimate holds:
		\begin{equation}\label{est2}
		\norm{w}_{L^2((0,T)\times \Omega_W))}+\norm{\partial_{\bf b} w}_{L^2((0,T)\times \Omega_W)}
		\lesssim \norm{g}_{L^2((0,T)\times \Omega_W)}+\norm{H}_{H^1((0,T)\times\Gamma)}.
		\end{equation}
	\end{enumerate}
	The constants associated to $\lesssim$ above depend on $\mathbf b$ and its norms.
	\end{proposition}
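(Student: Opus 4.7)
The starting point is the master identity in Lemma \ref{KeyEstimateLemma}. My plan is to convert this identity into an inequality of the form ``(positive quantity in $w$) $\lesssim$ (data)'' by exploiting the sign structure of each term: (a) the assumptions on $\flow$ (contractivity, the sign $\flow\cdot\nrml\le 0$ on $\Gamma_W$, and $\Delta\diverg\flow\le 0$), and (b) the overdetermined traces in \eqref{weqP} ($w=0$ on $\Gamma_W$; $w=H$ and $\partial_\nrml w=0$ on $\Gamma$). Once an $L^2(0,T;H^1)$-type bound is in hand, I bootstrap via the classical wave energy identity to reach the $L^\infty$ bounds of Part 1; for Part 2, the weaker contractivity forces me to invoke the tailored Poincar\'e inequality \eqref{GPoincare} in place of a true $H^1$ bound.

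The boundary contributions from the identity simplify dramatically under the overdetermined conditions, and this simplification is used identically in both parts. On $\Gamma_W$, since $w\equiv 0$, the gradient is purely normal and
\[
\tfrac{1}{2}|\grad w|^2\,\flow\cdot\nrml - (\grad w\cdot\nrml)(\grad w\cdot\flow) = -\tfrac{1}{2}|\partial_\nrml w|^2\,\flow\cdot\nrml \ge 0
\]
by \eqref{GOCSign}, so this term helps on the left-hand side. On $\Gamma$, since $\partial_\nrml w=0$ and $w=H$, one has $\grad w = \grad_\tau H$, and the $\partial\Omega_W$-boundary contribution over $\Gamma$ reduces to $\tfrac12|\grad_\tau H|^2\flow\cdot\nrml$, absorbed by $\|H\|_{L^2(0,T;H^1(\Gamma))}^2$. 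The explicit $\Gamma$-terms $\tfrac{|h|^2}{2}\flow\cdot\nrml$ and $\tfrac{|H|^2}{4}\partial_\nrml\diverg\flow$ are controlled by $\|H\|_{H^1((0,T)\times\Gamma)}^2$, since $h=\partial_t H$. Finally, the bulk term $\tfrac{|w|^2}{4}\Delta\diverg\flow$ on the right is $\le 0$ by \eqref{eq:annoying} and may be moved to the left with a good sign.

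For Part 1, contractivity \eqref{eq:contraction} gives $(\grad w)^T\grad\flow\,\grad w \ge C|\grad w|^2$ in the bulk. Treating the forcing terms $\int g\,\pdb w$ and $\int g w\diverg\flow$ by Young's inequality, and using Poincar\'e (thanks to $w=0$ on $\Gamma_W$) to dominate $\|w\|_{L^2}$ by $\|\grad w\|_{L^2}$, I absorb the small quantities on the right to obtain $\|w\|_{L^2(0,T;H^1_{\Gamma_W}(\Omega_W))}^2\lesssim \|g\|_{L^2}^2+\|H\|_{H^1}^2$. The equipartition-type identity \eqref{equipartition1} extracted in the proof of Lemma \ref{KeyEstimateLemma} then supplies the matching $L^2(0,T;L^2)$ bound on $w_t$. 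To upgrade to $L^\infty(0,T;H^1)\cap W^{1,\infty}(0,T;L^2)$, I multiply the equation by $w_t$: the boundary contributions $\int_{\partial\Omega_W}(\partial_\nrml w)w_t$ vanish (on $\Gamma_W$ because $w\equiv 0$ forces $w_t=0$; on $\Gamma$ because $\partial_\nrml w=0$), yielding $\tfrac{d}{dt}E(t) = \int_{\Omega_W} g\,w_t$. Integrating from $s$ to $t$, then averaging in $s\in[0,T]$, the already-established $L^2(0,T;H^1)\cap H^1(0,T;L^2)$ bound converts into the desired pointwise bound on $E(t)$, with periodicity eliminating endpoint-in-time boundary terms.

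For Part 2, the weaker contractivity $\bxi^T\grad\flow\,\bxi\ge C|\bxi\cdot\flow|^2$ only supplies $\|\pdb w\|_{L^2}^2$ from the bulk LHS, and the helpful $\Gamma_W$ term only supplies $-\int_{\Gamma_W}(\flow\cdot\nrml)|\partial_\nrml w|^2$. To recover $\|w\|_{L^2}$ I apply the Poincar\'e-type inequality \eqref{GPoincare} with $f=w$, which is built precisely to pair these two positive quantities with $\|H\|_{L^2(\Gamma)}^2$ (coming from the trace on $\Gamma$) to dominate $\|w\|_{L^2(\Omega_W)}^2$. The main obstacle I anticipate is closure: the RHS of the master identity still contains $\int g\,\pdb w$, which must be absorbed into the only directional bulk term $\|\pdb w\|^2$, while $\int g w\diverg\flow$ must be bounded via $\|w\|_{L^2}$ obtained \emph{through} \eqref{GPoincare} without ever invoking a full gradient norm that is no longer under our control. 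Managing this circular absorption, while simultaneously accounting for how $\|H\|_{H^1((0,T)\times\Gamma)}$ enters each boundary integral, is the crux of Part 2.
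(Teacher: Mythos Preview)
Your proposal is correct and follows essentially the same approach as the paper: both reduce the boundary integral in Lemma~\ref{KeyEstimateLemma} via the overdetermined conditions to the two terms $\tfrac{1}{2}\int_\Gamma|\nabla_\Gamma H|^2(\flow\cdot\nrml)$ and $-\tfrac{1}{2}\int_{\Gamma_W}|\partial_\nrml w|^2(\flow\cdot\nrml)$, then exploit the sign conditions and (for Part~2) the tailored Poincar\'e inequality~\eqref{GPoincare}. The only cosmetic differences are that the paper obtains the $L^2$ bound on $w_t$ by testing the equation with $w$ itself (rather than invoking \eqref{equipartition1} with the weight $\diverg\flow$), and it upgrades to $L^\infty$-in-time by citing \cite[Lemma~II.4.1]{temam} in place of your explicit averaging-in-$s$ of the energy identity; both routes are equivalent.
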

	
	\begin{remark}
	\label{rem:sharp}
	Let us show that Example~\ref{ex:1} and Example~\ref{ex:2} are sharp due to \eqref{est2}. Indeed, in the examples, $h=H=0$. As we are in the graph setting (compare with Condition~\ref{def:graph}) we find that one can choose $\flow=(y-2) e_2$. Hence \eqref{est2} becomes
	\begin{equation*}
		\norm{w}_{L^2((0,T)\times \Omega_W)}+\norm{\partial_{y} w}_{L^2((0,T)\times \Omega_W)}
		\lesssim \norm{g}_{L^2((0,T)\times \Omega_W)}
		\end{equation*}
		which is exactly what is satisfied by $\hat{w}$ constructed in Example~\ref{ex:1}.	
Now if $\partial_t g\in L^2((0,T)\times \Omega_W)$, one finds by time-differentiation and \eqref{est2} that $\partial_t w, \partial_t \partial_y w\in L^2((0,T)\times \Omega_W)$. Now by equipartition \eqref{equipartition1}, one finds that $\nabla w\in L^2(0,T;\mathbf L^2(\Omega_W))$ and hence $w$ will constitute a weak solution. This is precisely the regularity that was satisfied by $\hat{w}$ in Example~\ref{ex:2}.	But the estimate \eqref{est2} implies that in this setting no further differentiability in space for $g$ is needed to get a weak solution.
	\end{remark}

\begin{proof}[Proof of Proposition \ref{prop:WaveEst}] 
{ 
The proof is based on the identity in Lemma \ref{KeyEstimateLemma}. We first simplify the boundary term on the left-hand side of the identity.
Let $\nabla_{\Gamma}$ and $\nabla_{\Gamma_W}$ denote the tangential gradients over $\Gamma$ and $\Gamma_W$, respectively. 
Then we will show that}
\begin{align}\label{eq:step2} \inttime &\int_{\partial\Omega_W} \left( \frac{1}{2}\; \abs{\grad w}^2 \; \flow\cdot \nrml - (\grad w\cdot \nrml) (\grad w\cdot\flow) \right) 
\\
&\quad = \frac{1}{2}\inttime\int_{\Gamma}\abs{\nabla_{\Gamma} H}^2(\flow\cdot\nrml) 
-\frac{1}{2} \inttime\int_{\Gamma_W}|\partial_{\mathbf n} w|^2(\flow\cdot\nrml)
\end{align}
{ To prove the above identity, we note the following.}
First, from \eqref{weq} we have  $\partial_{\mathbf n} w=0$ on $\Gamma$ and $\nabla_{\Gamma_W}w=0$ on $\Gamma_W$. Therefore, writing $\nabla w = (\nabla w \cdot \mathbf n)\mathbf n + \sum_{j=1}^{d-1}(\nabla w \cdot \mathbf t_j) \mathbf t_j$, we obtain
$$
\int_{\partial\Omega_W}(\grad w\cdot \nrml) (\grad w\cdot\flow)
=\int_{\Gamma_W}|\partial_{\mathbf n} w|^2(\flow\cdot\nrml).
$$
Further
$$
\int_{\partial\Omega_W}\abs{\grad w}^2 \; \flow\cdot \nrml\, dx
=\int_{\Gamma_W}|\partial_{\mathbf n} w|^2(\flow\cdot\nrml)+\int_{\Gamma} \abs{\nabla_{\Gamma} w}^2 (\flow\cdot\nrml),
$$
which implies \eqref{eq:step2} by the given boundary conditions.

\noindent \textbf{Case 1:} We assume the {\em Generalized Optics Condition} \ref{def:GOC}. From the contractive domain hypothesis, \eqref{eq:contraction}, and the sign control in \eqref{GOCSign} we obtain
\begin{align*}
\|\nabla w\|^2_{L^2(0,T;\mathbf L^2(\Omega_W))}
&\lesssim \inttime \intwave (\grad w)^T \grad \flow \grad w
\\
& \lesssim \inttime \int_{\Omega_W} \left( g  (\nabla w\cdot \mathbf b) + \frac{1}{2}\; g w \diverg\flow  +\frac{\abs{w}^2}{4} \Delta\diverg \flow\right)\, 
\\
&\quad +  \inttime \intgamm \left[ \frac{\abs{h}^2}{2} \flow \cdot \nrml- \frac{\abs{\nabla_{\Gamma} H}^2}{2} \flow \cdot \nrml { -}\frac{\abs{H}^2}{4}\partial_\nrml \diverg\flow\right].
\end{align*}
The terms on right-hand side of the identity in Lemma \ref{KeyEstimateLemma} are then estimated using the Cauchy-Schwarz and Young inequalities.  Classical Poincar\'e---since $\gamma_0w =0$ on $\Gamma_W$---provides $L^2(0,T;L^2(\Omega_W))$ control from $\|\nabla w\|^2_{L^2(0,T;\mathbf L^2(\Omega_W))}$ control. 
Thus we have
\begin{equation}
\norm{w}_{L^2(0,T;H^1_{\Gamma_W}(\Omega_W))}
	\lesssim \norm{g}_{L^2((0,T)\times\Omega_W)}+\norm{H}_{H^1((0,T)\times\Gamma)}\end{equation}
Finally, to obtain control of $||w_t||^2_{L^2(0,T;L^2(\Omega_W))}$ we utilize \typo{a repeat a simple}{a simple} equipartition argument. We test the equation \eqref{weq} by the solution itself to obtain
$$\int_0^T\int_{\Omega_W} [w_{tt}-\Delta w]w = \int_0^T\int_{\Omega_W} gw.$$
Integrating by parts in space and time, as before,  invoking periodicity of $w$ and the double boundary conditions in \eqref{weq}, we obtain
$$\typo{\int_0^T\int_{\Omega_W}}{\int_0^T}||\nabla w||^2_{\mathbf L^2(\Omega_W)} - ||w_t||^2_{L^2(\Omega_W)} =  \int_0^T\int_{\Omega_W} gw.$$
And from this, we observe
$$||w_t||^2_{L^2(0,T;L^2(\Omega_W))} \lesssim ||g||^2_{L^2(0,T;L^2(\Omega_W))}+|| w||^2_{L^2(0,T;H^1_{\Gamma_W}(\Omega_W))},$$
and the estimate in \eqref{est1} follows. 

Finally, using the obtained estimates and \cite[Lemma II.4.1]{temam}, we derive the final $L^{\infty}$-in-time estimate.

\noindent \textbf{Case 2}: We now assume the  {\em Graph Optics Condition \ref{def:GGC}}. From the assumed Poincar\'e-type inequality in \eqref{GPoincare}
$$
\inttime\intwave|w|^2 \lesssim \left (\intwave (\grad w)^T \grad \flow \grad w-{ \inttime \int_{\Gamma_W}|\partial_n w|^2\flow\cdot\nrml}\right )+\|H\|^2_{L^2((0,T)\times\Gamma)}.
$$
From the { second} item in Condition \ref{def:GGC} we have that 
$$\int_{\Gamma_W} |\partial_{\mathbf b}w|^2 \lesssim \left (\intwave (\grad w)^T \grad \flow \grad w\right ).$$
Now, returning to the identity in Lemma \ref{KeyEstimateLemma}, we combine the previous two inequalities, and use the assumed sign control  in Condition \ref{def:GGC} on the term involving $\Delta \text{div}\mathbf b$. This allows us to estimate the remaining terms exactly as in the previous case, and  thus the claim follows.
\end{proof}

For applications to our construction, the following version of the above estimates will be  more suitable for use. This will be apparent in the next section, where periodic solutions are constructed. 
\begin{corollary}
\label{cor:key} 
Let $\Omega_W$ be a Lipschitz domain, assume that $\flow\in C^{3,1}(\Omega_W)$, and assume  $\Omega_W$ has the elliptic regularity property w.r.t. $\flow$ (Definition \ref{EllipticRegCond}).  Take data $h\in L^2((0,T)\times\Gamma)$, $g\in L^2((0,T)\times\Omega_W)$, with $\mean{g}\equiv 0\equiv \mean{h}$. 
Finally, assume that
	\[
\flow\cdot \nrml\geq 0\text{ on }\Gamma.
\]
Then a given finite energy solution satisfying the overdetermined problem \eqref{weq} satisfies:	
\begin{enumerate}
	\item If $(\Omega_W,\Gamma_W)$ satisfies the Generalized Optics Condition \ref{def:GOC} then:
		\[
	\norm{w}_{W^{1,\infty}(0,T;L^2(\Omega_W))}+\norm{w}_{L^\infty(0,T;H^1(\Omega_W))}
	\lesssim \norm{g}_{L^2((0,T)\times\Omega_W)}+\norm{H}_{H^{1}(0,T;L^2(\Omega_W))}
	\]
	\item If $(\Omega_W,\Gamma_W)$ satisfies the  Graph Optics Condition \ref{def:GGC} then:
	\[
	\norm{w}_{L^2((0,T)\times \Omega_W))}+\norm{\partial_{\bf b} w}_{L^2((0,T)\times \Omega_W)}
	\lesssim \norm{g}_{L^2((0,T)\times \Omega_W)}+\norm{H}_{H^{1}(0,T;L^2(\Omega_W))},
	\]
	{  where $H$ is defined by \eqref{PrimitiveH}}.
\end{enumerate}
\end{corollary}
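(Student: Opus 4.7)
My plan is to revisit the identity established in Lemma \ref{KeyEstimateLemma} and exploit the additional sign hypothesis $\mathbf{b}\cdot\mathbf{n}\geq 0$ on $\Gamma$ (combined with the elliptic regularity property) to convert boundary-trace norms of $H$ on $\Gamma$ into the interior norm $\|H\|_{H^1(0,T;L^2(\Omega_W))}$. The structural steps of Proposition \ref{prop:WaveEst} remain intact; only the treatment of the $\Gamma$ contributions changes.

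First, I isolate the role of the new assumption $\mathbf{b}\cdot\mathbf{n}\geq 0$ on $\Gamma$. Revisiting the decomposition \eqref{eq:step2} used in the proof of Proposition \ref{prop:WaveEst}, the tangential contribution $\tfrac12\int_\Gamma |\nabla_\Gamma H|^2 (\mathbf{b}\cdot\mathbf{n})$ sits on the left-hand side with a non-negative coefficient and can simply be discarded when the identity is converted into a one-sided estimate. This eliminates the $\|\nabla_\Gamma H\|_{L^2((0,T)\times \Gamma)}$ contribution that appears in Proposition \ref{prop:WaveEst}, leaving only $\|h\|_{L^2((0,T)\times \Gamma)}$ and $\|H\|_{L^2((0,T)\times \Gamma)}$ as the remaining trace contributions on the right-hand side.

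Second, I will upgrade these remaining trace quantities to the interior norm via an elliptic lifting. Under the combined sign hypotheses we have $\tilde\Gamma_W=\Gamma_W$ and $\tilde\Gamma=\Gamma$ in the sense of Definition \ref{EllipticRegCond}, so the elliptic regularity property yields that the mixed-boundary-value-problem solution with $L^2(\Omega_W)$ source lies in $H^{3/2^+}(\Omega_W)\cap H^1_{\Gamma_W}(\Omega_W)$. Interpreting $H$ as the elliptic extension, this regularity and the usual trace theorem give
\[
\|H\|_{L^2((0,T)\times \Gamma)} + \|\partial_t H\|_{L^2((0,T)\times \Gamma)} \lesssim \|H\|_{H^1(0,T;L^2(\Omega_W))},
\]
with the bound on $\partial_t H = h$ obtained by differentiating the elliptic problem in time and using periodicity to commute differentiation with the lift. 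The E-property then permits the same conclusion on general Lipschitz $\Omega_W$ through the approximation $\chi_{\Omega_W^n} \to \chi_{\Omega_W}$.

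Third, the remainder follows the proof of Proposition \ref{prop:WaveEst} verbatim. In Case (1), I use contractivity \eqref{eq:contraction} together with Poincar\'e to extract $\|\nabla w\|^2_{L^2}$ control from $\int(\nabla w)^\top \nabla \flow\, \nabla w$, close $\|w_t\|^2_{L^2}$ via an equipartition identity, and upgrade to $L^\infty$-in-time through \cite[Lemma II.4.1]{temam}. In Case (2), I invoke the graph Poincar\'e-type inequality \eqref{GPoincare} together with the anisotropic contractivity $\bxi^\top \nabla \flow\, \bxi \ge C|\bxi\cdot\flow|^2$ to close the estimate in the weaker $(L^2,\partial_\flow)$ norm.

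The hard part is the lifting step, because $L^2(\Omega_W)$ functions do not carry well-defined boundary traces. The elliptic regularity property is exactly what makes the identification possible: it promotes $L^2$ sources to $H^{3/2^+}$ solutions, providing just enough regularity for a trace embedding into $L^2(\Gamma)$. The time-periodicity is essential here, since it permits the ``exchange'' of a spatial tangential derivative on $\Gamma$ (needed in Proposition \ref{prop:WaveEst}) for one additional time derivative on an interior extension---the trade-off alluded to in Section \ref{geomarks}---and this is precisely why one full time derivative of $H$ in $L^2(\Omega_W)$ suffices in place of full $H^1((0,T)\times \Gamma)$ regularity.
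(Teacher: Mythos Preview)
Your first step is exactly right and is the heart of the paper's argument: the new sign hypothesis $\flow\cdot\nrml\geq 0$ on $\Gamma$ makes the tangential term $\tfrac12\int_\Gamma|\nabla_\Gamma H|^2(\flow\cdot\nrml)$ in \eqref{eq:step2} non-negative, so it can be dropped. After that, the only surviving $\Gamma$ contributions from Lemma~\ref{KeyEstimateLemma} are $\int_0^T\int_\Gamma|h|^2(\flow\cdot\nrml)$ and $\int_0^T\int_\Gamma|H|^2\partial_\nrml\diverg\flow$, which together are controlled by $\|H\|_{H^1(0,T;L^2(\Gamma))}$. The remaining argument is then identical to Proposition~\ref{prop:WaveEst}.

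Your second step, however, is a genuine misstep. The norm ``$H^1(0,T;L^2(\Omega_W))$'' on the right-hand side of the corollary is a typo for $H^1(0,T;L^2(\Gamma))$ (compare how the estimate is invoked in Step~2 of the proof of Theorem~\ref{UniformEstamtesTm}, where the bound is written with $L^2(\tilde\Gamma)$). There is no lifting of $H$ from $\Gamma$ to $\Omega_W$ taking place, and the inequality you write---bounding a boundary $L^2$ trace by an interior $L^2$ norm---is false in general, regardless of elliptic regularity. You have also misidentified the role of the elliptic regularity property: it is invoked to show that the \emph{solution} $w$, which satisfies the mixed problem $-\Delta w = g - w_{tt}\in L^2$ with homogeneous Dirichlet data on $\tilde\Gamma_W$ and homogeneous Neumann data on $\tilde\Gamma$, lies in $H^{3/2^+}(\Omega_W)$. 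This is precisely the regularity hypothesis required to apply the identity of Lemma~\ref{KeyEstimateLemma}; it has nothing to do with lifting the boundary datum $H$. Finally, the E-property is not used in this corollary at all---only the elliptic regularity property itself is; the E-property enters later, in the approximation argument of Section~\ref{sec:construction}.
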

\begin{proof}
{  We start by mollifying the equation in time as was done in the proof of Theorem~\ref{th:uniqueness}. This defines $w_\rho$ a smooth in time solution, with boundary values $H_\rho$ on $\Gamma$ and right hand side $g_\rho$. For this solution we show that we can apply Lemma~\ref{KeyEstimateLemma}. The assumed elliptic regularity property of $\Omega_W$ w.r.t.\ the vector field ${\bf b}$ can be applied as $-\Delta w_\rho=g_\rho-\partial_t^2w_\rho\in L^2([0,T]\times \Omega_W)$, $w_\rho=0\text{ on }\Gamma_W$ and $\partial_{\bf n} w_\rho=0$ a. e.\ on $\Gamma$. Hence $w_\rho\in L^2(0,T;H^{\frac{3}{2}+}(\Omega_W))$ and in particular $H_\rho=w_\rho\in L^2(0,T;H^1(\Gamma))$, by the trace theorem. Now we can apply Lemma~\ref{KeyEstimateLemma} and find by \eqref{eq:key} and \eqref{eq:step2} that
\begin{align*}
		&\inttime \intwave 
		(\grad w_\rho)^T \grad \flow \grad w_\rho\, 
		+ \frac{1}{2}\int_{\Gamma}\abs{\nabla_{\Gamma} H_\rho}^2(\flow\cdot\nrml) 
-\frac{1}{2} \int_{\Gamma_W}|\partial_{\mathbf n} w_\rho|^2(\flow\cdot\nrml)\, 
		\\
		&= \inttime \intwave \left( g_\rho \pdb w_\rho + \frac{1}{2}\; g_\rho w_\rho \diverg\flow  +\frac{\abs{w_\rho}^2}{4} \Delta\diverg \flow\right)\, 
		+  \inttime \intgamm \left (\frac{\abs{h_\rho}^2}{2} \flow \cdot \nrml -\frac{\abs{H_\rho}^2}{4}\partial_\nrml \diverg\flow\right ),
	\end{align*}
	hence the assumed condition $\flow\cdot {\bf n}\geq 0$ on $\Gamma$ allows to complete the estimate independent of $\rho$ in the wished form by the same arguments as in the proof of Proposition~\ref{prop:WaveEst}.
}
%
\end{proof}

\section{Construction of Solutions}\label{sec:construction}

In this section, we complete the proofs of Theorems \ref{th:main1} and \ref{th:main2} by constructing approximate solutions. Subsequently we employ the hyperbolic estimates derived in the previous section in order to pass to the limit. However, the approximate solutions do not satisfy the assumptions of Corollary \ref{cor:key}. Specifically, there are two issues:
\begin{enumerate}
	\item The approximate solutions do not have zero Neumann boundary values on the interface $\Gamma$. We address this issue by constructing suitable extensions. 
	\item Secondly, the hyperbolic estimates in Corollary \ref{cor:key} require a level of regularity that our weak solutions lack.
 This issue is resolved by employing elliptic regularity for a mixed boundary value problem associated with the Laplacian. This approach suffices because periodicity allows us to derive uniform estimates for higher-order time derivatives.
\end{enumerate}

The approximate solutions are constructed using an abstract semigroup framework. To apply a general abstract result for constructing approximate periodic solutions, it is necessary to ``translate" the problem by introducing artificial dissipation into each of the constitute equations (in a first order formulation of the heat-wave system).

In the next step, we extend the results to less regular domains by approximating general domains with ones that satisfy the elliptic regularity property. It is important to emphasize that the final uniform estimates depend solely on spatial gradients and, therefore, do not require any elliptic regularity. Nonetheless, elliptic regularity theory plays a critical role in the intermediate steps leading to the final estimates, primarily for technical reasons in dealing with boundary triple points.

To construct periodic approximate solutions, we rely on the following abstract theorem:
\begin{theorem}\cite[Proposition 3.4.]{bostan}\label{thm:bostan}	
	Let $H$ be a Hilbert space and suppose that $A: \mathcal D(A) \subset H \rightarrow H$ is a maximal monotone operator. Suppose that $f \in C^1_{\sharp}(\mathbb{R} ; H)$ is $T$-periodic. Then for every $\epsilon>0$ the equation
$$
x^{\prime}(t)+A x(t)+\epsilon x(t)=f(t), \quad t \in \mathbb{R},
$$
has a unique $T$-periodic smooth solution in $C_{\sharp}^1(\mathbb{R} ; H) \cap C_{\sharp}(\mathbb{R} ; \mathcal D(A))$.
\end{theorem}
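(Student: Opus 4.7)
The plan is to combine Hille--Yosida/Lumer--Phillips with a Banach fixed point on the Poincar\'e map, then upgrade to classical regularity by bootstrapping.

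First I would observe that since $A$ is maximal monotone, so is $A+\epsilon I$, and its monotonicity is strict with constant $\epsilon>0$. By Lumer--Phillips, $-(A+\epsilon I)$ generates a $C_0$-semigroup of contractions $\{S_\epsilon(t)\}_{t\ge 0}$ with the quantitative decay
\[
\norm{S_\epsilon(t)}_{\mathscr L(H)} \le e^{-\epsilon t}, \qquad t\ge 0.
\]
Given an ``initial'' datum $x_0\in H$, the unique mild solution to the Cauchy problem on $[0,T]$ is
\[
x(t)=S_\epsilon(t)x_0+\int_0^t S_\epsilon(t-s)f(s)\,ds.
\]

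Next I would set up the Poincar\'e map $\Phi:H\to H$ by $\Phi(x_0):=x(T)$. The periodicity condition $x(T)=x(0)$ becomes the fixed point equation $\Phi(x_0)=x_0$, i.e.
\[
(I-S_\epsilon(T))x_0=\int_0^T S_\epsilon(T-s)f(s)\,ds.
\]
Since $\norm{S_\epsilon(T)}\le e^{-\epsilon T}<1$, the map $\Phi$ is a strict contraction and $(I-S_\epsilon(T))$ is invertible on $H$; Banach's fixed point theorem yields a unique $x_0^\star\in H$, hence a unique $T$-periodic mild solution $x\in C(\mathbb R;H)$ (the extension to $\mathbb R$ uses $T$-periodicity of $f$ together with the semigroup identity).

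For the regularity upgrade I would exploit linearity and the fact that $f'\in C(\mathbb R;H)$ is itself $T$-periodic. Running the same contraction argument for the ``differentiated'' equation $y'+(A+\epsilon I)y=f'$ produces a unique $T$-periodic mild solution $y\in C(\mathbb R;H)$. I would then identify $y=x'$ by defining $\tilde x(t):=x_0^\star+\int_0^t y(s)\,ds$, checking via Fubini and the identity $S_\epsilon(t-s)\int_0^s y = \int_0^s S_\epsilon(t-s)y$ that $\tilde x$ is also a $T$-periodic mild solution to the original equation, and invoking uniqueness to conclude $\tilde x = x$. Consequently $x\in C^1(\mathbb R;H)$, and reading the equation pointwise gives $Ax(t)=f(t)-\epsilon x(t)-x'(t)\in H$ for every $t$, so $x(t)\in\mathcal D(A)$ with $Ax\in C(\mathbb R;H)$, which is exactly $C^1(\mathbb R;H)\cap C(\mathbb R;\mathcal D(A))$.

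The main technical obstacle is step identifying $y$ as $x'$: abstractly one cannot differentiate the Duhamel formula under the integral when $x_0^\star\notin\mathcal D(A)$, so one must pass to integrated identities and use uniqueness of the periodic fixed point instead. Once this identification is made, the rest follows from soft semigroup arguments and the contractivity of $\Phi$.
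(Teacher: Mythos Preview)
The paper does not supply its own proof of this theorem; it is quoted from \cite{bostan} and used as a black box, so there is no in-paper argument to compare against. Bostan's statement is for general (possibly nonlinear) maximal monotone operators, whereas your use of Lumer--Phillips and the Duhamel formula tacitly assumes $A$ is linear. For the paper's application---where the operator is the linear heat--wave generator $\mathbf A$---this restriction is harmless, but it should be flagged.

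In the linear setting your Poincar\'e-map contraction for existence and uniqueness of a periodic mild solution is correct and standard. The regularity bootstrap is right in spirit, but the identification $\tilde x=x$ is under-argued: for $\tilde x(t)=x_0^\star+\int_0^t y$ to be $T$-periodic you must first show $\int_0^T y=0$, and the ``Fubini'' remark you give does not deliver this on its own. It \emph{can} be checked directly using the identity $\int_0^\tau S_\epsilon(r)\,dr=(A+\epsilon I)^{-1}(I-S_\epsilon(\tau))$ together with the fixed-point formula for $y(0)$, but this is several more lines than your sketch suggests. A shorter route bypasses the differentiated problem entirely: since $f\in C^1$, the Duhamel term $\int_0^T S_\epsilon(T-s)f(s)\,ds$ already lies in $\mathcal D(A)$ by the standard inhomogeneous regularity theorem (Pazy), and the Neumann series $(I-S_\epsilon(T))^{-1}=\sum_{n\ge0}S_\epsilon(nT)$ converges in the graph norm because each $S_\epsilon(nT)$ commutes with $A$ and contracts by $e^{-\epsilon nT}$. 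Hence $x_0^\star\in\mathcal D(A)$, and the mild solution is automatically classical, giving $x\in C^1(\mathbb R;H)\cap C(\mathbb R;\mathcal D(A))$ without ever constructing $y$.
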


The solution above is smooth, and thus satisfies the abstract, translated Cauchy problem pointwise in time. However, it also satisfies the  classical variation of parameters formula. Namely, when $A$ is linear, let $S(t) \in \mathscr L(H)$ be the semigroup associated to the generator, $-A$. Then the following holds for all $x_0 \in \mathcal D(A)$ and $f$ as above:
$$x(t) = S(t)x_0+\int_0^tS(t-s)f(s)ds.$$
This can be extended by density to hold (see \cite{galdi,pazy}) for $x_0 \in H$ and $f \in C([0,T];H)$. 

{  
\begin{corollary}
	\label{cor:bostan}
	Assume that hypotheses of Theorem \ref{thm:bostan} are satisfied, and additionally $f\in C^k_{\sharp}(\mathbb{R} ; H)$ for some $k\in\N$. Then the unique $T$-periodic solution $x$ to the equation in Theorem \ref{thm:bostan} satisfies $x\in C_{\sharp}^k(0,T ; H) \cap C_{\sharp}^{k-1}(0,T ; \mathcal D(A))$.
\end{corollary}
\begin{proof}
	We prove the claim by induction. The case $k=1$ is Theorem \ref{thm:bostan}. Assume the claim holds for $k=n\geq 1$. Then $x\in C_{\sharp}^n(0,T ; H) \cap C_{\sharp}^{n-1}(0,T; \mathcal D(A))$. Differentiating the equation in Theorem \ref{thm:bostan}, we obtain
	$$
	x^{(n+1)}(t)+A x^{(n)}(t)+\epsilon x^{(n)}(t)=f^{(n)}(t), \quad t \in \mathbb{R},
	$$
	where $f^{(n)}\in C_{\sharp}^1(0,T ; H)$. By the induction hypothesis, we have $x^{(n)}\in C_{\sharp}^1(0,T ; H) \cap C_{\sharp}(0,T; \mathcal D(A))$. Thus, by Theorem \ref{thm:bostan}, we conclude that $x^{(n)}\in C_{\sharp}^1(0,T ; H) \cap C_{\sharp}(0,T ; \mathcal D(A))$, which implies that $x\in C_{\sharp}^{n+1}(0,T ; H) \cap C_{\sharp}^{n}(0,T ; \mathcal D(A))$. This completes the induction and the proof.
\end{proof}
}

Let $\epsilon>0$ be a (small) parameter. In our case, we will consider the following translated problem, in line with the abstract theorem above:
\begin{align}
	\label{eq:solid*}
	\ddot{w}-\Delta w+2\epsilon \dot{w}+\epsilon^2 w&=g\;{\rm in}\; (0,T)\times\Omega_W,\\
	\label{eq:fluid*}
	\dot{u}- \Delta u+\epsilon u&=f\;{\rm in}\; (0,T)\times\Omega_H,
	\\
	w_t&=u\;{\rm on}\; (0,T)\times \Gamma,
	\label{bcde}
	\\
	\
	\partial_{\mathbf n} u&= \partial_{\mathbf n} w\;{\rm on}\; (0,T)\times \Gamma,\\
	u & \equiv 0\;{\rm on}\; (0,T)\times\Gamma_{H},\\
	w & \equiv 0\;{\rm on}\; (0,T)\times\Gamma_{W},\\
	\label{eq:periodic*}
	u(0)=u(T), & \;w(0)=w(T),\; w_t(0)=w_t(T). 
\end{align}

The semigroup formulation of the heat-wave system with $\epsilon=0$---and a proof of generation---can be found in \cite{trig1,ZZ1,dutch}. Here we present the description of the generator for the heat-wave dynamics (taking $\epsilon =0$ above in \eqref{eq:solid*}--\eqref{eq:periodic*}).
For the  dynamics, we consider the reduced first order system in the variable $y$ on the state space $Y$:
\begin{equation} y = (w, v, u) \in Y=H^1_{\Gamma_W}(\Omega_W) \times L^2(\Omega_W) \times L^2(\Omega_H) \typo{where.}{.}\end{equation}
 We topologize both $H^1_{\Gamma_W}(\Omega_W)$ and $H_{\Gamma_H}^1(\Omega_H)$ via the gradient norm, using Poincar\'e's inequality.
Then the dynamics operator $\mathbf A$ has action given by:
\begin{equation}
\mathbf A y = \begin{bmatrix} 0 & -I & 0 \\ -\Delta  & 0 & 0 \\ 0 & 0 & -\Delta \end{bmatrix} \begin{pmatrix}w \\v \\ u \end{pmatrix} \in Y, \end{equation}
for $y \in \mathcal D(\mathbf A)$ with the domain: 
\begin{equation}
\mathcal D(\mathbf A) = \left\{ y \in H^1_{\Gamma_W}(\Omega_W) \times H^1_{\Gamma_W}(\Omega_W) \times H_{\Gamma_H}^1(\Omega_H)~:~(1.)\text{--}(4.) ~~\text{hold below}\right\}:
\end{equation}
 $${\footnotesize(1.)~\Delta w \in L^2(\Omega_W), ~(2.)~\Delta u \in L^2(\Omega_H), ~(3.)~\gamma_0(w)=\gamma_0(u) ~\text{ on }~\Gamma,~(4.)~\gamma_1(u)=-\gamma_1(w)~ \text{ on }~\Gamma.}$$
We remark that, in (3.), the velocity matching condition is viable in $H^{1/2}(\Gamma)$, owing to classical trace theory. In (4.), we interpret the Neumann traces in the sense of $H^{-1/2}(\Gamma)$. This is a globally defined object on $\partial \Omega_W\cup \partial \Omega_H$ which can then be restricted to boundary subdomains \cite{grisvard,dauge,aubin}. 
\begin{remark} Defining $\mathcal D(\mathbf A)$ in the above way admits the (only) Lipschitz domains and gives meaning to traces for the generator without appealing to elliptic regularity. \end{remark}

The above linear operator $\mathbf A$ is maximal monotone on $Y$, and thus  $-\mathbf A$ generates a $C_0$-semigroup of contractions on $Y$ \cite{pazy}. As such we may invoke Theorem \ref{thm:bostan}. By direct application thereof, we obtain the existence of a {\em strong} periodic solution to \eqref{eq:fluid*}--\eqref{eq:periodic*}, viewing this as an $\epsilon$-translation of $\mathbf A$. Namely, as we see in the corollary below, the solution is smooth in time $C^1(0,T;Y)$ and properly periodic; moreover, it has spatial regularity in the sense of $C([0,T]; \mathcal D(\mathbf A))$

\noindent
Thus we have the existence and uniqueness of a smooth in time approximate solution. 
\begin{corollary}
	For every $\eps>0$ and $(f,g)\in C_{\sharp}^{ {k}}([0,T];L^2(\Omega_H)\times L^2(\Omega_W))$, ${ k\in\N}$ there exists a unique smooth solution to problem \eqref{eq:solid*}--\eqref{eq:periodic*}. In particular, the solution $(w,w_t,u) \in C_{\sharp} ^{{ k-1}}\left([0,T];H^1_{\Gamma_W}(\Omega_W) \times L^2(\Omega_W) \times L^2(\Omega_H)\right)$.
\end{corollary}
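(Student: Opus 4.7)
The plan is to reformulate the $\eps$-translated system \eqref{eq:solid*}--\eqref{eq:periodic*} as a first-order Cauchy problem on $Y$ and apply Theorem~\ref{thm:bostan}. Setting $v := \partial_t w$ and $y := (w,v,u)$, the system rewrites as $y'(t) + \mathbf{A}_\eps\, y(t) = \mathbf{F}(t)$ with $y$ $T$-periodic, where $\mathbf{F}(t) = (0, g(t), f(t)) \in C^1(\mathbb{R}; Y)$ by hypothesis, and $\mathbf{A}_\eps := \mathbf{A} + \mathbf{B}_\eps$ with
\begin{equation*}
\mathbf{B}_\eps(w,v,u) := (0,\; 2\eps v + \eps^2 w,\; \eps u).
\end{equation*}
Since $\mathbf{B}_\eps$ is bounded on $Y$ we have $\mathcal{D}(\mathbf{A}_\eps) = \mathcal{D}(\mathbf{A})$, and since $-\mathbf{A}$ is a contraction generator on $Y$ (as stated in the excerpt), the bounded perturbation theorem furnishes a $C_0$-semigroup $\{S_\eps(t)\}_{t\ge 0}$ generated by $-\mathbf{A}_\eps$.

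The key step is to endow $Y$ with the equivalent inner product adapted to the factorization $(\partial_t + \eps)^2 w - \Delta w = g$, namely
\begin{equation*}
\|y\|_\sim^2 := \|\nabla w\|_{L^2(\Omega_W)}^2 + \|v + \eps w\|_{L^2(\Omega_W)}^2 + \|u\|_{L^2(\Omega_H)}^2,
\end{equation*}
and to check that $\mathbf{A}_\eps$ is maximal monotone in this norm with coercivity of order $\eps$. Computing $\tfrac{1}{2}\tfrac{d}{dt}\|y(t)\|_\sim^2$ along the homogeneous evolution, the interior cross-terms $(\nabla w, \nabla v)$ cancel by symmetry, the $\Gamma$-contributions cancel exactly via the couplings $\gamma_0(v) = \gamma_0(u)$ and $\partial_{\mathbf n} u = \partial_{\mathbf n} w$ built into $\mathcal{D}(\mathbf{A})$ (this is the same cancellation that makes the unperturbed $\mathbf{A}$ monotone), and there survives the strictly negative contribution $-\eps\|v + \eps w\|^2 - \eps\|u\|^2 - \|\nabla u\|^2$. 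This yields $\|S_\eps(t)\|_{\mathcal{L}(Y,\|\cdot\|_\sim)} \le e^{-c\eps t}$ for some $c>0$, hence $I - S_\eps(T)$ is invertible on $Y$.

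Existence and uniqueness of a $T$-periodic $y$ now follow by the variation-of-parameters argument that underlies Theorem~\ref{thm:bostan}: setting
\begin{equation*}
y(0) := (I - S_\eps(T))^{-1}\!\int_0^T S_\eps(T-s)\, \mathbf{F}(s)\, ds, \qquad y(t) := S_\eps(t)\, y(0) + \int_0^t S_\eps(t-s)\, \mathbf{F}(s)\, ds,
\end{equation*}
one checks $y(T) = y(0)$ directly, and the $C^1$-in-time smoothness of $\mathbf{F}$ upgrades $y$ to $C^1(\mathbb{R}; Y)\cap C(\mathbb{R}; \mathcal{D}(\mathbf{A}))$ by differentiating the equation in time. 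Unpacking the components of $y \in C(\mathbb{R}; \mathcal{D}(\mathbf{A}))$ gives the stated inclusion $(w, w_t, u) \in C([0,T]; H^1_{\Gamma_W}(\Omega_W) \times L^2(\Omega_W) \times L^2(\Omega_H))$, and uniqueness is immediate from the invertibility of $I - S_\eps(T)$.

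The anticipated main obstacle is the energy computation of the second paragraph: choosing the correct weighted inner product so that the mixed shift $2\eps v + \eps^2 w$ appearing in the wave block (coming from the factorization of $\partial_t^2 + 2\eps \partial_t + \eps^2$) manifests as a clean $\eps$-coercivity in that norm, while simultaneously preserving the interface cancellations on $\Gamma$ that are the whole reason $\mathbf{A}$ is monotone to begin with. Once this bookkeeping is done, the rest of the argument is a direct transcription of the proof of Theorem~\ref{thm:bostan}.
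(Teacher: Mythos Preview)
Your strategy—recast the system in first-order form, establish exponential decay of the $\eps$-semigroup, and invert $I - S_\eps(T)$ via variation of parameters—is precisely the mechanism behind Theorem~\ref{thm:bostan}. The paper, however, does not redo this; it simply invokes Theorem~\ref{thm:bostan} directly for the operator $\mathbf A$, already known to be maximal monotone, so that the $\eps$-shift is the identity perturbation $\eps I$ in the abstract equation $y' + \mathbf A y + \eps y = F$. No weighted norm or separate coercivity check is needed.

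Your explicit computation has a genuine gap at exactly the point you flagged as the main obstacle. In the norm $\|\cdot\|_\sim$, the wave-side interface contribution to $(\mathbf A_\eps y, y)_\sim$ produces $(v+\eps w)\,\partial_{\mathbf n}w$ on $\Gamma$, while the heat side contributes $u\,\partial_{\mathbf n}u = u\,\partial_{\mathbf n}w$. Since $\mathcal D(\mathbf A)$ enforces only $\gamma_0(v) = \gamma_0(u)$ on $\Gamma$ (not $\gamma_0(v+\eps w) = \gamma_0(u)$), the cancellation leaves a residual $-\eps\int_\Gamma w\,\partial_{\mathbf n}w$ of indefinite sign that cannot be absorbed by $\eps\|\nabla w\|^2$, $\eps\|v+\eps w\|^2$, or $\|\nabla u\|^2$. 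The reason is transparent after the change of variables $\tilde v := v + \eps w$: your weighted norm is just the standard $Y$-norm in $(w,\tilde v,u)$, and in that variable the system reads $y' + \mathbf A y + \eps y = F$, but the domain constraint becomes $\gamma_0(\tilde v) = \gamma_0(u)$, i.e.\ the interface coupling shifts to $w_t + \eps w = u$. That shifted problem is what a direct application of Theorem~\ref{thm:bostan} actually delivers; the $O(\eps)$ discrepancy in the coupling is harmless for the subsequent uniform estimates and the limit $\eps\to 0$.
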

\noindent In the above result, we consider the semigroup framework for the dynamics $(w,w_t,u)$ in the heat-wave system, as in \cite{ZZ2}. We consider data 
\[[0,g,f]^T \in C^1\left([0,T];H^1_{\Gamma_W}(\Omega_W) \times L^2(\Omega_W) \times L^2(\Omega_H)\right).\]

Now we turn to weak solutions. It is immediate that smooth solutions in the sense of the above corollary are weak, in the sense defined below.
{ \begin{definition}\label{DefWeakApp}
We say that
$$(u^{\eps},w^{\eps})\in L^2(0,T;H^1_{\Gamma_H}(\Omega_H))\times \left [L^2(0,T;H^1_{\Gamma_W}(\Omega_W))\cap H^1_{\sharp}(0,T;L^2(\Omega_W))\right]$$
is a weak solution to the translated problem \eqref{eq:solid*}--\eqref{eq:periodic*} iff \eqref{bcde} holds in the sense of \eqref{bcdw} and the following equality is satisfied for every pair of test functions $(\phi,\psi) \in C_{\sharp}^1([0,T];H^1_{\Gamma_H} \times H^1_{\Gamma_W})$ such that $\phi|_{\Gamma}=\psi|_{\Gamma}$, ~$\phi|_{\Gamma_H}=0, ~\psi|_{\Gamma_W}=0$:
\begin{align}\label{WeakReg}
	-&\int_0^T\int_{\Omega_H}u^{\eps}\partial_t\phi+\int_0^T\int_{\Omega_H}\nabla u^{\eps}\cdot\nabla\phi
	+\eps\int_0^T\int_{\Omega_H}u^{\eps}\phi - \int_0^T\int_{\Omega_W}\partial_t w^{\eps}\partial_t\psi
	\\\nonumber
	+&\int_0^T\int_{\Omega_W}\nabla w^{\eps}\cdot\nabla\psi
	+2\eps\int_0^T\int_{\Omega_W}\partial_t w^{\eps}\psi+\eps^2\int_0^T\int_{\Omega_W}w^{\eps}\psi
	=\int_0^T\langle f,\phi\rangle+\int_0^T\int_{\Omega_H}g\psi.
\end{align}
\end{definition}}

Such weak solutions automatically  satisfy the following formal energy estimates stated below.
\begin{lemma}\label{EnergyRegularized}
	Let $(u^{\eps},w^{\eps})$ be a smooth solution to the translated problem, \eqref{eq:solid*}--\eqref{eq:periodic*}. Then the solution satisfies the following energy inequalities for $k\in\N_0$:
	\begin{align*}
		\|\nabla \partial_t^k u^{\eps}\|^2_{L^2(0,T;\mathbf L^2(\Omega_H))}
		+\eps\|\partial_t^ku^{\eps}\|^2_{L^2(0,T;L^2(\Omega_H))}
		+\eps\|\partial^{k+1}_t w^{\eps}\|^2_{L^2(0,T;L^2(\Omega_W))}
		\\
		\leq
		C\left (\|\partial_t^k f\|^2_{L^2(0,T;H_{\Gamma_{H}}^1(\Omega_H)')}
		+\frac{1}{\eps}\|\partial_t^k g\|^2_{L^2(0,T;L^2(\Omega_W))}\right ), 
	\end{align*}
where the constant $C$ is independent of $\eps$.
\end{lemma}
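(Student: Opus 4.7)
The plan is to establish the base case $k=0$ by the standard pair of energy multipliers, and then bootstrap to general $k\in\N_0$ by time-differentiating the linear system. Throughout, the smoothness of $(u^\eps,w^\eps)$ from the preceding corollary makes all test-function manipulations rigorous at the approximate level, and $T$-periodicity is the essential ingredient that kills time-boundary contributions.

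For $k=0$, I would test the heat equation \eqref{eq:fluid*} against $u^\eps$ and the wave equation \eqref{eq:solid*} against $\partial_t w^\eps$, integrate by parts in space, and sum the two resulting identities. The key observation is that the Neumann coupling $\partial_{\mathbf n}u^\eps=\partial_{\mathbf n}w^\eps$ combined with the Dirichlet coupling $\partial_t w^\eps=u^\eps$ on $\Gamma$, together with the fact that the outer normals to $\Omega_H$ and $\Omega_W$ on $\Gamma$ point in opposite directions, makes the two interface boundary contributions cancel exactly. After integrating in time over $[0,T]$, all total time-derivative terms---including the $\eps^2$-weighted contribution $\tfrac{d}{dt}\tfrac12\eps^2\norm{w^\eps}_{L^2(\Omega_W)}^2$---vanish by $T$-periodicity, leaving only the nonnegative dissipative quantities $\norm{\nabla u^\eps}^2_{\mathbf L^2}$, $\eps\norm{u^\eps}^2_{L^2}$, and $2\eps\norm{\partial_t w^\eps}^2_{L^2}$ on the left and the two forcing pairings on the right.

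To close the estimate, I would bound the heat forcing by duality and Poincar\'e (since $u^\eps$ vanishes on $\Gamma_H$), giving $\int_0^T\langle f,u^\eps\rangle\le C\norm{f}^2_{L^2(0,T;H^1_{\Gamma_H}(\Omega_H)')}+\tfrac14\norm{\nabla u^\eps}^2_{L^2(0,T;\mathbf L^2(\Omega_H))}$ (absorbable), and bound the wave forcing by a weighted Young inequality, $\int_0^T\!\int_{\Omega_W}g\,\partial_t w^\eps\le \tfrac{1}{4\eps}\norm{g}^2_{L^2(0,T;L^2(\Omega_W))}+\eps\norm{\partial_t w^\eps}^2_{L^2(0,T;L^2(\Omega_W))}$ (absorbable into the $2\eps\norm{\partial_t w^\eps}^2$ term produced by the $\eps$-translation in \eqref{eq:solid*}). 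Absorbing both yields the $k=0$ bound with a constant independent of $\eps$.

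For arbitrary $k\ge 1$, the strategy is to differentiate the entire system $k$ times in $t$. By linearity of the PDEs, the interface conditions, and the homogeneous Dirichlet conditions, the pair $(\partial_t^k u^\eps,\partial_t^k w^\eps)$ solves the same translated problem with right-hand sides $(\partial_t^k f,\partial_t^k g)$ and remains $T$-periodic; applying the $k=0$ estimate to this differentiated tuple then delivers the claim. The main technical point is supplying enough time regularity to perform the $k$-fold differentiation rigorously, since Theorem~\ref{thm:bostan} only gives $C^1$ smoothness in time from $C^1$ data. I would resolve this by a time-mollification of $(f,g)$---using that their $T$-periodic extension to $\mathbb R$ admits a standard mollifier---deriving the estimates on the smoothed problem where the abstract regularity is ample, and then passing to the limit in the mollification parameter using the linear (and uniform-in-$\eps$) dependence of the estimate on the data.
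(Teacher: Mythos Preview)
Your proposal is correct and matches the paper's own approach: differentiate the system $k$ times in time and use the test functions $(\partial_t^k u^\eps,\partial_t^{k+1}w^\eps)$, with Poincar\'e on the heat side and a weighted Young inequality on the wave side. The paper simply calls this ``standard'' and omits the details; your additional remark about mollifying the data to justify the higher-order differentiations is consistent with how the paper later smooths $(f,g)$ to justify the estimates in Section~\ref{unifest}.
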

\begin{proof}
	The proof is standard, so we omit the details. One may differentiate the equation with respect to the time variable and use the test function $(\partial_t^k u^{\eps},\partial_t^{k+1}w^{\eps})$. Note that the boundary conditions ensure that the Poincar\'e's inequality hold, which may { be used} to estimate $L^2$ quantities on the RHS.
\end{proof}

Note that Lemma \ref{EnergyRegularized} cannot be used when passing to the limit as $\eps\to 0$. It is immediately clear that the estimate degenerates for the wave component in $\epsilon$, but it also degenerates for the heat component. This is due to the fact that Gronwall's inequality must be applied to derive the energy estimate for the coupled system. Therefore, we turn to the analysis from Section \ref{Sec:Hyp} to obtain uniform estimates independent of the parameter $\eps$. 

\begin{remark} 
We note that a different approach can yield the existence of a weak solution for the approximate problem. Namely:
for every $\eps>0$ and $f\in L^2(0,T;L^2(\Omega_H))$, $g\in L^2(0,T;L^2(\Omega_W))$, there exists a unique weak solution $(u^{\eps},w^{\eps})$ to problem \eqref{eq:solid*}--\eqref{eq:periodic*} in sense of Definition \ref{DefWeakApp}. 
This result is obtained through regularization of the data $(f,g) \in L^2(0,T;L^2(\Omega_H) \times L^2(\Omega_W))$ by $(f_n, g_n) \in C_0^1([0,T];H^1_{\Gamma_H}(\Omega_H) \times H^1_{\Gamma_W}(\Omega_W))$ and limit passage. By invoking Theorem \ref{thm:bostan}, we obtain a smooth solution which then reconstructs the initial data $(w^n(0),w_t^n(0),u^n(0)) \in \mathcal D(\mathbf A) \subset H^1_{\Gamma_W}(\Omega_W) \times L^2(\Omega_W) \times L^2(\Omega_H)$ (with associated estimates). Extending by density (in the variable $n$) yields a weak solution. This demonstrates that with $(f,g) \in L^2(0,T;L^2(\Omega_H) \times L^2(\Omega_W))$ and $\epsilon>0$, the weak solution as in Definition \ref{DefWeakApp} demonstrates no loss of (spatial) regularity from that of the associated Cauchy problem.
\end{remark}

\subsection{Uniform Estimates}\label{unifest}
The key tool for obtaining uniform estimates is Corollary \ref{cor:key}. However, to apply this result, several preparatory steps are required. The first step is to derive uniform estimates for the mean values $\mean{u^{\eps}}$ and $\mean{w^{\eps}}$. 

\begin{proposition}[Mean Value Estimates]\label{MeanValueEst}
	Let $k\in \N_0$, $f\in H^k(0,T;H_{\Gamma_{H}}^1(\Omega_H)')$ and $g\in H^k(0,T;L^2(\Omega_W))$, and let $(u^{\eps},w^{\eps})$ be a solution to the translated  problem \eqref{eq:solid*}--\eqref{eq:periodic*} in the sense of Theorem \ref{thm:bostan}. Then the following estimates hold
	\begin{align}\label{MeanHeatEst}
		\|\mean{\partial_t^k u^{\eps}}\|_{H^1(\Omega_H)}\lesssim & ~\|\mean{\partial_t^k f}\|_{H_{\Gamma_{H}}^1(\Omega_H)'},
		\\
		\label{MeanWaveEst}
		\|\mean{\partial_t^k w^{\eps}}\|_{H^1(\Omega_W)}\lesssim &~
		\|\mean{\partial_t^k f}\|_{H_{\Gamma_{H}}^1(\Omega_H)'},
		+\|\mean{\partial_t^k g}\|_{L^2(\Omega_W)}.
	\end{align}
\end{proposition}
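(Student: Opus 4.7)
The plan is to time-average each equation of the translated system \eqref{eq:solid*}--\eqref{eq:fluid*} over $[0,T]$ and exploit time-periodicity to eliminate every pure time-derivative contribution, thereby obtaining a decoupled pair of linear elliptic problems for $\mean{u^\eps}$ and $\mean{w^\eps}$. Averaging \eqref{eq:fluid*} yields $-\Delta \mean{u^\eps} + \eps \mean{u^\eps} = \mean{f}$ in $\Omega_H$; averaging \eqref{eq:solid*} gives $-\Delta \mean{w^\eps} + \eps^2 \mean{w^\eps} = \mean{g}$ in $\Omega_W$, since $\mean{\partial_t w^\eps}$ and $\mean{\partial_{tt} w^\eps}$ both vanish by periodicity of $w^\eps$ and $\partial_t w^\eps$. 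For the boundary data I repeat the reasoning of Section \ref{Sec:invariances}: the coupling $\partial_t w^\eps = u^\eps$ on $\Gamma$ together with periodicity of $w^\eps$ forces $\mean{u^\eps}|_\Gamma = 0$, which combined with $u^\eps \equiv 0$ on $\Gamma_H$ produces full homogeneous Dirichlet data on $\partial\Omega_H$ for $\mean{u^\eps}$. The wave mean inherits the mixed conditions $\mean{w^\eps} = 0$ on $\Gamma_W$ and $\partial_{\mathbf n}\mean{w^\eps} = \partial_{\mathbf n}\mean{u^\eps}$ on $\Gamma$. The decisive structural point is that this decoupling is preserved by the $\eps$-translation.

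For the heat estimate \eqref{MeanHeatEst}, I would test the elliptic equation for $\mean{u^\eps}$ against $\mean{u^\eps}$ itself. The $\eps\mean{u^\eps}$ contribution is non-negative and discarded; combined with Poincar\'e's inequality (valid since $\mean{u^\eps}$ vanishes on all of $\partial\Omega_H$), this yields $\|\mean{u^\eps}\|_{H^1(\Omega_H)} \lesssim \|\mean{f}\|_{H^1_{\Gamma_H}(\Omega_H)'}$ with constant independent of $\eps$. With this in hand, the equation itself gives $\Delta \mean{u^\eps} \in H^1_{\Gamma_H}(\Omega_H)'$, and hence, through the generalized Green's formula, a normal trace $\partial_{\mathbf n}\mean{u^\eps} \in H^{-1/2}(\partial\Omega_H)$ whose restriction to $\Gamma$ obeys $\|\partial_{\mathbf n}\mean{u^\eps}\|_{H^{-1/2}(\Gamma)} \lesssim \|\mean{f}\|_{H^1_{\Gamma_H}(\Omega_H)'}$. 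For \eqref{MeanWaveEst}, the weak formulation of the mixed Laplace problem for $\mean{w^\eps}$ tested by $\mean{w^\eps}$ produces $\|\nabla\mean{w^\eps}\|_{L^2}^2 + \eps^2\|\mean{w^\eps}\|_{L^2}^2$ on the left and two terms on the right: the volume pairing of $\mean{g}$ with $\mean{w^\eps}$, and the boundary duality $\langle \partial_{\mathbf n}\mean{u^\eps}, \mean{w^\eps}\rangle_\Gamma$. Cauchy--Schwarz, the trace inequality $\|\mean{w^\eps}\|_{H^{1/2}(\Gamma)} \lesssim \|\mean{w^\eps}\|_{H^1(\Omega_W)}$, Poincar\'e (from $\mean{w^\eps}|_{\Gamma_W} = 0$), and the $H^{-1/2}$-bound on $\partial_{\mathbf n}\mean{u^\eps}$ close the estimate, again uniformly in $\eps$.

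Finally, for $k \geq 1$, since the translated spatial operators are time-independent and the construction of Theorem \ref{thm:bostan} delivers enough smoothness, the functions $\partial_t^k u^\eps$ and $\partial_t^k w^\eps$ are themselves $T$-periodic and solve the same translated system---with the same coupling and boundary conditions---driven by $\partial_t^k f$ and $\partial_t^k g$. The $k = 0$ argument thus applies verbatim and delivers the stated estimates. The main subtlety I anticipate is making rigorous the normal trace $\partial_{\mathbf n}\mean{u^\eps}|_\Gamma$ on a Lipschitz interface and its restriction in the distributional sense to $\Gamma \subset \partial\Omega_H$; this is handled exactly as in the definition of $\mathcal{D}(\mathbf{A})$ earlier, via the globally defined normal trace in $H^{-1/2}(\partial\Omega_H)$ restricted to $\Gamma$.
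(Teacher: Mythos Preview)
Your proposal is correct and follows essentially the same route as the paper: time-average the translated system to obtain the decoupled elliptic problems for $\mean{u^\eps}$ and $\mean{w^\eps}$ with the stated boundary data, apply Lax--Milgram (testing against the solution itself) for the heat mean, extract the $H^{-1/2}(\Gamma)$ bound on $\partial_{\mathbf n}\mean{u^\eps}$ via the generalized Green formula, feed this as Neumann data into the mixed elliptic problem for $\mean{w^\eps}$, and differentiate in time for $k\geq 1$. The paper also flags the same normal-trace subtlety you raise and resolves it via the $H^{-1/2}$ trace for $L^2$ vector fields with divergence in $H^1_{\Gamma_H}(\Omega_H)'$.
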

\begin{proof}
	Analogous to the derivation of \eqref{MeanValueHeatEq}, we find that $\mean{u^{\eps}}$
	satisfies the following decoupled equation:
	$$
	-\Delta \mean{u^{\eps}}+\eps\mean{u^{\eps}}=\mean{f}\; \text{ in } \Omega_H,~
	\quad 
	\mean{u^{\eps}}=0\;{\rm on}\;\Gamma.
	$$
	Therefore,  standard elliptic theory (Lax-Millgram) yields \eqref{MeanHeatEst} for $k = 0$. To obtain estimates for $k > 0$, we differentiate the equation in time. We  note that time differentiation commutes with taking the mean value, and we may invoke the Poincar\'e inequality. As before in Section \ref{Sec:invariances}, this yields the following estimate for the normal trace $\gamma_1[\partial_t^ku^{\epsilon}]$:
	\begin{align*}
		\|\partial_{\mathbf n}\mean{\partial_t^ku^{\eps}}\|_{H^{-1/2}(\Gamma)}
		\lesssim \left (\|\nabla \mean{\partial_t^ku^{\eps}}\|_{L^2(\Omega_H)}
		+\|\Delta \mean{\partial_t^ku^{\eps}}\|_{H_{\Gamma_{H}}^1(\Omega_H)'}\right )
		\lesssim \|\mean{\partial_t^k f}\|_{H_{\Gamma_{H}}^1(\Omega_H)'}.
	\end{align*}
	Now, the estimate in \eqref{MeanWaveEst} again follows from standard elliptic theory. We recall  that $w^{\eps}$ satisfy the following boundary value problem:
	$$
	-\Delta \mean{w^{\eps}}+\eps^2\mean{w^{\eps}}=\mean{g}~\text{ in }~\Omega_W,
	\quad 
	\partial_{\mathbf n} \mean{w^{\eps}}=\partial_{\mathbf n} \mean{u^{\eps}} \text{ on} \;\Gamma,
	\; \mean{w^{\eps}}=0\;~\text{ on } \;\Gamma_W.
	$$
	The estimates then follow exactly as  in the previous step.
\end{proof}

\begin{remark}
Here we used the following version of the standard result on existence of normal traces in $H^{-\frac{1}{2}}$. Let ${\bf v}\in \mathbf L^2(\Omega_H)$ such that ${\rm div}~{\bf v}\in (H^1_{\Gamma_H}(\Omega))'$. Then ${\bf v}\cdot\nrml\in H^{-\frac{1}{2}}(\Gamma)$ and the following estimate holds:
$$
\| {\bf v}\cdot\nrml\|_{H^{-\frac{1}{2}}(\Gamma)}
\leq C\left (\|{\bf v}\|_{L^2(\Omega_H)}+\|{\rm div}{\bf v}\|_{(H^1_{\Gamma_H}(\Omega_H))'}\right ).
$$
The proof is analogous to the standard proof. Namely, for smooth $v$ and $\phi$ using integration by parts we have:
$$
\int_{\Gamma} ({\bf v}\cdot\nrml) \phi=\int_{\Omega_H} {\bf v}\cdot\nabla\phi+\int_{\Omega_H}({\rm div} {\bf v})\phi.
$$
Now, let $E:H^{\frac{1}{2}}(\Gamma)\to H^1_{\Gamma_H}(\Omega_H)$ be an extension operator. We define the normal trace as follows:
$$
{}_{H^{-\frac{1}{2}}(\Gamma)}\langle {\bf v}\cdot\nrml,\phi\rangle_{H^{\frac{1}{2}}(\Gamma)}
\equiv\int_{\Omega_H}{\bf v}\cdot\nabla E\phi
+\langle{\rm div}{\bf v},E\phi\rangle_{H^{1}_{\Gamma_H}(\Omega_H)'\times H^{1}_{\Gamma_H}(\Omega_H)}.
$$
\end{remark}

In the next step we need to estimate the mean-free part $\widetilde{w}^{\eps}=w^{\eps}-\mean{w^{\eps}}$. Since the overdetermined problem studied in Section \ref{Sec:Hyp} satisfies a homogeneous Neumann condition---which is not the case for $\tilde{w}$---we construct an extension to homogenize the Neumann boundary conditions.

\begin{lemma}[On Harmonic Extension]\label{ExtnesionLemma}
	There { exists} a $T$-periodic harmonic extension $e^{\eps}$ of $\partial_{\mathbf n} \widetilde{u^{\eps}}=\partial_{\mathbf n} u^{\eps}-\partial_{\mathbf n}\mean{u^{\eps}} $ to $\Omega_W$ in the following sense: 
	\begin{itemize} \item $e^{\eps}(t)\in H_{\Gamma_W}^1(\Omega_W)$, 
	\item  for every pair of $T$-periodic test functions $(\psi,\phi)$ such that $\psi|_{\Gamma_{int}}=\phi|_{\Gamma_{int}}$, $\phi|_{\Gamma_H}=\psi|_{\Gamma_W}=0$, the following equality is satisfied:
	\begin{align}\nonumber
		\int_0^T\int_{\Omega_W}\nabla e^{\eps}\nabla\phi&=	-\int_0^T\int_{\Omega_H}\partial_t\widetilde{u}^{\eps}\psi-\int_0^T\int_{\Omega_H}\nabla \widetilde{u}^{\eps}\cdot\nabla\psi
		\\\label{HarmonicExtWeak}
		&\quad -\eps\int_0^T\int_{\Omega_H}\widetilde{u}^{\eps}\psi
		+\int_0^T\int_{\Omega_H}\tilde{f}\psi,
	\end{align}
where $\widetilde{f}=f-\mean{f}$.
\end{itemize}
	Moreover the following estimate is satisfied for $k\in\N_0$:
	\begin{align}\label{harmonicestimate}
		\|\partial_t^k \nabla e^{\eps}\|_{L^2(0,T;L^2(\Omega_W))}\leq &~	C\Big (
		\|\partial^{k+1}_t \widetilde{u}^{\eps}\|_{L^2(0,T;H_{\Gamma_H}^1(\Omega_H)')}+\|\partial_t^k\nabla \widetilde{u}^{\eps}\|_{L^2(0,T;L^2(\Omega_H))} \\ \nonumber
& 		+\|\partial_t^k \tilde{f}\|_{L^2(0,T;H_{\Gamma_H}^1(\Omega_H)')}\Big),
	\end{align}
	where the constant $C$ {\em does not depend on $\eps$}.
\end{lemma}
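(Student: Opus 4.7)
The plan is to construct $e^{\eps}(t)$, at each $t\in[0,T]$, as the unique weak solution of the mixed elliptic problem
\[
-\Delta e^{\eps}=0\text{ in }\Omega_W,\qquad e^{\eps}=0\text{ on }\Gamma_W,\qquad \partial_{\bn} e^{\eps}=\partial_{\bn}\widetilde{u}^{\eps}\text{ on }\Gamma,
\]
with the Neumann datum on $\Gamma$ interpreted in the duality sense via the heat PDE. Fix a bounded linear extension operator $\mathcal{E}\colon H^{1/2}(\Gamma)\to H^1_{\Gamma_H}(\Omega_H)$. For $\phi\in H^1_{\Gamma_W}(\Omega_W)$, let $\psi:=\mathcal{E}(\phi|_\Gamma)\in H^1_{\Gamma_H}(\Omega_H)$ and define the functional
\[
F_t(\phi)=-\int_{\Omega_H}\partial_t\widetilde{u}^{\eps}(t)\psi-\int_{\Omega_H}\nabla\widetilde{u}^{\eps}(t)\cdot\nabla\psi-\eps\int_{\Omega_H}\widetilde{u}^{\eps}(t)\psi+\langle\widetilde{f}(t),\psi\rangle.
\]
The crucial point is that $F_t(\phi)$ depends only on $\phi|_\Gamma$ (not on the chosen extension): if two extensions $\psi_1,\psi_2$ share the same trace, then $\psi_1-\psi_2\in H^1_0(\Omega_H)$, and the difference of the corresponding expressions vanishes by the weak formulation of the translated heat equation for $\widetilde{u}^{\eps}$ tested against $\psi_1-\psi_2$. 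By continuity of the trace and of $\mathcal{E}$, $F_t$ is a bounded linear functional on $H^1_{\Gamma_W}(\Omega_W)$.

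With well-definedness in hand, Lax--Milgram applied to the coercive bilinear form $a(u,v)=\int_{\Omega_W}\nabla u\cdot\nabla v$ on $H^1_{\Gamma_W}(\Omega_W)$ (coercive by Poincar\'e, since $\Gamma_W$ has positive surface measure) produces a unique $e^{\eps}(t)\in H^1_{\Gamma_W}(\Omega_W)$ with $a(e^{\eps}(t),\phi)=F_t(\phi)$ for all admissible $\phi$. Linearity in $t$ of $\mathcal{E}$ and $T$-periodicity of $\widetilde{u}^{\eps}$ and $\widetilde{f}$ transfer directly to give $e^{\eps}(T)=e^{\eps}(0)$; integrating the pointwise identity in time and invoking an arbitrary $\psi$ (matching $\phi|_\Gamma$) recovers the required weak identity \eqref{HarmonicExtWeak}.

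For the estimate \eqref{harmonicestimate} at $k=0$, test against $\phi=e^{\eps}(t)$ and choose $\psi=\mathcal{E}(e^{\eps}(t)|_\Gamma)$, whose norm satisfies $\|\psi\|_{H^1(\Omega_H)}\lesssim \|e^{\eps}(t)|_\Gamma\|_{H^{1/2}(\Gamma)}\lesssim \|\nabla e^{\eps}(t)\|_{L^2(\Omega_W)}$ by the trace theorem and Poincar\'e. Cauchy--Schwarz on each right-hand-side term yields
\[
\|\nabla e^{\eps}(t)\|_{L^2(\Omega_W)}\lesssim \|\partial_t\widetilde{u}^{\eps}(t)\|_{H^1_{\Gamma_H}(\Omega_H)'}+\|\nabla\widetilde{u}^{\eps}(t)\|_{L^2(\Omega_H)}+\|\widetilde{f}(t)\|_{H^1_{\Gamma_H}(\Omega_H)'},
\]
where the zeroth-order term $\eps\int\widetilde{u}^{\eps}\psi$ is absorbed into $\|\nabla\widetilde{u}^{\eps}\|_{L^2}$ via Poincar\'e (with $\eps$-factor strictly improving the bound). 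Integrating in $t$ gives the $k=0$ estimate with $\eps$-independent constants. For $k\ge 1$, Theorem~\ref{thm:bostan} guarantees that the approximate solution is smooth in time, so we may differentiate the variational identity $k$ times: $\partial_t^k e^{\eps}$ solves the identical mixed problem driven by $\partial_t^k\widetilde{u}^{\eps}$ and $\partial_t^k\widetilde{f}$, and the argument of Step~3 repeats verbatim.

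The main obstacle is the well-definedness in Step~1: one must rely critically on the fact that $\widetilde{u}^{\eps}$ is a weak solution of the translated heat equation in $\Omega_H$, so that the functional is insensitive to the choice of $H^1$-extension through $\Gamma$. Everything else---coercivity, periodicity, and the estimate---is a routine application of Lax--Milgram combined with bounded trace/extension maps; the extension operator $\mathcal{E}$ appears on both sides of the estimate and its norm is $\eps$-independent, which is essential for the uniform-in-$\eps$ bound needed later when passing to the limit $\eps\searrow 0$.
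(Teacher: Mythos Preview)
Your proposal is correct and follows essentially the same route as the paper: define the functional $F_t$ on $H^1_{\Gamma_W}(\Omega_W)$ via a trace/extension operator into $H^1_{\Gamma_H}(\Omega_H)$, apply Lax--Milgram pointwise in $t$, test with $e^{\eps}$ for the estimate, and differentiate in time for $k\ge 1$. The paper treats the construction as standard and presents only the formal estimates; your additional care in verifying that $F_t(\phi)$ is independent of the chosen extension (via the weak heat equation tested against $H^1_0(\Omega_H)$ functions) is a welcome elaboration that the paper omits, but it does not constitute a different approach.
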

\begin{proof}
	Here we present formal estimates, since the harmonic extension can be constructed in a standard way. Moreover, since we consider a stationary problem, let us fix $t\in [0,T]$ and define $F(t)$ as
	$$
	\langle F(t),\phi \rangle \equiv  -\int_{\Omega_H}\partial_t\widetilde{u}^{\eps}(t)\psi-\int_{\Omega_H}\nabla \widetilde{u}^{\eps}(t)\cdot\nabla\psi
	-\eps\int_{\Omega_H}\widetilde{u}^{\eps}(t)\psi
	+\int_{\Omega_H}\widetilde{f}(t)\psi,
	$$
	where $\phi\in H_{\Gamma_W}^1(\Omega_W)$ and $\psi=E_{\Omega_H}(\phi|_{\Gamma})$ for some $E_{\Omega_H}$  an extension operator (e.g., harmonic) from $\Gamma$ to $\Omega_H$,  {  which has zero boundary values on $\Gamma_H$}.
	
	Then the following estimate holds:
	\begin{align*}
	|\langle F(t),\phi \rangle |\lesssim & ~
	\left (\|\partial_t \widetilde{u}^{\eps}\|_{H_{\Gamma_H}^1(\Omega_H)'}+\|\nabla \widetilde{u}^{\eps}\|_{\mathbf L^2(\Omega_H)}
	+\eps\|\widetilde{u}^{\eps}\|_{L^2(\Omega_H)}+\|\widetilde{f}\|_{H_{\Gamma_H}^1(\Omega_H)'}
	\right ) \|\psi\|_{H^1(\Omega_H)}\\
	\lesssim & ~\left (\|\partial_t \widetilde{u}^{\eps}\|_{(H^1(\Omega_H)_{\Gamma_H})'}+\|\nabla \widetilde{u}^{\eps}\|_{\mathbf L^2(\Omega_H)}
	+\eps\|\widetilde{u}^{\eps}\|_{L^2(\Omega_H)}+\|\widetilde{f}\|_{H_{\Gamma_H}^1(\Omega_H)'}
	\right ) \|\phi\|_{H^1(\Omega_W)}.
	\end{align*}
	Therefore $F(t)\in H_{\Gamma_W}^1(\Omega_W)'$ and hence the weak problem 
	$$
	\int_{\Omega_W}\nabla e^{\eps}\nabla\phi=\langle F(t),\phi \rangle ,\quad \phi\in H_{\Gamma_W}^1(\Omega_W)
	$$
	has a unique solution. This defines the harmonic extension $e^{\eps}$, as desired. 
	
	Finally notice that the above problem can be solved for a.e. $t\in (0,T)$ and the right hand side may be $T$-periodic, and therefore $e^{\eps}$ is $T$-periodic in the same sense. The estimate for $e^{\eps}$ is obtained by utilizing the test function $\phi=e^{\eps}$ and integrating $\int_0^T$. Higher order estimates are obtained by differentiating the whole system in the time variable.
\end{proof}

Now, we are ready to produce the final estimate for the mean-free part of the approximate solution $(u^{\epsilon},w^{\epsilon})$. Recall that $\widetilde f= f -\langle f \rangle.$
\begin{theorem}\label{UniformEstamtesTm}
	Let $\widetilde{f}\in H^4_{\sharp}(0,T;L^2(\Omega_H))$, $\widetilde{g}\in H^8_\sharp(0,T;L^2(\Omega_W))$, and let $(u^{\eps},w^{\eps})$ be a smooth solution to the regularized problem \eqref{eq:solid*}--\eqref{eq:periodic*}, as established in the previous sections. Additionally, we assume that $\Omega_W$ has the elliptic regularity property w.r.t. $\flow.$
	\begin{enumerate}
		\item If $(\Omega_W,\Gamma_W)$ satisfies the Generalized Optics Condition \ref{def:GOC}, then the following estimate for the mean-free part of the approximate solution holds:
		\begin{align}\nonumber
			\|\nabla \widetilde{w}^{\eps}\|_{L^2(0,T;L^2(\Omega_W))}+\|\partial_t \widetilde{w}^{\eps}\|_{L^2(0,T;L^2(\Omega_W))}
			+\|\nabla \widetilde{u}^{\eps}\|^2_{H^3(0,T;L^2(\Omega_H))}
			\\\label{UnifromEstimatesStrong}
			\leq C\left (
			\|\widetilde{f}\|^2_{H^3(0,T;L^2(\Omega_H))}
			+\|\widetilde{g}\|^2_{H^6(0,T;L^2(\Omega_W))}
			\right ).	
		\end{align}
	\item If $(\Omega_W,\Gamma_W)$ satisfies the Graph Optics Condition \ref{def:GGC}, then the following estimate for the mean-free part of the approximate solution holds:
		\begin{align}\nonumber
		\|\nabla \widetilde{w}^{\eps}\|_{L^2(0,T;L^2(\Omega_W))}+\|\partial_t \widetilde{w}^{\eps}\|_{L^2(0,T;L^2(\Omega_W))}
		+\|\nabla \widetilde{u}^{\eps}\|^2_{H^4(0,T;L^2(\Omega_H))}
		\\\label{UnifromEstimates}
		\leq C\left (
		\|\widetilde{f}\|^2_{H^4(0,T;L^2(\Omega_H))}
		+\|\widetilde{g}\|^2_{H^8(0,T;L^2(\Omega_W))}
		\right ).	
	\end{align}
	\end{enumerate}
\end{theorem}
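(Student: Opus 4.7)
My plan is to reduce the estimates for $\tilde w^\eps$ to an application of Corollary \ref{cor:key} by first homogenizing the non-trivial Neumann interface datum, and then to close the resulting coupling with the heat component by iterated time-differentiation, exploiting the linearity and periodicity of the system. I would first set $\bar w^\eps := \tilde w^\eps - e^\eps$, where $e^\eps$ is the harmonic extension from Lemma \ref{ExtnesionLemma}. Since $\Delta e^\eps = 0$ in $\Omega_W$ and, by construction, $\partial_{\bn} e^\eps = \partial_{\bn} \tilde u^\eps = \partial_{\bn} \tilde w^\eps$ on $\Gamma$, the function $\bar w^\eps$ vanishes on $\Gamma_W$, has zero Neumann trace on $\Gamma$, and solves the wave equation with modified right-hand side $\tilde g - \partial_{tt} e^\eps - 2\eps\,\partial_t(\bar w^\eps + e^\eps) - \eps^2(\bar w^\eps + e^\eps)$; the Dirichlet trace $\partial_t \bar w^\eps|_\Gamma = \tilde u^\eps - \partial_t e^\eps$ has zero mean in $t$ by periodicity, so the primitive $\bar H^\eps$ in \eqref{PrimitiveH} is well-defined. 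Corollary \ref{cor:key} then applies, and the lower-order $O(\eps)$ contributions are absorbed into the left-hand side via Young's inequality for $\eps$ sufficiently small, producing an $\eps$-uniform wave bound in terms of $\tilde g$, $\partial_{tt} e^\eps$ and $\bar H^\eps$.

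The remaining task is to quantify $\partial_{tt} e^\eps$ and $\bar H^\eps$ and to close the loop by bounding the relevant time-derivatives of $\tilde u^\eps$. Lemma \ref{ExtnesionLemma} with $k=2$ provides $\|\partial_{tt} e^\eps\|_{L^2(H^1)}$ in terms of $\|\partial_t^3 \tilde u^\eps\|_{L^2(H^{-1})} + \|\partial_t^2 \nabla \tilde u^\eps\|_{L^2(L^2)} + \|\partial_t^2 \tilde f\|_{L^2(H^{-1})}$, and the extra temporal derivative on $\tilde u^\eps$ may be absorbed by using the heat equation to rewrite $\partial_t \tilde u^\eps = \Delta \tilde u^\eps + \tilde f - \eps \tilde u^\eps$, trading a time derivative on $\tilde u^\eps$ for one on $\tilde f$. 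The key observation for closing the coupling is that, because the system is linear and $T$-periodic, $\partial_t^k(\tilde u^\eps, \tilde w^\eps)$ is itself a periodic solution of the same coupled system with right-hand sides $(\partial_t^k \tilde f, \partial_t^k \tilde g)$; testing over a full period with $(\partial_t^k \tilde u^\eps, \partial_t^{k+1}\tilde w^\eps)$ and exploiting the exact cancellation of the $\Gamma$-boundary contributions under the interface conditions (exactly as in the proof of Theorem \ref{th:uniqueness}) produces a coupled energy identity that bounds $\|\nabla \partial_t^k \tilde u^\eps\|_{L^2_t L^2}$ in terms of $\partial_t^k \tilde f$, $\partial_t^k \tilde g$, and already-controlled wave quantities. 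Combining this with the $k$-differentiated wave estimate yields a closed iteration whose derivative count (two from $\partial_{tt} e^\eps$, one from the extension lemma, one from the PDE trade, plus the layers of iteration) produces the announced $H^3$/$H^6$ thresholds in Case 1. Case 2 requires one additional time derivative on each side, because under Condition \ref{def:GGC} the tangential boundary contribution $\int_\Gamma |\nabla_\Gamma \bar H^\eps|^2(\flow\cdot\bn)$ has the wrong sign and must be eliminated by the elliptic-regularity version of the hyperbolic identity developed in the proof of Corollary \ref{cor:key}, which trades a spatial derivative on $\bar H^\eps$ for an extra temporal derivative on the forcing.

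The principal technical obstacle will be the careful bookkeeping of temporal regularity through the composite chain harmonic lift $\to$ Neumann trace $\to$ primitive construction $\to$ hyperbolic estimate $\to$ heat--wave coupling, while ensuring that all $\eps$-dependent terms remain uniformly controllable as $\eps \searrow 0$. The E-property enters because the $H^{3/2^+}(\Omega_W)$ interior regularity required to apply Lemma \ref{KeyEstimateLemma} may fail at boundary triple points; I would therefore first establish \eqref{UnifromEstimatesStrong}--\eqref{UnifromEstimates} on a sequence of approximating smoother domains $\Omega_W^n$ satisfying Definition \ref{EllipticRegCond}, and then pass to the limit using the strong convergence $\chi_{\Omega_W^n}\to \chi_{\Omega_W}$ provided by Definition \ref{EProperty}. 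Combining the resulting mean-free bounds with the mean-value estimates of Proposition \ref{MeanValueEst} then yields the full inequalities \eqref{UnifromEstimatesStrong} and \eqref{UnifromEstimates}.
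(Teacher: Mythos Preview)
Your overall architecture---homogenize the Neumann datum via $e^\eps$, apply the hyperbolic estimate of Corollary~\ref{cor:key}, then close against the heat component by iterated time-differentiation of the coupled energy identity---matches the paper's strategy. However, there is a genuine gap in Step~1 and a misdiagnosis of the regularity loss in Case~2.

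\textbf{The missing correction step.} You invoke Corollary~\ref{cor:key} directly on $\bar w^\eps=\tilde w^\eps-e^\eps$, but that corollary carries the hypothesis $\flow\cdot\bn\geq 0$ on $\Gamma$, which is \emph{not} part of either Condition~\ref{def:GOC} or Condition~\ref{def:GGC} (those only impose $\flow\cdot\bn\leq 0$ on $\Gamma_W$). Without this sign, the tangential term $\tfrac12\int_\Gamma|\nabla_\Gamma H|^2(\flow\cdot\bn)$ in \eqref{eq:step2} cannot be discarded, and you would be forced back to Proposition~\ref{prop:WaveEst}, which requires $H\in H^1((0,T)\times\Gamma)$---spatial regularity on $\Gamma$ that the trace of $\tilde u^\eps$ does not provide. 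The paper repairs this by redefining the interface: set $\tilde\Gamma_W=\{x\in\partial\Omega_W:\flow\cdot\bn\leq 0\}\supseteq\Gamma_W$ and $\tilde\Gamma=\partial\Omega_W\setminus\overline{\tilde\Gamma_W}\subseteq\Gamma$, then introduce a second harmonic corrector $\beta^\eps$ solving a mixed problem that kills the Dirichlet trace on $\Gamma\setminus\tilde\Gamma$. One then applies Corollary~\ref{cor:key} to $\bar w^\eps=\tilde w^\eps-e^\eps-\beta^\eps$ on the pair $(\tilde\Gamma,\tilde\Gamma_W)$, where the sign hypothesis holds by construction. Your proposal omits this entirely.

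\textbf{The source of the extra derivative in Case~2.} You attribute the additional time regularity under the Graph Optics Condition to a sign problem in the tangential boundary term. That is not the mechanism: once the $\tilde\Gamma$-correction above is made, the sign is favorable in \emph{both} cases. The actual reason is that Corollary~\ref{cor:key}(2) only controls $\|w\|_{L^2}+\|\partial_\flow w\|_{L^2}$, not $\|\partial_t w\|_{L^2}$ or $\|\nabla w\|_{L^2}$. To recover $\|\partial_t\tilde w^\eps\|_{L^2}$ for the closing argument, one must apply the hyperbolic estimate at one higher order of time differentiation (taking $k=1$ instead of $k=0$), which propagates through the extension estimate and forces $l=4$ rather than $l=3$ in the higher-order energy identity \eqref{HOE}. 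Finally, note that the E-property approximation you sketch at the end is not part of \emph{this} theorem's proof: the statement already assumes the elliptic regularity property of Definition~\ref{EllipticRegCond} directly, and the domain approximation occurs later, in the passage from Theorem~\ref{UniformEstamtesTm} to Theorems~\ref{th:main1}--\ref{th:main2}.
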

\begin{proof}
{\bf Step 1: Correction.} The first issue is that we cannot directly make use of Corollary \ref{cor:key}, as we do not have $\flow\cdot{\bf n}\geq 0$ on $\Gamma$. Therefore we define $\widetilde{\Gamma}_W=\{x\in\partial\Omega_{ W}:\flow\cdot{\bf n}\leq 0\}$ and $\widetilde{\Gamma}=\partial\Omega_{ W}\setminus\widetilde{\Gamma}_W$. From the assumed geometric condition  $\mathbf b \cdot \mathbf n \le 0 $ on $\Gamma_W$ in both Condition \ref{def:GOC} and Condition \ref{def:GGC}, it is immediate that $\widetilde{\Gamma}\subset\Gamma$. From \eqref{PrimitiveH} we may define
\begin{align}
	{ \widetilde{w}}^{\eps}(t,x)= \frac{1}{T}\int_0^T\int_s^t \widetilde{u}^{\eps} (\sigma,x)\, d\sigma\, ds \equiv H^{\eps}(t,x), \; x\in\Gamma.
\end{align}
Let $\beta^{\eps}(t,.)\in H^1(\Omega_W)$ be the solution to
$$
-\Delta\beta^{\eps}=0\;{\rm in}\;\Omega_W,\quad
\beta^{\eps}=0\;{\rm on}\;\Gamma_W,\quad
\beta^{\eps}=H^{\eps}(t,.)-e^{\eps}(t,.)\;{\rm on}\;\Gamma\setminus\widetilde{\Gamma},\quad
\partial_{\mathbf n} \beta^{\eps}=0\;{\rm on}\;\widetilde{\Gamma},
$$
{ 
where $e^{\eps}$ is the extension defined in Lemma~\ref{ExtnesionLemma} using $\tilde{\Gamma}$ there.
Let us derive the estimate for the corrector $\beta^{\eps}$. First, notice that $\beta^{\eps}$ is the solution of the stationary problem and the only time dependency comes from the boundary data. Therefore by \eqref{harmonicestimate}, for $k\in\N_0$, we have the following estimate:
\begin{align}\nonumber
&\|\partial_t^k\nabla \beta^{\eps}\|_{L^2(0,T;L^2(\Omega_W))}
\lesssim\|\partial_t^k(H^{\eps}-e^{\eps})\|_{L^2(0,T;H^{1/2}(\Gamma\setminus\widetilde{\Gamma}))}
\\\label{estimatebeta}
&\quad \lesssim\left(\|\partial_t^k\nabla \widetilde{u}^{\eps}\|_{L^2(0,T;L^2(\Omega_H))}
+\|\partial_t^k\nabla e^{\eps}\|_{L^2(0,T;L^2(\Omega_W))}\right)
\\
&\quad \lesssim \Big (\|\partial^{k+1}_t \widetilde{u}^{\eps}\|_{L^2(0,T;H_{\Gamma_H}^1(\Omega_H)')}+\|\partial_t^k\nabla \widetilde{u}^{\eps}\|_{L^2(0,T;L^2(\Omega_H))} +\|\partial_t^k \tilde{f}\|_{L^2(0,T;H_{\Gamma_H}^1(\Omega_H)')}\Big)
\nonumber
\end{align}
}
Next $\overline{w}^{\eps}\equiv\widetilde{w}^{\eps}-e^{\eps}-\beta^{\eps}$ satisfy the following over-determined problem for the wave equation:
\begin{align}
	\label{fsi}
	\begin{aligned}
		\overline{w}^{\eps}_{tt} -\Delta \overline{w}^{\eps} &= \overline{g}^{\eps}\text{ on }[0,T]\times \Omega_W,\quad \overline{w}^{\eps}(0)=\overline{w}^{\eps}(T),\quad \overline{w}^{\eps}_t(0)=\overline{w}^{\eps}_t(T),
		\\
		{ \overline{w}^{\eps}}&= H^{\eps}-{ e^{\eps}-\beta^{\eps} \text{ and }} \partial_{\mathbf n} { \overline{w}^{\eps}}= 0 \text{ on }[0,T]\times \widetilde{\Gamma},
		\\
		{ \overline{w}^{\eps}} &=0\text{ on }\widetilde{\Gamma}_W,
	\end{aligned}
\end{align}
where we have denoted 
$$\overline{g}^{\eps}\equiv\widetilde{g}+\partial_{tt}(e^{\eps}+\beta^{\eps})-2\eps\partial_t \widetilde{w}^{\eps}-\eps^2\widetilde{w}^{\eps}.$$ 
{  Corollary \ref{cor:bostan} implies that $u^{\eps}$ and $w^{\eps}$ are smooth-in-time periodic functions, which in turn implies that $\overline{g}^{\eps}$ is a smooth-in-time periodic function.} In order to further simplify the notation, we denote $$E^{\eps}=e^{\eps}+\beta^{\eps}$$ and find that $E^{\eps}$, by construction { \eqref{estimatebeta}}, satisfies the estimate { \eqref{harmonicestimate}} from Lemma \ref{ExtnesionLemma}.

In particular $\overline{w}^{\eps}$ satisfy the following mixed boundary value problem for every fixed $t$:
$$
-\Delta \overline{w}^{\eps}=-\overline{w}^{\eps}_{tt}+\overline{g}^{\eps}\;{\rm in}\;\Omega_W,
\quad \overline{w}^{\eps}=0\;{\rm on}\;\tilde{\Gamma}_W,
\quad \partial_n\overline{w}^{\eps}=0\;{\rm on}\;\tilde{\Gamma}.
$$
Since $\Omega_W$ satisfy the elliptic regularity property we have $\overline{w}^{\eps}\in C([0,T];H^{\frac{3}{2}^+}(\Omega))$. Hence $\overline{w}^{\eps}$ is a over-determined weak energy solution satisfying all assumptions of Corollary \ref{cor:key}. 

\vskip.2cm
\noindent {\bf Step 2: Estimates with Generalized Optics Condition}.
Corollary \ref{cor:key} immediately implies the following estimate { for $k\in\N_0$}:
\begin{align*}
\|\partial^k_t\nabla\bar{w}^{\eps}\|_{L^{2}(0,T;\mathbf L^2(\Omega_W))}
+\|\partial^{k+1}_t\bar{w}^{\eps}\|_{L^{2}(0,T;L^2(\Omega_W))}
\lesssim
\|\partial^k_t\bar{g}^{\eps}\|_{L^2(0,T;L^2(\Omega_W))}+\|\partial^k_t (H^{\eps}-{ E^{\eps}})\|_{H^1(0,T;L^2(\widetilde{\Gamma}))}
\end{align*}
By the triangle inequality inequality, Lemma \ref{ExtnesionLemma} and definitions of $\bar{g}^{\eps}$ and $H^{\eps}$ above, we have:
\begin{align*}
\|\partial^k_t\nabla \widetilde{w}^{\eps}\|_{L^{2}(0,T;\mathbf L^2(\Omega_W))}
+&\|\partial^{k+1}_t\widetilde{w}^{\eps}\|_{L^{2}(0,T;L^2(\Omega_W))} \\ 
\lesssim  &~
\|\partial^k_t\nabla { E}^{\eps}\|_{L^2(0,T;\mathbf L^2(\Omega_W))}+\|\partial^{k+1}_t E^{\eps}\|_{L^2(0,T;L^2(\Omega_W))}\\ 
& + \|\partial^k_t\widetilde{g}+\partial^{k+2}_t E^{\eps}-2\eps\partial^{k+1}_t \widetilde{w}^{\eps}-\eps^2\partial^k_t\widetilde{w}^{\eps}\|_{L^2(0,T;L^2(\Omega_W))} \\ & +\|{ \partial^{k+1}_t\widetilde{u}^{\eps}}\|_{L^2(0,T;L^2(\Gamma))}{ +\|\partial_t^{k+1} E^{\eps}\|_{L^2(0,T;L^2(\Gamma))}}.
\\
\lesssim & ~
\|\partial^{k+3}_t \widetilde{u}^{\eps}\|_{L^2(0,T;(H_{\Gamma_H}^1(\Omega_H))')}+\|\partial^{k+2}_t\nabla \widetilde{u}^{\eps}\|_{L^2(0,T;\mathbf L^2(\Omega_H))} \\
&+ \|\partial^{k+2}_t \widetilde{f}\|_{L^2(0,T;H_{\Gamma_H}^1(\Omega_H)')}+\eps\|\partial^{k+1}_t \widetilde{w}^{\eps}\|_{L^2(0,T;L^2(\Omega_W))}\\ 
&+\|\partial^k_t \widetilde{g}\|_{L^2(0,T;L^2(\Omega_W))}
+\eps^2\|\partial^k_t \widetilde{w}^{\eps}\|_{L^2(0,T;L^2(\Omega_W))}
\end{align*}
Notice that, for $\eps$ sufficiently small, the last terms on the right hand side (RHS) can be absorbed in the LHS.

To close the estimate, we combine the obtained estimate with a higher order energy estimate. {  Since, by Corollary \ref{cor:bostan}, $\widetilde{u}^{\eps}$ and $\widetilde{w}^{\eps}$ are smooth-in-time periodic functions}, we can take \begin{equation} \psi=\sum_{k=0}^l (-1)^k\partial^{2k}_t\widetilde{u}^{\eps},~~~~\phi=\sum_{k=0}^l (-1)^k\partial^{2k+1}_t\widetilde{w}^{\eps},\end{equation} in \eqref{WeakReg} (note that the mean-free part of the solution satisfies the same type of problem as the original problem) and obtain
\begin{align}\nonumber
\sum_{k=0}^l\|\partial_t^k\nabla \widetilde{u}^{\eps}\|^2_{L^2(0,T;\mathbf L^2(\Omega_H))}
+&\eps\sum_{k=0}^l\|\partial_t^k \widetilde{u}^{\eps}\|^2_{L^2(0,T;L^2(\Omega_H))}
+2\eps\sum_{k=0}^l\|\partial^{k+1}_t\widetilde{w}^{\eps}\|^2_{L^2(0,T;L^2(\Omega_W))}
\\
\label{HOE}
=& \int_0^T\int_{\Omega_H}\widetilde{f}\sum_{k=0}^l (-1)^k\partial^{2k}_t\widetilde{u}^{\eps}
+\int_0^T\int_{\Omega_W}\widetilde{g} \sum_{k=0}^l (-1)^k\partial^{2k+1}_t\widetilde{w}^{\eps} \\
\equiv& ~ RHS
\end{align}
To complete the estimates, we integrate by parts with respect to the time variable on the right-hand side and take $l = 3$. In addition, we apply the previously derived hyperbolic estimate for $\|\partial_t\widetilde{w}^{\eps}\|_{L^2(0,T;L^2(\Omega_W))}$.

\begin{align*}
RHS= & ~
\int_0^T\int_{\Omega_H}\left (\widetilde{f}\widetilde{u}^{\eps}-\widetilde{f}\partial^2_t\widetilde{u}^{\eps}-\partial_t\widetilde{f}\partial^3_t \widetilde{u}^{\eps}
+\partial^3_t \widetilde{u}^{\eps}\partial^3_t \widetilde{f}\right )
+\int_0^T\int_{\Omega_W}\partial_t \widetilde{w}^{\eps}\sum_{k=0}^3(-1)^k\partial^{2k}_t\widetilde{g}
\\
\leq & ~ \|f\|_{H^3(0,T;L^2(\Omega_H))}\sum_{k=0}^3\|\partial^k_t \widetilde{u}^{\eps}\|_{L^2(0,T;L^2(\Omega_H))}
+\|\widetilde{g}\|_{H^6(0,T;L^2(\Omega_W))}\|\partial_t \widetilde{w}^{\eps}\|_{L^2(0,T;L^2(\Omega_W))}
\\
\leq& ~
\frac{C}{\delta}\left (
\|\widetilde{f}\|^2_{H^3(0,T;L^2(\Omega_H))}+\|\widetilde{g}\|^2_{H^6(0,T;L^2(\Omega_W))}
\right )
+\delta \sum_{k=0}^3\|\partial_t^k\nabla \widetilde{u}^{\eps}\|^2_{L^2(0,T;\mathbf L^2(\Omega_H))}.
\end{align*}
Now the $\delta$ term can be absorbed in the left-hand side and thus estimate \eqref{UnifromEstimatesStrong} is proved.

\vskip.2cm \noindent {\bf Step 3: Estimates with Graph Optics Condition}.
We apply Corollary \ref{cor:key} to $\overline{w}$ once again, but this time we obtain a weaker estimate since only the  Graph Optics Condition is satisfied. The remainder of the estimate is analogous to Step 2, and for $k\in\N_0$ we obtain.
\begin{align*}
	\|\partial^k_t\widetilde{w}^{\eps} & \|_{L^{2}(0,T;L^2(\Omega_W))}
	+  \|\partial^k_t\partial_{\flow}\widetilde{w}^{\eps}\|_{L^{2}(0,T;L^2(\Omega_W))}
	\\
	\lesssim & ~
	\|\partial^{k+3}_t \widetilde{u}^{\eps}\|_{L^2(0,T;H_{\Gamma_H}^1(\Omega_H)')}+\|\partial^{k+2}_t\nabla \widetilde{u}^{\eps}\|_{L^2(0,T;\mathbf L^2(\Omega_H))}
	+\|\partial^{k+2}_t \widetilde{f}\|_{L^2(0,T;H_{\Gamma_H}^1(\Omega_H)')} \\
	+& \|\partial^k_t \widetilde{g}\|_{L^2(0,T;L^2(\Omega_W))}
	+\eps\|\partial^{k+1}_t \widetilde{w}^{\eps}\|_{L^2(0,T;L^2(\Omega_W))}+\eps^2\|\partial^k_t \widetilde{w}^{\eps}\|_{L^2(0,T;L^2(\Omega_W))}
\end{align*}
Note that for small $\eps$, the last term can be absorbed into LHS. The penultimate term, on the other hand, cannot be absorbed immediately and will be handled on the LHS of \eqref{HOE}. We will now follow the same strategy as in Step 2, but since we need to estimate the time derivative of $\widetilde{w}^{\eps}$ using the hyperbolic estimate, we take $k = 1$ in the preceding estimate and, consequently, $l = 4$ in \eqref{HOE}, to obtain the desired result.
\begin{align*}
RHS=& ~
\int_0^T\int_{\Omega_H}\left (\tilde{f}\tilde{u}^{\eps}-\tilde{f}\partial^2_t\tilde{u}^{\eps}+\tilde{f}\partial^4_t \tilde{u}^{\eps}
-\partial^4_t \tilde{u}^{\eps}(\partial^2_t \tilde{f}-\partial^4_t \tilde{f})\right )
+\int_0^T\int_{\Omega_W}\partial_t \tilde{w}^{\eps}\sum_{k=0}^4(-1)^k\partial^{2k}_t\tilde{g}
\\
\leq&~ \|\tilde{f}\|_{H^4(0,T;L^2(\Omega_H))}\sum_{k=0}^4\|\partial^k_t \tilde{u}^{\eps}\|_{L^2(0,T;L^2(\Omega_H))} \\ 
&+\|\tilde{g}\|_{H^8(0,T;L^2(\Omega_W))}\Big (\|\partial_t \tilde{w}^{\eps}\|_{L^2(0,T;L^2(\Omega_W))}+C\eps\|\partial^2_t \tilde{w}^{\eps}\|_{L^2(0,T;L^2(\Omega_W))}\Big )
\\
\leq & ~
\frac{C}{\delta}\left (
\|\tilde{f}\|^2_{H^4(0,T;L^2(\Omega_H))}+\|\tilde{g}\|^2_{H^8(0,T;L^2(\Omega_W))}
\right )
 \\ & +\delta \Big (\sum_{k=0}^4\|\partial_t^k\nabla \tilde{u}^{\eps}\|^2_{L^2(0,T;\mathbf L^2(\Omega_H))}
+C\eps\|\partial^2_t \tilde{w}^{\eps}\|_{L^2(0,T;L^2(\Omega_W))}\Big ).
\end{align*}
By taking $\delta$ small enough we complete the proof of estimate \eqref{UnifromEstimates}
\end{proof}

\subsection{Conclusion of the Argument: Proof of Theorems \ref{th:main1} and \ref{th:main2}}
We may approximate the functions $(f, g)$ as given in the $\epsilon$-translated equations \eqref{eq:solid*}--\eqref{eq:periodic*}  with smooth, $T$-periodic functions, thereby justifying the estimates obtained in Section \ref{unifest}. Additionally, the solution can then be decomposed into its mean value and mean-free components:
$$
(u^{\eps},w^{\eps})=(\mean{u^{\eps}},\mean{w^{\eps}})+(\tilde{u}^{\eps},\tilde{w}^{\eps}).
$$
The mean values can be uniformly estimated in $\epsilon$ using Proposition $\ref{MeanValueEst}$, while the mean-free parts can be uniformly estimated via Theorem $\ref{UniformEstamtesTm}$. Since the problem is linear, these estimates are sufficient to pass to the limit in $\epsilon$ to obtain a weak solution in the sense of Definition \ref{DefWeakApp} with $\epsilon=0$; in turn, we have established the existence of a weak solution (via the identified limits) in the sense of Definition \ref{weaksol}. Uniqueness of said weak solution is established in Theorem $\ref{th:uniqueness}$, thus completing the proofs of the main results under the assumptions that the wave domain has elliptic regularity property from Definition \ref{EllipticRegCond}.

It remains to prove the claim under the assumption that $\Omega_W$ satisfies the E-property (see Definition \ref{EProperty}). Since the steps of this proof are relatively standard, we present only a sketch of the argument.

Let ${\Omega_W^n}$ be a sequence of domains with the elliptic regularity property that approximate $\Omega_W$ in the sense of Definition \ref{EProperty}. Let $(u_n, w_n)$ denote the corresponding weak solutions constructed in the first part of the proof.
On the boundary $\partial \Omega_W^n$, the zero Dirichlet boundary condition is imposed for both unknowns. Consequently, these solutions can be extended by zero to the entire space $\R^d$. At this stage, it suffices to estimate the oscillatory part of the solution, as elliptic regularity is not utilized in estimating the mean-free component. From the first part of the proof, we conclude that the oscillatory parts of the solutions, $(\chi_{\Omega_H}\tilde{u}_n, \chi_{\Omega_W^n}\tilde{w}_n)$, satisfy the uniform estimates provided in Theorem \ref{UniformEstamtesTm}.

Using these uniform estimates, we can extract weakly convergent subsequences in the topologies determined by the estimates. It follows immediately that the limit is of the form $(\chi_{\Omega_H}u, \chi_{\Omega_W}w)$. Furthermore, since the problem is linear, passing to the limit in \eqref{weakdef} is straightforward, and we find that $(u, w)$ is a weak solution in the sense of Definition \ref{weaksol}. Finally, due to the weak lower semi-continuity of norms, all estimates are preserved in the limit, thereby completing the proof.

\qed

{ \textbf{Acknowledgment} We would like to thank the anonymous referee for carefully reviewing our paper and for the insightful comments and suggestions that have significantly improved the quality of this work.}

\section*{Appendix A: Further Examples of Spatial Domains}
In this appendix we include some illustrative domains which further demonstrate the breadth of wave domains for which our results hold.

\subsection*{Example 1: Spirals}\label{appendix:Spiral-Construction}
In this part we construct spiral domains, based on a rotational flow. Consider
\begin{align}
	\flow(x,y) = \left(\begin{matrix}
		-y\\
		x
	\end{matrix}\right)
	+
	\alpha\left(\begin{matrix}
		x\\
		y
	\end{matrix}\right)
\end{align}
with parameter $\alpha\geq 0$. Then
\begin{align*}
	\grad\flow(x,y) &= \left(
	\begin{matrix}
		\alpha & -1 \\
		1 & \alpha
	\end{matrix}
	\right),~~
	\diverg\flow(x,y) = 2\alpha, 
\end{align*} 
and thus 
$$
	\xi^T \grad\flow(x,y) \xi = \alpha \left(\xi_1^2 + \xi_2^2 \right).
$$
This provides the contractive domain property (with the full gradient estimate). Thus, if we construct a boundary part $\Gamma_W$ of $\Omega_W$ in such a way that $\flow\cdot\nrml \leq 0$ is satisfied on $\Gamma_W$, we will have produced a domain satisfying the Generalized Optics Condition \ref{def:GOC}. 
\begin{figure}[!htb]
	\centering
	\begin{minipage}[b]{0.35\textwidth}
		\includegraphics[width=\textwidth]{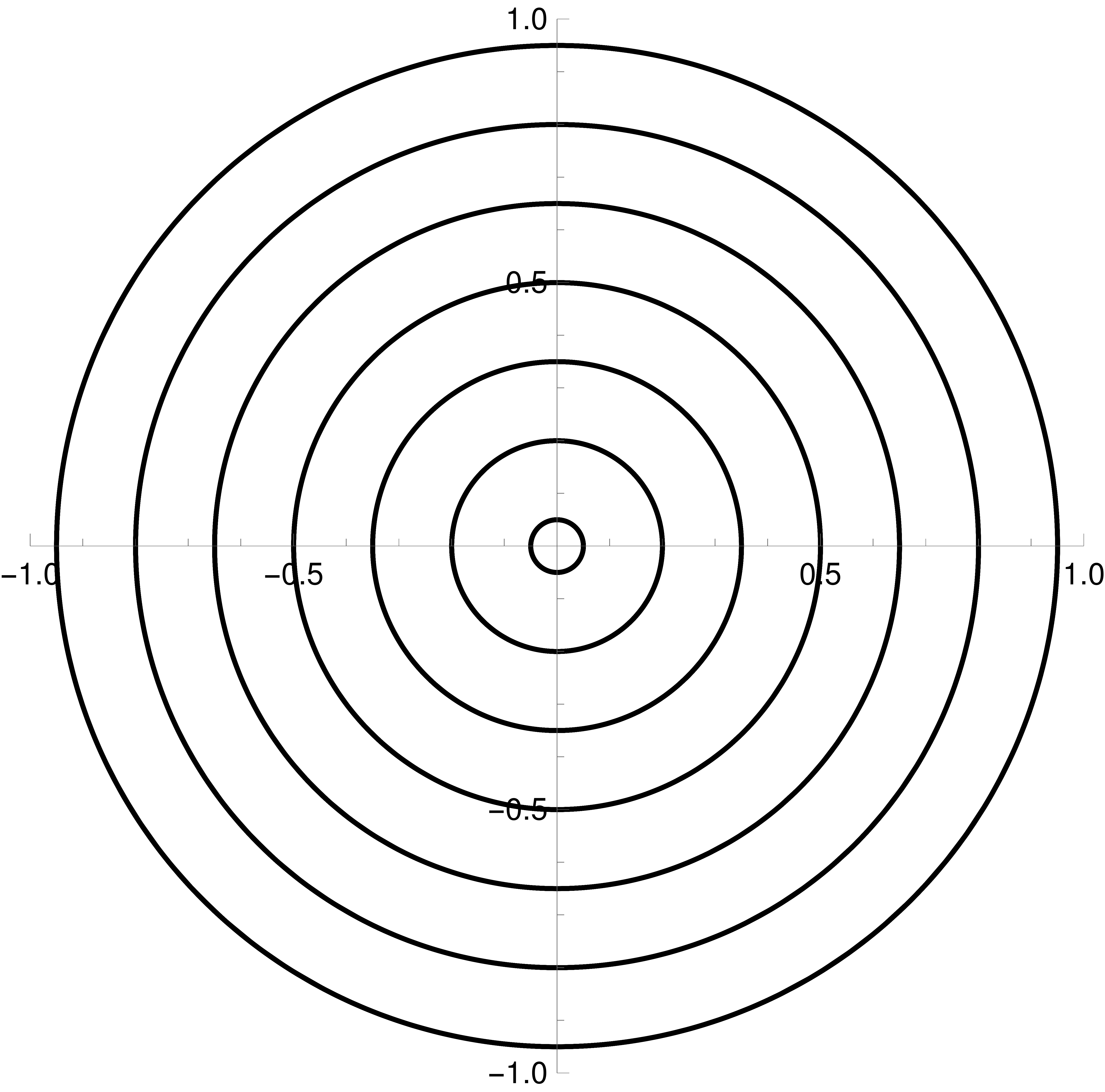}
		\caption{Integral Lines ($\alpha=0$).}
	\end{minipage}
	\hspace{1cm}
	\begin{minipage}[b]{0.35\textwidth}
		\includegraphics[width=\textwidth]{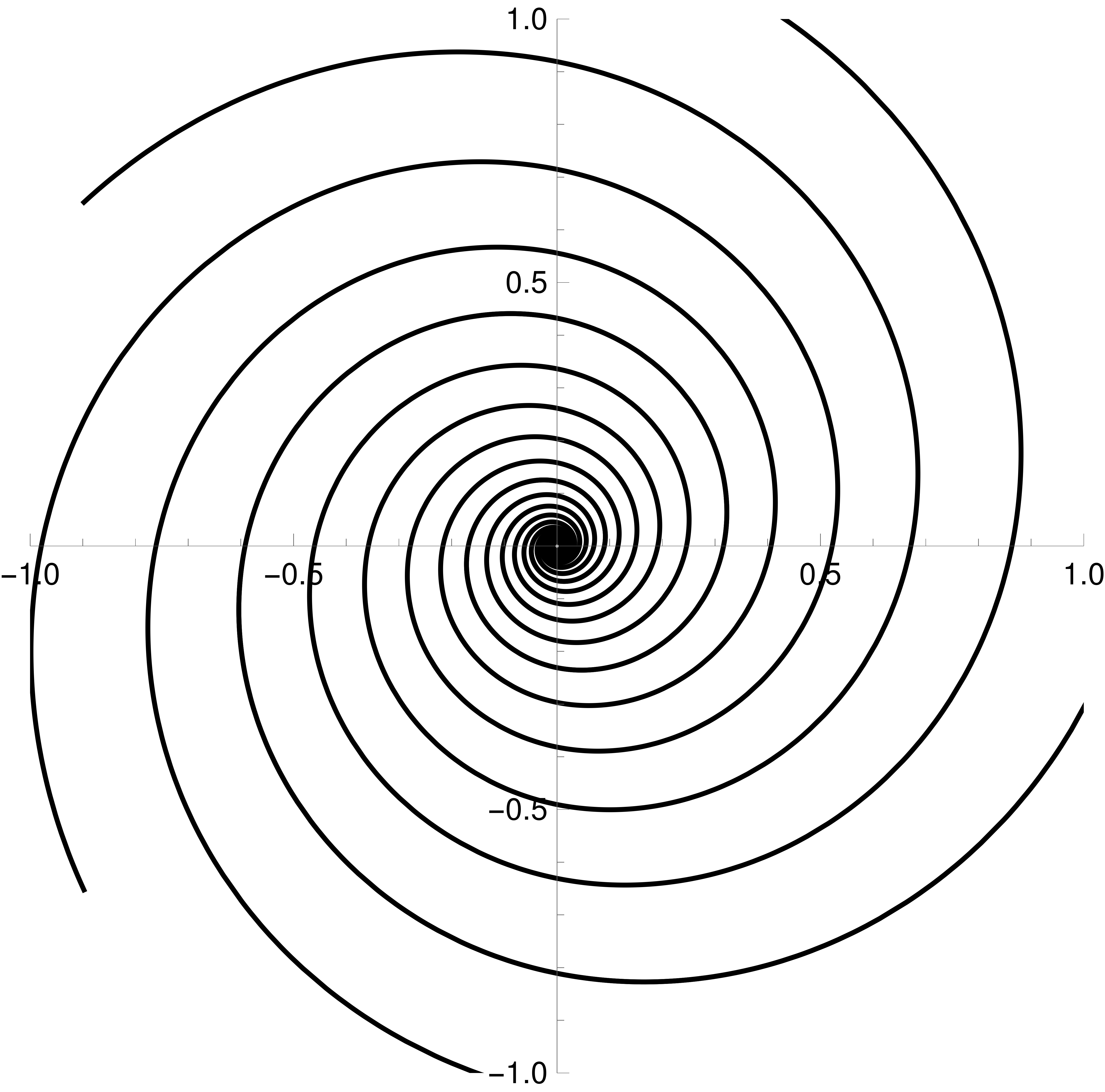}
		\caption{Integral lines ($\alpha={1}/{5}$).}
		\label{fig-lines-spiral}
	\end{minipage}
\end{figure}
A trivial way  to achieve this is to take $\Gamma_W$ as parts of some integral lines of $\flow$, hence, satisfying
\begin{align*}
	x'(t) &= -y(t) + \alpha x(t) \\
	y'(t) &= x(t) + \alpha y(t)
\end{align*}
with an arbitrary initial condition $(x(0), y(0))$. The solution formula is as follows
\begin{align*}
	x(t) &= x(0) \exx^{\alpha t}\cos{t} - y(0) \exx^{\alpha t}\sin{t}, \\
	y(t) &= y(0) \exx^{\alpha t}\cos{t} + x(0) \exx^{\alpha t}\sin{t},
\end{align*}
where the parameter $t\in\R$---see Figures 8 and 9. Connecting the integral lines together through the $\Gamma$ portion  of the boundary, we can create the following two domains, as  shown in Figures 10 and 11.

\begin{figure}[!htb]
	\centering
	\begin{minipage}[b]{0.4\textwidth}
		\includegraphics[width=\textwidth]{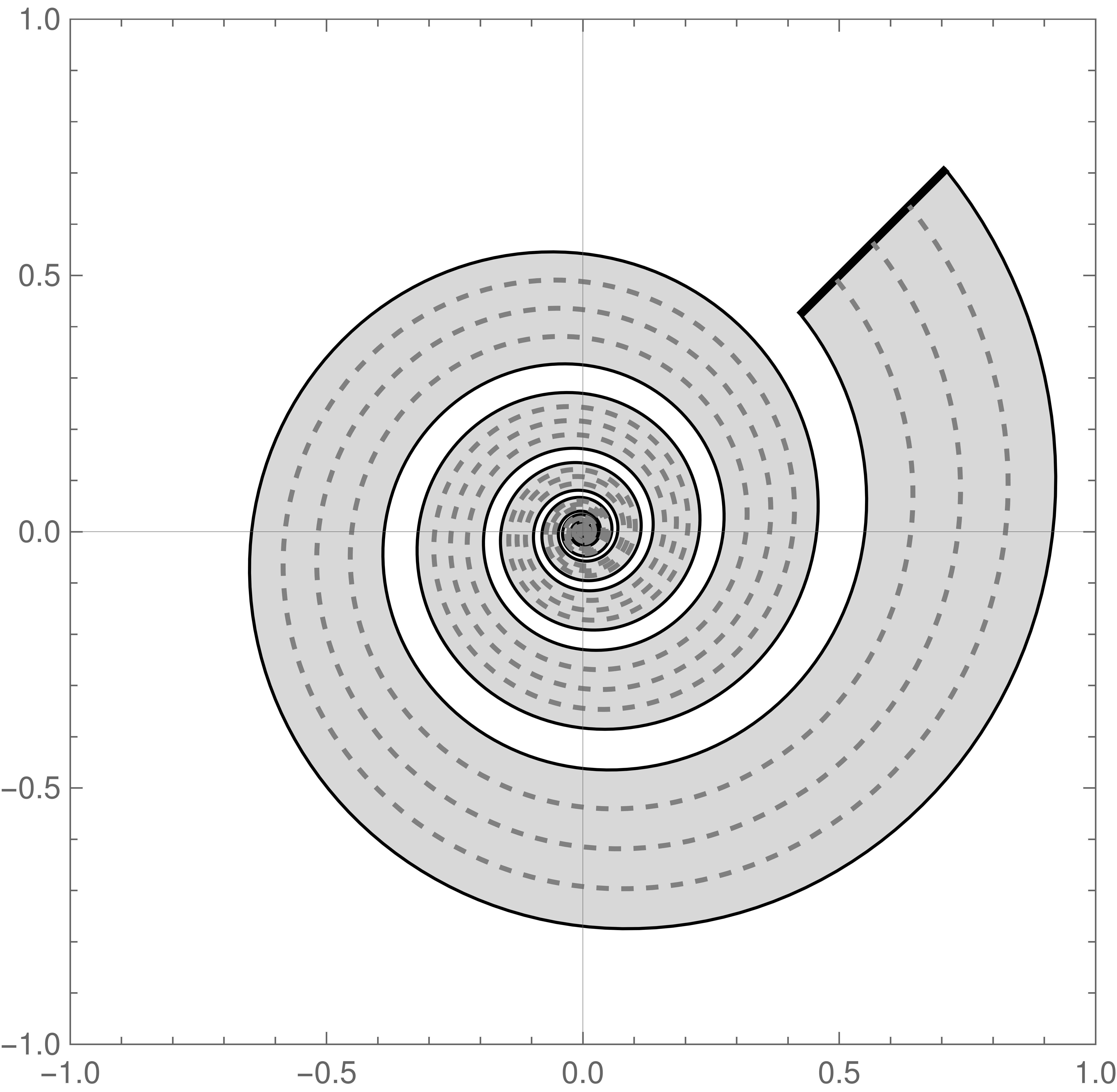}
		\caption{Spiral Domain ($\alpha={1}/{9}$).}
	\end{minipage}
	\hspace{1cm}
	\begin{minipage}[b]{0.4\textwidth}
		\includegraphics[width=\textwidth]{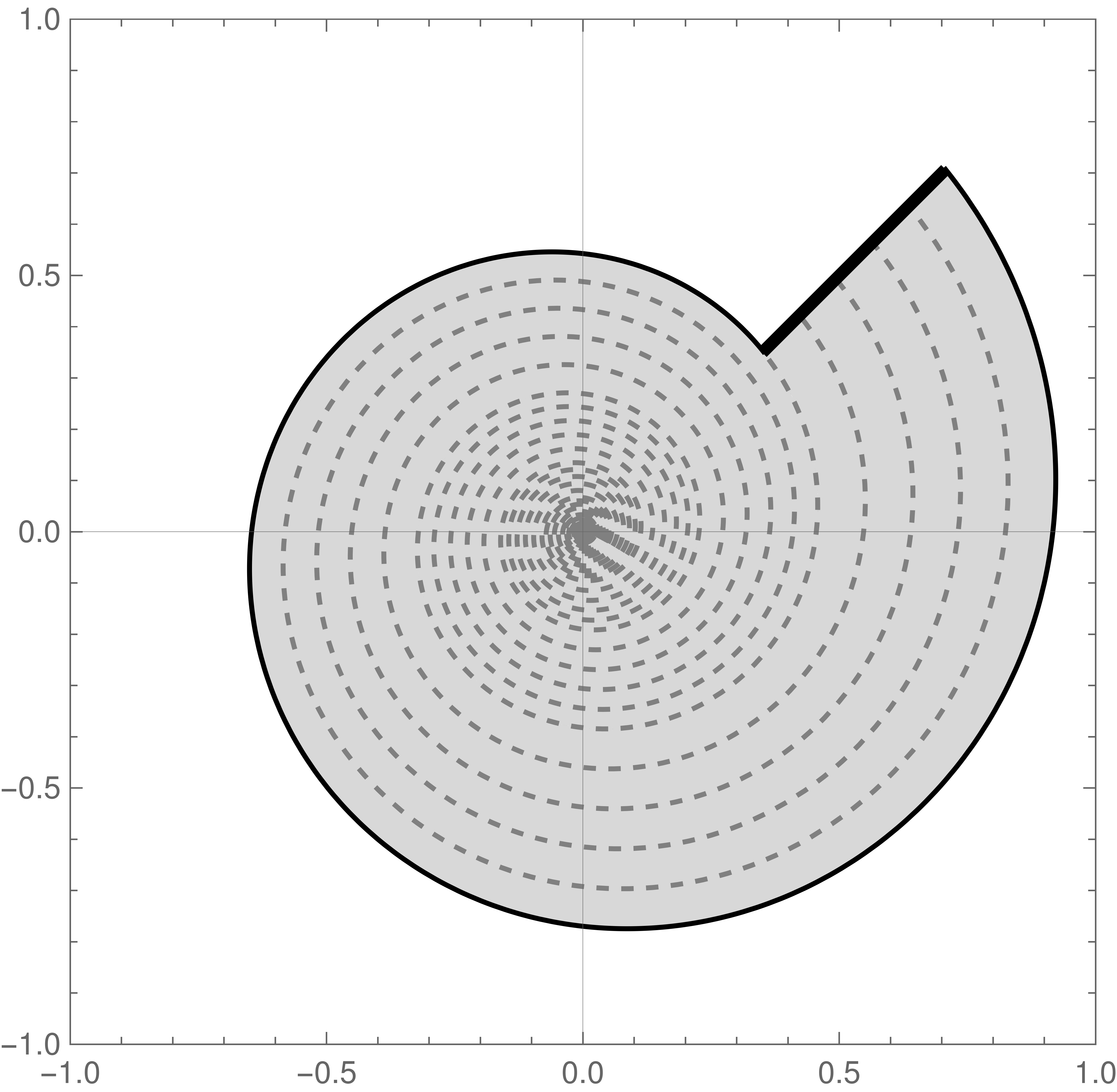}
		\caption{Shell Domain ($\alpha={1}/{9}$).}
	\end{minipage}
\end{figure}

\subsection{Example 2: Arc (cf. Figure~6)}\label{appendix:Arc-Construction}
In the previous setting, with $\alpha=0$, we cannot directly invoke the rotation vector field $\flow(x, y) = (-y, x)^T$, as all  terms providing useful information vanish. Hence, we look for a scalar multiplier $\Phi: \R^2 \to \R$, such that the renormalized field satisfies $\diverg(\Phi\flow) = 1$. In polar coordinates it is easy to see that $\Phi$ only depends on $\theta$. Hence in the upper half plane ($y > 0$) we can take
\begin{equation}
	\Phi(x, y) = \ArcCos\left(\frac{x}{\sqrt{x^2+y^2}}\right) > 0.
\end{equation}
Then 
\begin{align*}
	\xi^T \grad\big[\Phi(x,y)\flow(x,y)\big] \xi &= 
	\left(
	\frac{y}{\sqrt{x^2+y^2}}\; \xi_1 - \frac{x}{\sqrt{x^2+y^2}} \; \xi_2
	\right)^2
	=\frac{\abs{\xi \cdot \flow}^2}{x^2+y^2},
\end{align*} 
and because $\sqrt{x^2+y^2}$ is uniformly bounded away from zero, this yields the desired estimate. The arc domain is again created in such a way that the boundary conditions for the flow satisfy the sign condition on $\Gamma_W$ (see Figure 6). Thus the domain in Figure 6 satisfies the {\em Graph Optics Condition} \ref{def:GGC} but not the {\em Generalized Optics Condition} \ref{def:GOC}.

\section*{Appendix B: Very Weak Solutions}
In this section we present a definition of very weak solutions and provide the proof of Theorem \ref{th:main1-general}. The proof will directly follow from Theorem \ref{th:main1} and the simple observation that a time-derivative of a time-periodic solution is again a time-periodic solution to the time-differentiated problem. Thus we will prove the theorem by taking primitives of the right-hand side and using the invariances introduced in Section \ref{Sec:invariances}.

{  We start by introducing dual spaces of spaces of time-periodic functions. Let $X$ be a Banach space of functions. For $k\in \mathbb{N}$ we
define $f \in H^{-k}_{\sharp}(0,T;X')\equiv\left (H^k_{\sharp}(0,T;X)\right )'$, i.e., 
\[
H^{-k}_{\sharp}(0,T;X')\equiv\left \{f: H^{k}_{\sharp}(0,T;X)\to \mathbb{R}:f \text{ is bounded and linear}\right \}.
\]
Note that this implies that one can define the mean value for such functionals, 
$$
\skp{\mean{f}}{\phi}_{X'\times X}\equiv\frac{1}{T}\skp{f}{1\otimes \phi}_{H^{-k}_\sharp(0,T;X')\times H^k_{\sharp}(0,T;X)},\quad \phi\in X.
$$
Furthermore, we can prove the following representation result.

\begin{lemma}
\label{lem:dual}
Let $k\in\N$ and $f\in H^{-k}_{\sharp}(0,T;X')$. Then for any $m\in\N$ there exists $F \in H^m_{\sharp}(0,T;X')$, such that $\partial_t^{k+m}F=f-\mean{f}$ in the sense of distributions.
\end{lemma}
\begin{proof}
We introduce the notation $\skp{f}{g}=\int_0^T\skp{f(t)}{g(t)}_{X'\times X}$. By a standard argument used for the representation of functionals in Bochner spaces, and by the Riesz representation theorem, there exist $f_0,...,f_n\in L^2(0,T;X')$ such that
\[
\skp{f}{\phi}=\sum_{j=0}^k \skp{f_j}{\partial^{(j)}_t\phi}.
\]
Since $f_i$, $i\in\{1,...,k\}$ are acting on time derivatives of periodic functions, which are mean-free, we have that $\mean{f_i}=0$. Therefore $\mean{f}=\mean{f_0}$. The goal is now to prove that $f_i$ can be written as the derivative of time-periodic functions.

Since the primitive of a function is time-periodic if and only if it has zero mean, we need to subtract the mean value from $f_0$. Therefore, we define:
$$
F_0^1(t)\equiv\int_0^t (f_0-\mean{f_0})=\int_0^t f_0(s)ds-t\mean{f}\in H^1_{\sharp}(0,T;X'),
$$ 
and $\frac{d}{dt}F_0^1=f_0-\mean{f}$. However, this procedure can be iterated. We define $F_0^2\equiv\int_0^t (F_0^1-\mean{F_0^1})$, and it holds that
\[
\skp{F_0^2}{\phi''}  = \skp{f_0}{\phi} \text{ and }F_0^2\in H^2_{\sharp}(0,T;X').
\]
This can be  further iterated up to $F_0^{m+k}$. Furthermore, the same procedure can be applied to all functions $f_j$, for $j \in \{1, \dots, k-1\}$, defining $F_j^{m+k-j}$. Thus, $F \equiv \sum_{j=0}^{k-1} F_j^{m+k-j}+f_k \in H^m_{\sharp}(0,T;X')$ satisfies the desired property.
 \end{proof}
 }

\begin{definition}\label{VeryWeakDef}
	Let $k\in\N$. Assume that the forcing functions satisfy $f \in H^{-k}_{\sharp}(0,T;H^1_{\Gamma_H}(\Omega_H)')$ and $g \in H^{-k}_{\sharp}(0,T;L^2(\Omega_W))$. We say that $$(u,w)\in H^{-k}_{\sharp}(0,T;H^1_{\Gamma_H}(\Omega_H))\times \left [H^{-k}_{\sharp}(0,T;H^1_{\Gamma_W}(\Omega_W))\cap H^{-k+1}_{\sharp}(0,T;L^2(\Omega_W))\right]$$ is a very weak solution to \eqref{sys2}--\eqref{sys2e}, if there exist functions $F \in L^2(0,T;H^1_{\Gamma_H}(\Omega_H)')$, $G \in L^2(0,T;L^2(\Omega_W))$ and $(U,W)\in L^2(0,T;H^1_{\Gamma_H}(\Omega_H))\times \left [L^2(0,T;H^1_{\Gamma_W}(\Omega_W))\cap H^1(0,T;L^2(\Omega_W))\right]$, such that 
	$$
	\partial_t^kF={  f-\mean{f}},\;\partial_t^kG={  g-\mean{g}},\quad
	\partial_t^k(U,W)=(u-u_s,w-w_s),
	$$
	where $(U,W)$ is a finite-energy weak solution in the sense of Definition \ref{weaksol} with right-hand sides $(F,G)$ {  and $\left(u_s,w_s\right)\in H^1_0(\Omega_H)\times H^1_{\Gamma_W}(\Omega_W)$ are defined by solving stationary problems with the right-hand sides $\mean{f}$ and $\mean{g}$ as in \eqref{MeanValueHeatEq} and \eqref{meanw}.}
\end{definition}
{  The above definition implies the following canonical weak formulation, for a general distribution, using the characterization in Lemma~\ref{lem:dual}
\begin{align*}
&-\int_0^T\int_{\Omega_H}[U \partial_t^{(k+1)}\phi +\nabla U \cdot \nabla \partial_t^{(k)}\phi] + T\int_{\Omega_H}\nabla u_s\cdot\nabla\mean{\phi}
\\
&+ \int_0^T\int_{\Omega_W}[-\partial_t W\partial_t^{(k+1)}\psi + \nabla W \cdot \nabla \partial_t^{(k)}\psi] + T\int_{\Omega_H}\nabla w_s\cdot\nabla\mean{\psi}
= \int_0^T\langle f,\phi \rangle +\int_0^T\langle g ,\psi\rangle,
\end{align*}
for all $(\phi,\psi)\in H^{k+1}_{\sharp}(0,T;H^1(\Omega_W))\times H^{k+1}_{\sharp}(0,T;H^1(\Omega_H))$, with $\phi=\psi$ on $\Gamma$, $\phi=0$ on $\Gamma_H$ and $\psi=0$ on $\Gamma_W$, a.e.\ in time.
}
\begin{proof}[Proof of Theorem \ref{th:main1-general}] 
Let $f \in H^{-k+3}(0,T;H^1_{\Gamma_H}(\Omega_H)')$ and $g \in H^{-k+6}(0,T;L^2(\Omega_W))$. {  Since $\mean{f}\in H^1_{\Gamma_H}(\Omega_H)'$ and $\mean{g}\in L^2(\Omega_W)$ are well defined, we can define $u_s$ and $w_s$ as the decoupled steady part, see Section \ref{Sec:invariances}. We use Lemma \ref{lem:dual} with $m=3$ and $m=6$ to find functions $F \in H^3_{\sharp}(0,T;H^1_{\Gamma_H}(\Omega_H)')$ and $G \in H^6_{\sharp}(0,T;L^2(\Omega_W))$, such that $\partial_t^kF=f-\mean{f}$ and $\partial_t^kG=g-\mean{g}$. Functions $F$ and $G$ satisfy the assumptions of Theorem \ref{th:main1}, and therefore there exists a weak solution $(\tilde{U},\tilde{W})$ in the sense of Definition \ref{weaksol} with right-hand side $(F,G)$.} 
Hence, the very weak solution in the sense of Definition \ref{VeryWeakDef} is given by 
$$
(u,w)=\partial_t^{k}(U,W)+(u_s,w_s).
$$
\end{proof}

Theorem \ref{th:main2-general} is proved completely analogously by using Theorem \ref{th:main2}.

\footnotesize

\end{document}